\newtheorem{theorem}{Theorem}[section]
\newtheorem{prop}[theorem]{Proposition}
\newtheorem{lemma}[theorem]{Lemma}
\newtheorem{corollary}[theorem]{Corollary}
\newtheorem{remark}[theorem]{Remark}
\newcommand{\R}{{\mathbb R}}
\newcommand{\N}{{\mathbb N}}
\newcommand{\E}{{\mathbb E}}
\newcommand{\Var}{\mathrm{Var}}
\newcommand{\Cov}{\mathrm{Cov}}
\newcommand{\cN}{{\mathcal N}}
\renewcommand{\P}{{\mathbb P}}
\newcommand{\PRp}{{\mathfrak{P}(\R_+)}}
\newcommand{\PLRp}[1]{{\mathfrak{P}}_{#1}(\R_+)}
\newcommand{\Cpol}{{\mathcal C}^\infty_{\textup{pol}}(\R_+)}
\newcommand{\CpolK}[1]{{\mathcal C}^{#1}_{\textup{pol}}(\R_+)}
\newcommand{\CpolKL}[2]{{\mathcal C}^{#1,#2}_{\textup{pol}}(\R_+)}
\def\bbar{\overline}
\def\cL{\mathcal L}
\def\sjzL{\sum_{j= 0}^L}
\def\sioj{\sum_{i= 1}^j}
\def\i{\textup{I}}
\def\d{\textup{D}}
\def\blue{}
\title[High order approximations for the CIR process using random grids]{High order approximations of  the Cox-Ingersoll-Ross process semigroup using random grids}
\date{\today}
\author{Aurélien Alfonsi and Edoardo Lombardo}
\address{Aurélien Alfonsi, CERMICS, Ecole des Ponts, Marne-la-Vall\'ee, France. MathRisk, Inria, Paris, France.}
\email{aurelien.alfonsi@enpc.fr}
\address{Edoardo Lombardo, CERMICS, Ecole des Ponts, Marne-la-Vall\'ee, France. MathRisk, Inria, Paris, France. Universit\`a degli Studi di Roma Tor Vergata, Rome, Italy. }
\email{edoardo.lombardo@enpc.fr}
\thanks{This work benefited from the support of the ``chaire Risques financiers'', Fondation du Risque. Edoardo Lombardo is partially supported by the MIUR Excellence Department Project MatMod@TOV awarded to the Department of Mathematics, University of Rome Tor Vergata.}
\subjclass[2010]{60H35, 91G60, 65C30, G5C05}
\keywords{Weak approximation schemes, random grids, Cox-Ingersoll-Ross model, Heston model}
\begin{document}

\begin{abstract}
  We present new high order approximations schemes for the Cox-Ingersoll-Ross (CIR) process that are obtained by using a recent technique developed by Alfonsi and Bally (2021) for the approximation of semigroups. The idea consists in using a suitable combination of discretization schemes calculated on different random grids to increase the order of convergence. This technique coupled with the second order scheme proposed by Alfonsi (2010) for the CIR leads to weak approximations of order $2k$, for all $k\in\N^*$. Despite the singularity of the square-root volatility coefficient, we show rigorously this order of convergence under some restrictions on the volatility parameters. We illustrate numerically the convergence of these approximations for the CIR process and for the Heston stochastic volatility model and show the computational time gain they give. 
\end{abstract}

\maketitle


\section{Introduction}

The present paper develops approximations, of any order, of the semigroup $P_t f(x):=\E[f(X^x_t)]$ associated to the following Stochastic Differential Equation (SDE) known as the Cox-Ingersoll-Ross (CIR) process
\begin{equation}\label{CIR_SDE}
  X^x_t =x+ \int_0^t (a-kX^x_s)ds+ \int_0^t \sigma\sqrt{X^x_s}dW_s, \quad t\ge 0,
\end{equation}
where $W$ is a Brownian motion, $x,a\ge 0$, $k\in \R$ and $\sigma>0$. Let us recall that the process~\eqref{CIR_SDE} is   nonnegative and  the semigroup $(P_t)_{t\ge 0}$ is well defined on the space of functions $f:\R \to \R$ with polynomial growth. The diffusion~\eqref{CIR_SDE} is widely used in financial mathematics, in particular because of its simple parametrisation and the affine property that enables to use numerical methods based on Fourier techniques. We mention here the Cox-Ingersoll-Ross model~\cite{CIR} for the short interest rate and the Heston stochastic volatility model~\cite{Heston}, that have been followed by many other ones.   Developing efficient numerical methods for the process~\eqref{CIR_SDE} is thus of practical importance. 

To deal with the approximation of SDE's semigroups, a common approach is to consider stochastic approximations and the most standard one is the Euler-Maruyama scheme. The error between the approximated semigroup and the exact one is called the weak error, as opposed to the strong error that quantifies the error "omega by omega" on the probability space. 
The seminal work of Talay and Tubaro~\cite{TT} shows, under regularity assumptions on the SDE coefficients, that the weak error given by the Euler-Maruyama scheme is of order one, i.e. is proportional to the time step. They also obtain an error expansion that enables to use Richardson-Romberg extrapolations as developed by Pagès~\cite{Pages}. Higher order schemes for SDEs and related extrapolations have been proposed by Kusuoka~\cite{Kusuoka}, Ninomiya and Victoir~\cite{NV}, Ninomiya and Ninomiya~\cite{NiNi} and Oshima et al.~\cite{OTV} to mention a few. Recently, Alfonsi and Bally~\cite{AB} have given a method to construct weak approximation of general semigroups of any order by using random time grids. 

These general results on weak approximation of SDEs do not apply to the CIR~\eqref{CIR_SDE} process. This is due to the diffusion coefficient, namely the singularity of the square-root at the origin. Besides this, classical schemes such as the Euler-Maruyama scheme are not well-defined for~\eqref{CIR_SDE}, and one has to work with dedicated schemes.   Under some restrictions on the parameters, the weak convergence of order one for some discretization schemes of the CIR process has been obtained by Alfonsi~\cite{AA_MCMA}, Bossy and Diop~\cite{BoDi}, and more recently by Briani et al.~\cite{BrCaTe} who also study the weak convergence of a semigroup approximation for the Heston model. \blue{We also mention the earlier work by Altmayer and Neuenkirch~\cite{AlNe} that precisely studies the weak error for the Heston model. } 
 Adapting ideas from Ninomiya and Victoir~\cite{NV} who developed a second order scheme for general SDEs, Alfonsi~\cite{AA_MCOM} has introduced second order and third order schemes for the CIR and proved their weak order of convergence, without any restriction on the parameters.

 The goal of the present paper is to boost the second order scheme developed in~\cite{AA_MCOM} and get approximations of any order. To do so, we rely on the method developed recently by Alfonsi and Bally~\cite{AB} to construct approximation of semigroups of any order. Roughly speaking, this method allows to get, from an elementary weak approximation scheme of order~$\alpha>0$, approximation schemes of any order by computing the elementary scheme on appropriate random grids. The method is illustrated in~\cite{AB} on the case of the Euler-Maruyama scheme for SDEs, under regularity assumptions on the coefficients that do not hold for the CIR process~\eqref{CIR_SDE}. This method is presented briefly in Section~\ref{Sec_nutshell}. It relies on an appropriate choice of a function space endowed with a family of seminorms. Section~\ref{Sec_O2CIR} then presents the second order scheme that is used as an elementary scheme to get higher order approximation. It states in Theorem~\ref{thm_main} the main result of this paper: we prove, when $\sigma^2\le 4a$, that we get weak approximations of any orders for smooth test functions~$f$ with derivatives having at most a polynomial growth.  
 Section~\ref{Sec_pol_fct} illustrates the boosting method when considering the space of polynomials function with their usual norm. In this simple case, proofs are quite elementary so that the method can be followed easily. Section~\ref{Sec_main} is more involved: it  first defines the appropriate family of seminorms on the space of smooth functions with derivative of polynomial growth and then proves Theorem~\ref{thm_main}. Last, we illustrate in Section~\ref{Simulations} the convergence of the high order approximations for different parameter sets. It validates our theoretical results and shows important computational gains given by the new approximations. We also test the method on the Heston model and obtain similar convincing results.

\section{High order schemes with random grids: the method in a nutshell}\label{Sec_nutshell}

In this paragraph, we recall briefly the method developed by Alfonsi and Bally in~\cite{AB} to construct approximations of any order from a family of approximation schemes. We consider $F$ a vector space endowed with a family of seminorms $(\| \|_k)_{k\in\N}$ such that $\|f\|_k\leq \|f\|_{k+1}$. We consider a time horizon $T>0$ and set, for $n\in \N^*$ and $l\in \N$,
\begin{equation}\label{def_hl}
  h_l=\frac{T}{n^l}.
\end{equation}
To achieve this goal, we consider a family of linear operators $(Q_l)_{l \in \N}$ \blue{on $F$.  For $l\in \N$,  we note $Q^{[0]}_l=I$ the identity operator and, for $j\in \N^*$, $Q^{[j]}_l = Q^{[j-1]}_l Q_l$ the operator obtained by composition. We} suppose that the two following conditions are satisfied. The first quantifies how $Q_l$ approximates~$P_{h_l}$:
\blue{\begin{equation}\tag{$\bbar{H_1}$}\label{H1_bar}
  \begin{array}{c}\text{there exists } \alpha>0 \text{ such that for any }  l,k\in\N, \text{ there exists } C>0,\text{ such that }\\ \|(P_{h_l}-Q_l)f\|_k \leq C\|f\|_{\psi_{Q}(k)} h_l^{1+\alpha} \text{ for all } f\in F,
  \end{array}
\end{equation}}
where $\psi_Q: \N \to \N$ is a function\footnote{Note that in~\cite{AB}, it is taken $\psi_Q(k)=k+\beta$ for some $\beta \in \N$, but is can be easily generalized to any function~$\psi_Q$. In this paper, we will work with a doubly indexed norm and take $\psi_Q(m,L)=(2(m+3),L-1)$.}. The second one is a uniform bound with respect to all the seminorms:
\blue{\begin{equation}\tag{$\bbar{H_2}$}\label{H2_bar}
  \begin{array}{c}
    \text{for all } l,k\in \N, \text{ there exists } C>0 \text{ such that } \\ \max_{0\leq j\leq n^l}\|Q^{[j]}_l f\|_k + \sup_{t\leq T}\|P_t f\|_k\leq C\|f\|_k  \text{ for all } f\in F.
  \end{array}
\end{equation}}
Then, for any $\nu \in \N^*$, Alfonsi and Bally~\cite{AB} show how one can construct, by mixing the operators $Q_l$,  a linear operator $\hat{P}^{\nu,n}_T$ for which there exists $C>0$ and $k\in \N$  such that
\begin{equation}\label{const_Pnu}
   \|P_Tf-\hat{P}^{\nu,n}f\|_0\leq C\|f\|_k n^{-\nu \alpha} \text{ for all } f\in F.
\end{equation}

Let us explain how it works for $\nu=1$ and $\nu=2$. For $\nu=1$, we mainly repeat the proof of Talay and Tubaro~\cite{TT} for the weak error of the Euler scheme. From the semigroup property, we have
\begin{equation}\label{dev_1}P_Tf-Q^{[n]}_1f=P_{nh_1}f-Q^{[n]}_1f=\sum_{k=0} ^{n-1}P_{(n-(k+1))h_1}[P_{h_1}-Q_1]Q_1^{[k]}f.
\end{equation}
We get by using~\eqref{H2_bar}, then~\eqref{H1_bar} and then again~\eqref{H2_bar}
\begin{align}
  \|P_Tf-Q^{[n]}_1f\|_0 & \le\sum_{k=0} ^{n-1} C \|[P_{h_1}-Q_1]Q_1^{[k]}f\|_0 \le \sum_{k=0} ^{n-1} C \|Q_1^{[k]}f\|_{\psi_Q(0)}h_1^{1+\alpha} \notag \\
                        & \le  C \|f\|_{\psi_Q(0)} n(T/n)^{1+\alpha}= C \|f\|_{\psi_Q(0)} T^{1+\alpha}n^{-\alpha}.\label{nu_egal_1}
\end{align}
Here, and through the paper, $C$ denotes a positive constant that may change from one line to another. So, $\hat{P}^{1,n}=Q^{[n]}_1$ satisfies~\eqref{const_Pnu}  with $\nu=1$, $k=\psi_Q(0)$. The approximation scheme simply consists in using $n$ times the scheme~$Q_1$, which can be seen as a scheme on the regular time grid with time step~$h_1$.

We now present the approximation scheme~\eqref{const_Pnu} for $\nu=2$. To do so, we use again~\eqref{dev_1} to get $P_{(n-(k+1))h_1}-Q_1^{[n-(k+1)]}=\sum_{k'=0}^{n-(k+2)}P_{(n-(k+k'+2))h_1}[P_{h_1}-Q_1]Q_1^{[k']}$ and then expand further~\eqref{dev_1}:
\begin{align}
  P_Tf-Q^{[n]}_1f            & =\sum_{k=0}^{n-1}Q_1^{[n-(k+1)]}[P_{h_1}-Q_1]Q_1^{[k]}f + R_2^{h_1}(n)f, \label{dev_2}                          \\
  \text{ with } R_2^{h_1}(n) & =\sum_{k=0}^{n-1}\sum_{k'=0}^{n-(k+2)}P_{(n-(k+k'+2))h_1}[P_{h_1}-Q_1] Q_1^{[k']}[P_{h_1}-Q_1] Q_1^{[k]} \nonumber
\end{align}
Using~\eqref{H1_bar} three times  and~\eqref{H2_bar} twice, we obtain $$\| R_2^{h_1}(n)f\|_0 \le C \| f\|_{\psi_Q(\psi_Q(0))}\frac{n(n-1)}2 h_1^{2(1+\alpha)}\le C \| f\|_{\psi_Q(\psi_Q(0))}\frac{T^{2(1+\alpha)}}{2} n^{-2\alpha}.$$
Thus, $Q^{[n]}_1 +\sum_{k=0}^{n-1}Q_1^{[n-(k+1)]}[P_{h_1}-Q_1]Q_1^{[k]}f $ is an approximation of order $2\alpha$, but it still involves the semigroup through $P_{h_1}$. To get an approximation that is obtained only with the operators $Q_l$, we use again~\eqref{dev_1} with time step~$h_2$ and final time~$h_1=nh_2$:
$$P_{h_1}f-Q_2^{[n]}f=\sum_{k=0} ^{n-1}P_{(n-(k+1))h_2}[P_{h_2}-Q_2]Q_2^{[k]}f. $$
We have $\|P_{h_1}f-Q_2^{[n]}f\|_0\le C \|f\|_{\psi_Q(0)}n h_2^{1+\alpha}$ by using again \eqref{H1_bar} and~\eqref{H2_bar}. We get from~\eqref{dev_2}
\begin{equation}\label{devt_erreur}
  P_Tf-Q^{[n]}_1f=\sum_{k=0}^{n-1}Q_1^{[n-(k+1)]}[Q_2^{[n]}-Q_1]Q_1^{[k]}f+ \sum_{k=0}^{n-1}Q_1^{[n-(k+1)]}[P_{h_1}-Q_2^{[n]}]Q_1^{[k]}f+ R_2^{h_1}(n)f,
\end{equation}
with  $\|\sum_{k=0}^{n-1}Q_1^{[n-(k+1)]}[P_{h_1}-Q_2^{[n]}]Q_1^{[k]}f\|_0\le C  \|f\|_{\psi_Q(0)}n^2 h_2^{1+\alpha}=C  \|f\|_{\psi_Q(0)}T^{1+\alpha}n^{-2\alpha}$. Therefore, the approximation
\begin{equation}\label{def_P_2}
  \hat{P}^{2,n}f:=Q^{[n]}_1f+\sum_{k=0}^{n-1}Q_1^{[n-(k+1)]}[Q_2^{[n]}-Q_1]Q_1^{[k]}f
\end{equation}
satisfies~\eqref{const_Pnu} with $\nu=2$ and is obtained only with the approximating operators~$Q_l$.  The first term~$Q_1^{[n]}$ corresponds to apply the scheme~$Q_1$ on the regular time grid with time step~$h_1$, while each term  $Q_1^{[n-(k+1)]}[Q_2^{[n]}-Q_1]Q_1^{[k]}$ is the difference between this scheme and the one where $Q_2^{[n]}$ is used instead of $Q_1$ for the $(k+1)$-th time step. This amounts to refine this time step and split it into $n$ time steps of size~$h_2$, and to use the scheme $Q_2$ on this time grid.

In practice, it is inefficient to calculate one by one the terms in $\hat{P}^{2,n}f$. In fact, each term requires a number of calculations that is proportional to~$n$, and the overall computation cost would be of the same order as~$n^2$. Since  the convergence is in $O(n^{-2\alpha})$ it would not be better asymptotically than using $\hat{P}^{1,n^2}f$. To avoid this, we use randomization. We sample a uniform random variable~$\kappa$ on $\{0,\dots,n-1\}$ and calculate $n\E[Q_1^{[n-(\kappa+1)]}[Q_2^{[n]}-Q_1]Q_1^{[\kappa]}f]=\sum_{k=0}^{n-1}Q_1^{[n-(k+1)]}[Q_2^{[n]}-Q_1]Q_1^{[k]}f$. This amounts to consider the regular time grid with time step~$h_1$, to select randomly one time step and to refine it, and then to compute the difference between the approximations on the (random) refined time-grid and on the regular time-grid. \blue{To be more precise, let us consider the case of an approximation scheme defined by $\varphi(x,h,V)$ where $\varphi$ is a measurable function, $x$ is the starting point, $h$ the time step and $V$  a random variable. The associated operators are   
 $Q_lf(x)=\E[f(\varphi(x,h_l,V))]$, $l\in \N$. For a time-grid $\Pi=\{0=t_0<\dots<t_n=T\}$, we define $X^\Pi_0(x)=x$ and $X^\Pi_{t_{i}}(x)=\varphi(X^\Pi_{t_{i-1}}(x),t_i-t_{i-1},V_i)$ for $1\le i\le n$, where $(V_i)_{i\ge 1}$ is an i.i.d. sequence. Thus, we get on the uniform time grid $\Pi^0=\{ kT/n, 0\le k\le n \}$ $\E[f(X^{\Pi^0}_{T}(x))]=Q_1^{[n]}f(x)$. By taking the random grid  $\Pi^1 = \Pi^0 \cup  \{ \kappa T/n + k'T/n^2 , 1 \leq k' \leq n-1 \}$, where $\kappa$ is an independent uniform random variable on $\{0,\ldots,n-1\}$, we also get $\E[f(X^{\Pi^1}_{T}(x))]=\E[Q_1^{[n-(\kappa+1)]}[Q_2^{[n]}-Q_1]Q_1^{[\kappa]}f(x)]$, and then $\E[n(f(X^{\Pi^1}_{T}(x))-f(X^{\Pi^0}_{T}(x)))]=\sum_{k=0}^{n-1}Q_1^{[n-(k+1)]}[Q_2^{[n]}-Q_1]Q_1^{[k]}f(x)$. When using a Monte-Carlo estimator of this identity, one has thus to draw as many $\kappa$'s as trajectories. 
}

We have presented here how to construct~$\hat{P}^{\nu,n}$ for $\nu=1$ and $\nu=2$, and it is possible by repeating the same arguments to construct by induction approximations of any order. Unfortunately, the induction is quite involved. It is fully described in~\cite[Theorem 3.10]{AB}. We do not reproduce it in this paper because it would require much more notation, and we will mainly use the scheme~\eqref{def_P_2}. Here, we give in addition the explicit form of   $\hat{P}^{3,n}$, $n\ge 2$:
\begin{align}\label{def_P_3}
  \hat{P}^{3,n}f:= & \hat{P}^{2,n}+  \sum_{0\le k_1<k_2<n}^{n-1}Q_1^{[n-(k_2+1)]}[Q_2^{[n]}-Q_1]Q_1^{[k_2-k_1-1]}[Q_2^{[n]}-Q_1]Q_1^{[k_1]}f     \\
                   & +\sum_{k=0}^{n-1}Q_1^{[n-(k+1)]}\left[\sum_{k'=0}^{n-1}Q_2^{[n-(k'+1)]}[Q_3^{[n]}-Q_2]Q_2^{[k']} \right] Q_1^{[k]}f. \notag
\end{align}
By similar arguments, it satisfies~\eqref{const_Pnu} with $\nu=3$.

\section{Second order schemes for the CIR process and main result}\label{Sec_O2CIR}

In this section, we focus on the approximation of the semigroup of the CIR process $P_tf(x)=\E[f(X^x_t)]$, where
$$X_t^x=x + \int_0^t(a-kX^x_s)ds+ \sigma \int_0^t\sqrt{X^x_s}dW_s, \ t\ge 0.$$
Equation~\eqref{nu_egal_1} shows that, necessarily, approximating operators $Q_l$ that satisfy both~\eqref{H1_bar} and~\eqref{H2_bar} lead to a weak error of order~$\alpha$. Therefore, we are naturally interested in approximation schemes of the CIR for which we know the rate of convergence~$\alpha$ for the weak error. \cite[Proposition 4.2]{AA_MCMA} gives a rate $\alpha=1$ for a family of approximation schemes that are basically obtained as a correction of the Euler scheme. Ninomiya and Victoir~\cite{NV} have developed a generic method to construct second order schemes ($\alpha=2$) for Stochastic Differential Equations with smooth coefficients. Applied to the Cox-Ingersoll-Ross process, their method leads to the following approximation scheme
\begin{equation}\label{NV_scheme}
  \hat{X}^x_t=\varphi(x,t,\sqrt{t}N),
\end{equation}
where  $N\sim \mathcal{N}(0,1)$ and  $\varphi:\R_+\times\R_+\times\R\to \R_+$ is defined by
\begin{align}\label{def_varphi}
  \varphi(x,t,w) & = e^{-kt/2}\left(\sqrt{(a-\sigma^2/4)\psi_k(t/2)+e^{-kt/2}x}+\sigma w/2 \right)^2 +(a-\sigma^2/4)\psi_k(t/2) \\
                 & =X_0(t/2,X_1(w,X_0(t/2,x))), \text{ with} \notag
\end{align}
\begin{align}\label{def_X0_and_psi_k}
  X_0(t,x) & = e^{-kt}x+\psi_k(t)(a-\sigma^2/4), \quad \psi_k(t)=\frac{1-e^{-kt}}k, \\ X_1(t,x)&=(\sqrt{x}+ t\sigma/2)^2, \label{def_X1}
\end{align}
with the convention that $\psi_0(t)=t$. This scheme corresponds to approximate $P_tf(x)$ by $\hat{P}_tf(x)=\E[f( \hat{X}^x_t)]$ for $x,t\ge 0$, and then to set $Q_l=\hat{P}_{h_l}$. Its construction comes from the splitting of the infinitesimal generator of the CIR process
\begin{equation}\label{def_LCIR}
  \cL f(x)=(a-kx)f'(x)+\frac 12 \sigma^2 x f''(x),\ f \in \mathcal{C}^2, x\ge 0,
\end{equation}
as $\cL=V_0+\frac 12 V_1^2$ with
\begin{equation}\label{def_V0_V1}
  V_0f(x)=\left(a-\frac{\sigma^2}4 -kx\right)f'(x) \text{ and } V_1f(x)=\sigma \sqrt{x} f'(x).
\end{equation}
The function $t\mapsto X_0(t,x)$ is the solution of the ODE $X_0'(t,x)=a-\frac{\sigma^2}4 -kX_0(t,x)$ such that $X_0(0,x)=x$, while $X_1(W_t,x)$ solves the SDE associated to the infinitesimal generator $V_1^2/2$.

The scheme~\eqref{NV_scheme} is well defined for $\sigma^2\le 4a$. Instead, for $\sigma^2> 4a$, it is not well defined for any $x\ge 0$ since the argument in the square-root is negative when $x$ is close to zero. To correct this, Alfonsi~\cite{AA_MCOM} has proposed the following scheme
\begin{align}\label{Alfonsi_scheme}
  \hat{X}^x_t =  (\mathds{1}_{x\geq K^Y_2(t)}\varphi(x,t,\sqrt{t}Y) + \mathds{1}_{x< K^Y_2(t)} \hat{X}^{x,d}_t),
\end{align}
where $Y$ is a random variable with compact support on $[-A_Y,A_Y]$ for some $A_Y>0$ such that $\E[Y^k]=\E[N^k]$ for $k\le 5$, and  $\hat{X}^{x,d}_t$ is a nonnegative random variable such that $\E[(\hat{X}^{x,d}_t)^i]=\E[(X^{x}_t)^i]$ for $i\in \{1,2\}$ and $K^Y_2(t)$ is a nonnegative threshold defined by
\begin{equation}\label{threshold_gen}
  K^Y_2(t) =\mathds{1}_{\sigma^2>4a} \left[ e^{\frac{kt}{2}}\left( (\sigma^2/4-a)\psi_k(t/2)+ \bigg( \sqrt{e^{\frac{kt}2}(\sigma^2/4-a)\psi_k(t/2)} + \frac \sigma 2 A_Y\sqrt{t} \bigg)^2 \right)\right].
\end{equation}
Note that when $\sigma^2\le 4a$, we have $K^Y_2(t)=0$ and thus $\hat{X}^x_t = \varphi(x,t,\sqrt{t}Y)$.
\blue{  In~\cite{AA_MCOM}, it is taken $Y$ such that $\P(Y=\sqrt{3})=\P(Y=-\sqrt{3})=1/6$ and $\P(Y=0)=2/3$, and a discrete  random variable 
$\hat{X}^{x,d}_t$ such that $\P(\hat{X}^{x,d}_t=\frac{1}{2 \pi(t,x)})=\pi(t,x)$, $\P(\hat{X}^{x,d}_t=\frac{1}{2 (1-\pi(t,x))})=1-\pi(t,x)$ where $\pi(t,x)=\frac{1-\sqrt{1-\E[(X^{x}_t)]^2/\E[(X^{x}_t)^2]}}{2} \in (0,1/2)$. 
}

We now restate~\cite[Theorem 2.8]{AA_MCOM} that analyses the weak error. \blue{We introduce $\CpolK{k}$, the set of $\mathcal{C}^k$ functions $f:\R \to \R_+$ such that all its derivatives have polynomial growth. More precisely, this means that for all $k'\in\{0,\dots,k\}$, there exists $C_{k'},E_{k'}\in \R_+$ such that
$$|f^{(k')}(x)|\le C_{k'}(1+x^{E_k'}), \ x\ge 0.$$
We also set $\Cpol=\cap_{k\in \N}\CpolK{k}$. }

\begin{theorem}\label{thm_MCOM}
  Let $\hat{X}^x_t$ be the scheme defined by~\eqref{NV_scheme} for $\sigma^2\le 4a$ or by~\eqref{Alfonsi_scheme} for any $\sigma>0$. 
  \blue{Then, for all $f\in \Cpol$, we have $Q_1^{[n]}f(x)-P_Tf(x)=O(1/n^2)$ where $Q_1f(x)=\E[f( \hat{X}^x_{h_1})]$.}
\end{theorem}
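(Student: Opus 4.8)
The plan is to fit this scheme into the abstract framework of Section~\ref{Sec_nutshell}: I would show that the operator $Q_1=\hat P_{h_1}$, acting on $\Cpol$ endowed with suitable polynomial-growth seminorms, satisfies the stability condition \eqref{H2_bar} and the one-step estimate \eqref{H1_bar} with rate $\alpha=2$. Once these are in place, the elementary telescoping computation leading to \eqref{nu_egal_1} (the case $\nu=1$) applies verbatim and yields
$$\|P_Tf-Q_1^{[n]}f\|_0\le C\,\|f\|_{\psi_Q(0)}\,T^{3}\,n^{-2},$$
which, evaluated at a point $x$, gives exactly $Q_1^{[n]}f(x)-P_Tf(x)=O(1/n^2)$.

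The stability condition \eqref{H2_bar} reduces to uniform moment bounds. The true semigroup is controlled because the CIR process has polynomial moments of every order that are bounded uniformly on $[0,T]$, so $P_t$ maps each polynomial-growth seminorm into itself with a constant independent of $t\le T$. For the scheme, the explicit form \eqref{def_varphi}--\eqref{def_X1} shows that $\hat X^x_h$ is a nonnegative variable whose increments are driven by the compactly supported variable $Y$ (and, on $\{x<K^Y_2(h)\}$ when $\sigma^2>4a$, by the variable $\hat X^{x,d}_h$ matching the first two moments of $X^x_h$); a direct moment computation followed by an induction on the number of composed steps then gives $\max_{0\le j\le n}\|Q_1^{[j]}f\|_k\le C\|f\|_k$, which is \eqref{H2_bar}.

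The crux is the one-step weak error \eqref{H1_bar} with $\alpha=2$, i.e. that for every $k$ there is $k'\ge k$ with $\|(P_h-Q_1)g\|_k\le C\,\|g\|_{k'}\,h^{3}$. I would obtain it by Taylor expanding both operators in $h$. On the one hand, Dynkin's formula gives $P_hg=\sum_{j=0}^2\frac{h^j}{j!}\cL^jg+\mathrm{Rem}_P$; since the generator \eqref{def_LCIR} has the \emph{smooth} polynomial coefficients $a-kx$ and $\frac12\sigma^2x$, each $\cL^jg$ again lies in $\Cpol$, and the remainder, a third iterate $\cL^3g$ integrated against the uniformly bounded moments of $X$, is $O(h^3)$ in every seminorm. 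On the other hand, using the splitting $\hat X^x_h=X_0(h/2,X_1(\sqrt h\,Y,X_0(h/2,x)))$ with $\cL=V_0+\frac12V_1^2$ from \eqref{def_V0_V1}, I would expand $g(\hat X^x_h)$ in powers of the increment $\sqrt h\,Y$ and take the expectation. The moment conditions $\E[Y^m]=\E[N^m]$ for $m\le 5$ cancel every half-integer power of $h$ and force the $h^0,h^1,h^2$ coefficients to coincide with $g,\ \cL g,\ \frac12\cL^2g$ respectively; this is precisely the Ninomiya--Victoir second-order (Strang-type) identity. Subtracting the two expansions, the polynomial parts cancel and the local error is $O(h^3)$.

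The main obstacle is controlling the remainder in the expansion of the scheme, which is where the square-root singularity bites: since $X_1(w,x)=(\sqrt x+\frac\sigma2 w)^2$, the one-step increment $\hat X^x_h-x$ is only of size $\sqrt h$ (with a $\sqrt x$ prefactor) rather than $h$, so one must push the expansion in $w$ to sixth order to reach a genuine $h^3$ remainder. This is exactly why the five-moment matching of $Y$ is needed and why the test function must carry many derivatives of polynomial growth---the loss of regularity being tracked by the function $\psi_Q$ (cf. the footnote $\psi_Q(m,L)=(2(m+3),L-1)$). A secondary, technical point arises only when $\sigma^2>4a$: on the region $\{x<K^Y_2(h)\}$, whose width is $O(h)$ by \eqref{threshold_gen}, the variable $\hat X^{x,d}_h$ matches only two moments of $X^x_h$, and one checks that the favourable $O(h)$ scaling of both $x$ and the region keeps the local contribution at order $h^3$. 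Inserting the one-step bound into the telescoping sum \eqref{dev_1} over the $n=T/h_1$ steps, together with the stability estimate, produces the announced global weak error $O(1/n^2)$.
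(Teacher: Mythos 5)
First, a point of context: the paper does not prove Theorem~\ref{thm_MCOM} at all --- it is quoted verbatim from \cite[Theorem 2.8]{AA_MCOM}. The proof there uses the \emph{dual} telescoping to the one you chose: it writes the global error as $\sum_k Q_1^{[k]}(Q_1-P_{h_1})P_{(n-k-1)h_1}f$ (scheme on the left, semigroup on the right), so that all the regularity burden falls on $u(t,x)=P_{T-t}f(x)$, whose derivatives are controlled by estimates of the type of Proposition~\ref{deriv_CIR_functionals}, while the scheme only needs to satisfy uniform \emph{moment} bounds. Your proposal instead uses the forward decomposition \eqref{dev_1}--\eqref{nu_egal_1}, which requires the full stability \eqref{H2_bar} of $Q_1^{[k]}f$ in seminorms that involve derivatives (since $\psi_Q(0)$ is a norm counting six derivatives). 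This is not a cosmetic difference: it is precisely why the present paper can only verify \eqref{H1_bar}--\eqref{H2_bar} for $\sigma^2\le 4a$ and $Y\sim\mathcal N(0,1)$, and why Theorem~\ref{thm_main} is restricted to that regime while Theorem~\ref{thm_MCOM} is not.

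The concrete gap is in your treatment of \eqref{H2_bar}. You claim that ``a direct moment computation followed by an induction on the number of composed steps'' gives $\max_j\|Q_1^{[j]}f\|_k\le C\|f\|_k$. For seminorms involving derivatives this is false as stated: differentiating $x\mapsto\E[f(X_1(\sqrt h\,Y,x))]$ produces factors $1/\sqrt x$ from the square root, and removing them requires the integration-by-parts machinery of Lemma~\ref{regular_density}, which needs $Y$ to have a smooth density with $\eta^*_m\ge 0$ --- a property that, combined with the moment matching, forces $Y$ Gaussian (Theorem~\ref{thm_carac_gauss}) and therefore excludes the discrete $Y$ and the indicator switch of the scheme~\eqref{Alfonsi_scheme}, i.e.\ the whole $\sigma^2>4a$ half of the statement you are proving. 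Moreover, even in the Gaussian case the induction over $n$ steps only closes because the per-step constant is of the sharp form $1+Ch$ (as emphasized after Lemma~\ref{regular_density}); a per-step bound $C>1$ would blow up as $C^n$. Your sketch of the $\{x<K^Y_2(h)\}$ region addresses the local error but not the real obstruction, which is that the indicator destroys differentiability of $x\mapsto Q_1f(x)$ and hence \eqref{H2_bar} itself. To prove Theorem~\ref{thm_MCOM} in the generality claimed, you must switch to the backward telescoping of \cite{AA_MCOM}, where only moment bounds on the scheme and smoothness of $P_{T-t}f$ are needed; your architecture, suitably completed along the lines of Propositions~\ref{H1_bar_mL} and~\ref{prop_H2_NV}, proves only the $\sigma^2\le 4a$, Gaussian case.
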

The goal of this paper is to extend this result and prove the estimates~\eqref{H1_bar} and~\eqref{H2_bar} for a suitable space of functions and a suitable family of seminorms. We are able to prove such results only in the case $\sigma^2\le 4a$: the indicator function in~\eqref{Alfonsi_scheme} creates a singularity that is difficult to handle in the analysis. In Section~\ref{Sec_pol_fct}, we first prove~\eqref{H1_bar} and~\eqref{H2_bar} for polynomial test functions. Then, we deal in Section~\ref{Sec_main} with the much technical case of smooth test functions with derivatives of polynomial growth. We state here our main result, the proof of which is given in Section~\ref{Sec_main}.
\begin{theorem}\label{thm_main}
  Let $\hat{X}^x_t$ be the scheme defined by~\eqref{NV_scheme} for $\sigma^2\le 4a$ and $Q_lf(x)=\E[f(\hat{X}^x_{h_l})]$, for $l\ge 1$. 
  Then, for all $f\in \CpolK{18}$, we have $\hat{P}^{2,n}f(x)-P_Tf(x)=O(1/n^4)$ as $n\to \infty$.\\
 \blue{Besides, for  $f\in \Cpol$, we have $\hat{P}^{{\nu,n}}f(x)-P_Tf(x)=O(1/n^{2\nu})$.}
\end{theorem}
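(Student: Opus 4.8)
The plan is to run the whole argument inside the Alfonsi--Bally machinery of Section~\ref{Sec_nutshell}. Concretely, it suffices to exhibit a vector space $F\supseteq\CpolK{18}$ (resp.\ $F\supseteq\Cpol$) equipped with a family of seminorms for which the elementary operators $Q_lf(x)=\E[f(\hat X^x_{h_l})]$ obey \eqref{H1_bar} with $\alpha=2$ and \eqref{H2_bar}; then \eqref{const_Pnu} gives at once $\|P_Tf-\hat P^{\nu,n}f\|_0\le C\|f\|_k\,n^{-2\nu}$, and for $\nu=2$ the explicit expansion \eqref{dev_2}--\eqref{devt_erreur} underlying \eqref{def_P_2} yields the stated $O(1/n^4)$. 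The order $\alpha=2$ is nothing but the second-order property of the Ninomiya--Victoir scheme recorded in Theorem~\ref{thm_MCOM}, now needed in its \emph{one-step} (local, $O(h^3)$) form rather than the global one. Thus the real content is the verification of \eqref{H1_bar} and \eqref{H2_bar} in a functional setting adapted to the square-root degeneracy of \eqref{CIR_SDE}.

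The first step is the choice of seminorms. Following the footnote I would use a doubly indexed family $\|\cdot\|_{(m,L)}$, where $m$ counts the number of controlled derivatives and $L$ is an auxiliary level, and I would build weights into the definition that absorb the negative powers of $x$ (and the small-time blow-up) generated when one differentiates the square-root appearing in $\varphi$ and in $P_tf$. The map $\psi_Q(m,L)=(2(m+3),L-1)$ then records the price of one use of \eqref{H1_bar}: one unit of level is spent and the smoothness requirement is more than doubled. Iterating $\psi_Q$ explains the two thresholds: for $\hat P^{2,n}$ the worst remainder $R_2^{h_1}$ in \eqref{dev_2} costs two applications, $0\mapsto 6\mapsto 18$, whence $\CpolK{18}$; for general $\nu$ the map is iterated $\nu$ times and the demand on the number of derivatives grows without bound, which is why the higher-order statement is phrased for $f\in\Cpol$.

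For \eqref{H1_bar} I would follow the Talay--Tubaro/Ninomiya--Victoir route attached to the splitting $\cL=V_0+\tfrac12 V_1^2$ of \eqref{def_V0_V1}. On one hand $P_hf=\sum_{j=0}^{2}\tfrac{h^j}{j!}\cL^jf+O(h^3)$; on the other, $\hat X^x_h=X_0(h/2,X_1(\sqrt h\,N,X_0(h/2,x)))$ admits an expansion in powers of $\sqrt h$ in which, $N$ being Gaussian, all odd moments vanish and the symmetric (Strang) arrangement forces the half-integer powers to cancel, so that $\E[f(\hat X^x_h)]$ reproduces the same polynomial $\sum_{j=0}^{2}\tfrac{h^j}{j!}\cL^jf(x)$ up to $O(h^3)$. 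The delicate point is to carry this expansion not pointwise but in the seminorm $\|\cdot\|_{(m,L)}$: the chain rule applied to $X_0\circ X_1\circ X_0$ produces factors $\partial_x\sqrt{(a-\sigma^2/4)\psi_k(h/2)+e^{-kh/2}x}$, hence powers of the square-root that are singular as $x\to 0$, and one must check that the chosen weights compensate them so that the $(m,L)$-seminorm of the remainder is bounded by $h^3$ times the $\psi_Q(m,L)$-seminorm of $f$. Here $\sigma^2\le 4a$ is essential: it keeps the radicand nonnegative for every $x\ge 0$, so $\hat X^x_h\ge 0$ and the scheme is genuinely the smooth map $\varphi$ of \eqref{NV_scheme}, with no indicator and no boundary singularity of the kind that forced the correction \eqref{Alfonsi_scheme}.

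Condition \eqref{H2_bar} reduces to uniform stability. I would first prove polynomial moment bounds, namely $\sup_{t\le T}\E[(X^x_t)^p]\le C(1+x^p)$ for the exact flow, and a uniform-in-$l$ bound $C(1+x^p)$ for the $p$-th moment of the $j$-step iterates of the scheme ($0\le j\le n^l$), both following from the affine moment structure of the CIR process and the uniform linear growth of $\varphi$ in $x$ for $h\le T$; combined with the weighted seminorms these give $\max_{0\le j\le n^l}\|Q_l^{[j]}f\|_{(m,L)}+\sup_{t\le T}\|P_tf\|_{(m,L)}\le C\|f\|_{(m,L)}$. The genuine obstacle, and the core of Section~\ref{Sec_main}, is the regularity theory of the semigroup: one has to show that $P_tf$ stays in the weighted space with seminorms bounded \emph{uniformly in $t\in[0,T]$}, in spite of the degeneracy of $V_1=\sigma\sqrt{x}\,\partial_x$ at the origin. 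This amounts to controlling how the $x^{-1/2}$-type singularities produced by repeated differentiation of $P_tf$ propagate, and designing the weights so that these estimates close; I expect this to consume the bulk of the technical work, and it is precisely what the two-index structure of the seminorms is engineered to make possible.
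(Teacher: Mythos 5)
Your overall architecture is the paper's: verify \eqref{H1_bar} and \eqref{H2_bar} for the doubly indexed norms, with $\psi_Q(m,L)=(2(m+3),L-1)$, and your bookkeeping $0\mapsto 6\mapsto 18$ for $\hat P^{2,n}$ and the passage to $\Cpol$ for general $\nu$ are exactly right. The local order-$3$ expansion of $\E[f(\hat X^x_h)]$ against $\sum_{j\le 2}\frac{h^j}{j!}\cL^jf$ via the splitting $\cL=V_0+\frac12V_1^2$ is also the route taken (Propositions~\ref{LCIR_Expansion} and~\ref{prop_H1sch}), and the cancellation of the singular derivatives of the square root is indeed obtained by symmetry of the innovation, made quantitative in Lemma~\ref{regular_rep} and Corollary~\ref{cor_psign}.

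There is, however, a genuine gap in your treatment of \eqref{H2_bar} for the scheme. You reduce it to polynomial moment bounds plus ``uniform linear growth of $\varphi$'', with singular weights built into the seminorms to absorb the $x^{-1/2}$ factors. Moment bounds only control the $j=0$ part of $\|Q_l^{[j]}f\|_{m,L}$; the difficulty is the derivatives $\partial_x^{m'}Q_l^{[j]}f$ for $1\le m'\le m$, and the paper's norms~\eqref{mLnorm} carry no singular weight at the origin --- the estimate must close in the \emph{same} norm on both sides with a one-step constant of the form $1+Ct$, since it is iterated $n^l$ times. A chain-rule bound through $X_1$ either loses derivatives or produces constants that blow up under iteration. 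The paper's resolution (Lemma~\ref{regular_density}) is a repeated integration by parts in the innovation variable yielding $\partial_x^m\E[f(X_1(\sqrt tY,x))]=\int\!\!\int(u-u^2)^{m-1}f^{(m)}(w)\,\eta_m^*(y)\,du\,dy$ with a kernel $\eta_m^*$ that must be \emph{nonnegative}; this positivity holds for the Gaussian (Lemma~\ref{gaussian_eta_positivity}) and, together with the moment matching, essentially forces $Y\sim\mathcal N(0,1)$ (Theorem~\ref{thm_carac_gauss}) --- which is why Theorem~\ref{thm_main} is stated for the scheme~\eqref{NV_scheme} and not for a general symmetric $Y$. Your proposal does not identify this mechanism, and without it the iteration in \eqref{H2_bar} does not close. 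Likewise, for $\sup_{t\le T}\|P_tf\|_{m,L}$ the paper does not design weights but exploits the explicit noncentral chi-square density of $X^x_t$ and an integration by parts leading to~\eqref{formula_derivative}; some such structural input is needed, since the degeneracy of $V_1=\sigma\sqrt x\,\partial_x$ rules out a soft a priori argument.
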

\blue{Let us stress here that Theorem~\ref{thm_main}  gives an asymptotic result as $n\to \infty$. It thus might happen that for small values of $n$,  $\hat{P}^{2,n}$ is less accurate than $\hat{P}^{1,n}=Q_1^{[n]}$ for some $f\in \Cpol$ and $x\ge 0$. In practice, we have always noticed in our numerical experiments that $\hat{P}^{2,n}$ is more accurate than $\hat{P}^{1,n}$. However, the estimated rates of convergence obtained from relatively small values of $n$ may be different from the theoretical asymptotic ones, see Figures~\ref{CIR_Plot1},\ref{CIR_Plot2} and~\ref{CIR_Plot3} where are given the estimated rates for $\hat{P}^{1,n}$, $\hat{P}^{2,n}$ and $\hat{P}^{3,n}$.
}

\section{The case of polynomial test functions}\label{Sec_pol_fct}

In this section, we want to illustrate the method and consider test functions that are  polynomial test functions. We define for $L\in \N$
$$\PLRp{L}=\{ f: \R_+\to \R,  f(x)=\sum_{j=0}^L a_j x^j \text{ for some } a_0,\dots,a_L \in \R \},$$
the vector space of polynomial functions over $\R_+$ with degree less or equal to~$L$. We also define  $\PRp= \cup_{L\in \N} \PLRp{L}$ the space of polynomial functions.
We endow $\PRp$ with the following norm:
\begin{equation}\label{norm_pol}
  \|f\| = \sjzL |a_j|, \text{ for } f(x)=\sjzL a_j x^j.
\end{equation}

We consider the case $\sigma^2\le 4a$ and consider the scheme~\eqref{Alfonsi_scheme} for the CIR process with a time step~$t>0$, $\hat{X}^x_t =  \varphi(x,t,\sqrt{t}Y)$.  The approximation scheme $Q_l$ is then defined by $Q_lf=\E[f(\hat{X}^x_{h_l})]$. The goal of this section is to prove~\eqref{H1_bar} and~\eqref{H2_bar} for the norm~\eqref{norm_pol}. We make the following assumption on~$Y$.

\noindent {\bf Assumption~$(\mathcal{H}_Y)$:} $Y:\Omega\to \R$ is a symmetric random variable such that $\E[|Y|^k]<\infty$ for all $k\in \N$, and $\E[Y^k]=\E[N^k]$ for $k\in \{2,4\}$ with $N\sim \mathcal{N}(0,1)$.

We now state two lemmas that will enable us to prove that~\eqref{H2_bar} is satisfied by the scheme~\eqref{Alfonsi_scheme}. Lemma~\ref{estimates_pol} shows that polynomials functions are preserved by the approximation scheme, and gives short time estimate for the polynomial norm. Lemma~\ref{Moments_Formula_CIR} gives similar results for the CIR diffusion. The proofs of these lemmas are quite elementary and are postponed to Appendix~\ref{App_proof_sec_pol}.

\begin{lemma}\label{estimates_pol}
  Let $T\ge 0$, $t\in[0,T]$, $f\in\PLRp{L}$ and assume~$(\mathcal{H}_Y)$ and $\sigma^2\le 4a$.  Then, we have $f(X_0(t,\cdot)),\E[f(X_1(\sqrt{t}Y, \cdot ))] \in \PLRp{L}$ where $X_0$ and $X_1$ are defined by~\eqref{def_X0_and_psi_k} and~\eqref{def_X1}, and
  \begin{enumerate}
    \item $\|f(X_0(t,\cdot))\| \leq (1\vee e^{-kLt})(1+C_{X_0}^Lt) \|f\|$,
    \item $\| \E[f(X_1(\sqrt{t}Y, \cdot ))] \| \leq (1+\E[Y^{2L}]C_{X_1}^Lt) \| f \|,$
  \end{enumerate}
  for some constants $C_{X_0},C_{X_1}$ depending only on~$(a,\sigma,T)$.
\end{lemma}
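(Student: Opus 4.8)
The plan is to prove both parts of Lemma~\ref{estimates_pol} by direct computation, exploiting the explicit affine structure of $X_0$ and the quadratic structure of $X_1$. First I would establish that the two maps preserve $\PLRp{L}$, which is the structural backbone of the estimate. For $X_0(t,x)=e^{-kt}x+\psi_k(t)(a-\sigma^2/4)$, this is an affine map in $x$, so $f(X_0(t,\cdot))$ is a polynomial of the same degree $L$ whenever $f$ is; the condition $\sigma^2\le 4a$ guarantees that the constant term $\psi_k(t)(a-\sigma^2/4)$ is nonnegative, keeping us on $\R_+$. For $X_1(\sqrt{t}Y,x)=(\sqrt{x}+\sqrt{t}\,\sigma Y/2)^2 = x + \sqrt{t}\,\sigma Y\sqrt{x} + t\sigma^2 Y^2/4$, expanding the binomial produces half-integer powers of $x$, but after taking the expectation $\E[f(X_1(\sqrt{t}Y,\cdot))]$ the symmetry of $Y$ (Assumption $(\mathcal{H}_Y)$) kills all odd moments and hence all the odd powers of $\sqrt{x}$, leaving a genuine polynomial in $x$ of degree $L$.

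For the quantitative bounds, it suffices by linearity and the triangle inequality for the norm~\eqref{norm_pol} to treat a single monomial $f(x)=x^j$ with $j\le L$ and then sum. For part~(1), I would expand $X_0(t,x)^j = (e^{-kt}x+\psi_k(t)(a-\sigma^2/4))^j$ by the binomial theorem, read off each coefficient, and bound the sum of absolute values of coefficients. The leading coefficient contributes $e^{-jkt}\le (1\vee e^{-kLt})$, while the lower-order terms carry factors of $\psi_k(t)=O(t)$, producing the $(1+C_{X_0}^L t)$ factor after collecting the geometric-type sum; here one uses that $\psi_k(t)$ and $e^{-kt}$ are bounded on $[0,T]$ with constants depending only on $(a,\sigma,T)$ (through $k$). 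For part~(2), I would similarly expand $\E[X_1(\sqrt{t}Y,x)^j]=\E[(x+\sqrt{t}\,\sigma Y\sqrt{x}+t\sigma^2Y^2/4)^j]$, use the multinomial expansion, and observe that the $x^j$ term has coefficient exactly $1$ (the contribution with no $t$-factors), while every other surviving term carries at least one power of $t$ and a moment $\E[Y^{2m}]$ bounded by $\E[Y^{2L}]$; factoring out $t$ from those terms yields the claimed $(1+\E[Y^{2L}]C_{X_1}^L t)$ bound.

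The main obstacle, and the step deserving the most care, is the bookkeeping in part~(2): after expanding, one must verify that the half-integer powers of $x$ truly cancel in expectation and that the remaining integer powers $x^{j-m}$ for $m\ge 1$ can be grouped so that their coefficients are controlled by a single constant $C_{X_1}$ raised to the power $L$ times a single moment $\E[Y^{2L}]$ times a single factor of $t$. The delicate point is ensuring the $t$-dependence is linear (one factor of $t$ pulled out uniformly) rather than spread across different powers $t^m$; this works because on the bounded interval $[0,T]$ one can absorb the higher powers $t^m\le T^{m-1}t$ into the constant, at the cost of enlarging $C_{X_1}$ to depend on $T$. A symmetric remark handles the $\psi_k(t)^m$ terms in part~(1). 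Once the monomial estimates are in hand with constants uniform in $j\le L$, summing over the coefficients $a_j$ of a general $f\in\PLRp{L}$ and invoking the definition~\eqref{norm_pol} of the norm gives the stated inequalities directly, with $C_{X_0},C_{X_1}$ depending only on $(a,\sigma,T)$ as claimed.
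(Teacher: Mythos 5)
Your proposal is correct and follows essentially the same route as the paper's proof: binomial/multinomial expansion of $X_0(t,x)^j$ and $\E[X_1(\sqrt{t}Y,x)^j]$, cancellation of half-integer powers by the symmetry of $Y$, isolation of the leading term, and absorption of higher powers of $t$ and $\psi_k(t)$ into constants depending on $T$. The only points to tighten in a full write-up are the one-line Lyapunov argument giving $\E[Y^{2m}]\le\E[Y^{2L}]$ for $m\le L$ (using $\E[Y^2]=1$), and, for $k<0$, extracting $e^{-kt}$ from $\psi_k(t)$ so that all $k$-dependence sits in the prefactor $(1\vee e^{-kLt})$ and $C_{X_0}$ genuinely depends only on $(a,\sigma,T)$.
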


\begin{lemma}\label{Moments_Formula_CIR}
  Let $(X^x_t,t\ge 0)$ be the CIR process starting from $x\in\R_+$. For $m\in \N$, we define $\tilde{u}_m(t,x):= \E[(X^x_t)^m]$. There exists $C^\infty$ functions $\tilde{u}_{j,m}:\R_+\to \R$ that depend on $(k,a,\sigma)$ such that:
  \begin{equation}
    \tilde{u}_m(t,x) = \sum_{j=0}^m \tilde{u}_{j,m}(t) x^j.
  \end{equation}
  If $f \in \PLRp{L}$, then we have $\E[f(X^\cdot_t)]\in \PLRp{L}$ and for $t\in [0,T]$,
  \begin{equation}\label{Pol_Estimate_CIR}
    \|\E[f(X^{\cdot}_t)]\| \leq C_{\text{cir}}(L,T) \|f\|,
  \end{equation}
  with $C_{\text{cir}}(L,T) = \max_{t\in[0,T],m\in \{0,\dots,L\} } \sum_{j=0}^m |\tilde{u}_{j,m}(t)|$.
\end{lemma}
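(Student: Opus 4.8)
The statement to prove is Lemma~\ref{Moments_Formula_CIR}, which makes two claims about the CIR moments: first, a structural claim that $\tilde u_m(t,x)=\E[(X^x_t)^m]$ is a polynomial in $x$ of degree~$m$ with smooth (time-dependent) coefficients, and second, a quantitative claim that the induced operator $f\mapsto\E[f(X^\cdot_t)]$ maps $\PLRp{L}$ into itself and is bounded in the polynomial norm uniformly over $t\in[0,T]$. The plan is to obtain the structural claim from the affine/polynomial property of the CIR diffusion and then read off the norm bound directly from the coefficients.

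\textbf{Key steps.} First I would establish the moment recursion. Applying Itô's formula (equivalently, the generator $\cL$ from~\eqref{def_LCIR}) to $x\mapsto x^m$ gives
\[
\frac{d}{dt}\tilde u_m(t,x)=\E\!\left[\cL(\cdot^m)(X^x_t)\right]
= m(a-k X^x_t)(X^x_t)^{m-1}+\tfrac12\sigma^2 m(m-1)(X^x_t)^{m},
\]
which after taking expectations becomes the linear ODE system
\[
\tilde u_m'(t,x)=\Big(\tfrac12\sigma^2 m(m-1)-km\Big)\tilde u_m(t,x)+ma\,\tilde u_{m-1}(t,x),
\]
with $\tilde u_0\equiv 1$ and initial condition $\tilde u_m(0,x)=x^m$. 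This closes from below: $\tilde u_m$ depends only on $\tilde u_m$ and $\tilde u_{m-1}$, so by induction on $m$ each $\tilde u_m(t,\cdot)$ is a polynomial of degree exactly $m$ with coefficients $\tilde u_{j,m}(t)$ that are smooth in $t$ (they solve a triangular linear system of ODEs with constant coefficients, and are in fact explicit exponential-polynomials). A mild care point is justifying the interchange of $\E$ and $\frac{d}{dt}$ and the finiteness of moments; this follows from the standard fact that the CIR process has finite polynomial moments of all orders, locally uniformly in $t$, which I would invoke as known.

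\textbf{The norm bound.} Once linearity of $f\mapsto\E[f(X^\cdot_t)]$ and the representation $\tilde u_m(t,x)=\sum_{j=0}^m\tilde u_{j,m}(t)x^j$ are in hand, the second claim is essentially bookkeeping. For $f(x)=\sum_{m=0}^L a_m x^m$ we have $\E[f(X^\cdot_t)]=\sum_{m=0}^L a_m\sum_{j=0}^m\tilde u_{j,m}(t)x^j$, so the $j$-th coefficient is $\sum_{m\ge j}a_m\tilde u_{j,m}(t)$, and by the triangle inequality
\[
\|\E[f(X^\cdot_t)]\|\le\sum_{j=0}^L\sum_{m=j}^L|a_m|\,|\tilde u_{j,m}(t)|
\le\Big(\max_{0\le m\le L,\,t\le T}\sum_{j=0}^m|\tilde u_{j,m}(t)|\Big)\|f\|,
\]
which is exactly the asserted constant $C_{\text{cir}}(L,T)$. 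Finiteness of this maximum uses continuity of each $\tilde u_{j,m}$ on the compact $[0,T]$.

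\textbf{Main obstacle.} There is no deep difficulty here; the lemma is deliberately elementary. The only genuine point requiring attention is the rigorous justification of the moment ODE—namely that all polynomial moments of $X^x_t$ are finite and locally bounded in $t$, and that differentiation under the expectation is legitimate—so that the formal Itô computation is valid. I would dispatch this by citing the standard integrability properties of the CIR process (its transition law being a noncentral chi-square, with moments of every order). Everything else is the triangular induction on $m$ and the elementary norm estimate above.
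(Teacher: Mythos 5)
Your approach is essentially the paper's: derive the moment ODE via It\^o's formula / the generator, observe that the resulting system is triangular in $m$ so that $\tilde u_m(t,\cdot)$ is a degree-$m$ polynomial with smooth coefficients, and then read off the norm bound by the triangle inequality. The extra care you give to justifying the interchange of expectation and time-derivative (finite moments of the CIR) is a point the paper handles implicitly, and is welcome.

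There is, however, a computational slip in your moment recursion. Since $\tfrac12\sigma^2 x\,\partial_x^2(x^m)=\tfrac12\sigma^2 m(m-1)x^{m-1}$ has degree $m-1$ (not $m$), the term $\tfrac12\sigma^2 m(m-1)$ must multiply $\tilde u_{m-1}$, not $\tilde u_m$. The correct ODE is
\begin{equation*}
\partial_t\tilde u_m(t,x)=-km\,\tilde u_m(t,x)+\Big(ma+\tfrac12\sigma^2 m(m-1)\Big)\tilde u_{m-1}(t,x),
\end{equation*}
as in the paper. Your version would produce incorrect coefficient functions $\tilde u_{j,m}$. Fortunately the error does not damage the logical structure: the system remains triangular, so the existence of smooth $\tilde u_{j,m}$, the degree count, and the norm estimate all go through once the ODE is corrected.
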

We are now in position to prove the main result of this section, which is a weaker (but easier to prove) version of our main Theorem~\ref{thm_main}, since it only applies to polynomial test functions. \blue{Let us point however that it applies to a larger family of schemes, namely to the schemes $\varphi(x,t,\sqrt{t}Y)$ with $Y$ satisfying~$(\mathcal{H}_Y)$, while Theorem~\ref{thm_main} requires to take $Y\sim \mathcal{N}(0,1)$.}

\begin{prop}
  Let $\sigma^2\le 4a$ and assume that $Y$ satisfies~$(\mathcal{H}_Y)$.  For any $L \in \N$, the properties~\eqref{H1_bar} and \eqref{H2_bar} are satisfied by the scheme~\eqref{Alfonsi_scheme} $\hat{X}^x_t=\varphi(x,t, \sqrt{t}Y)$ for $F=\PLRp{L}$ and the norm~\eqref{norm_pol}. Then, we have for any $f\in \PLRp{L}$,
  $$\|\E[f(X^x_T)-\hat{P}^{\nu,n}f\| \le C_L \|f\| n^{-2\nu},$$
  for some constant $C_L$.
\end{prop}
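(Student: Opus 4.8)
The plan is to verify the two abstract hypotheses \eqref{H1_bar} and \eqref{H2_bar} for the concrete setting $F=\PLRp{L}$ with the norm \eqref{norm_pol}, and then to invoke the general machinery of Alfonsi and Bally that produces the estimate \eqref{const_Pnu}. The rate parameter is $\alpha=2$, which is exactly why the final bound reads $n^{-2\nu}$: since $\hat{P}^{\nu,n}$ satisfies \eqref{const_Pnu} with exponent $\nu\alpha=2\nu$, the conclusion $\|\E[f(X^x_T)]-\hat{P}^{\nu,n}f\|\le C_L\|f\|\,n^{-2\nu}$ follows automatically once the hypotheses are in place. Thus the entire content of the proof is the verification of \eqref{H1_bar} and \eqref{H2_bar}. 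A preliminary observation is that $\PLRp{L}$ is genuinely stable under all the relevant operators: by Lemma~\ref{estimates_pol} the scheme operator $Q_l$ maps $\PLRp{L}$ into itself (since $f\mapsto f(X_0(t,\cdot))$ and $f\mapsto\E[f(X_1(\sqrt{t}Y,\cdot))]$ both preserve degree and $\varphi$ is the composition $X_0\circ X_1\circ X_0$), and by Lemma~\ref{Moments_Formula_CIR} the semigroup $P_t$ maps $\PLRp{L}$ into itself. Hence we may take $\psi_Q(k)=k$ (no loss of regularity, consistently with the footnote that for polynomials the index need not grow), and $F=\PLRp{L}$ is a legitimate invariant space.

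To establish \eqref{H2_bar}, I would first bound $\sup_{t\le T}\|P_tf\|$ directly by Lemma~\ref{Moments_Formula_CIR}, which gives $\|\E[f(X^\cdot_t)]\|\le C_{\text{cir}}(L,T)\|f\|$ uniformly in $t\in[0,T]$. For the scheme side I would control the single-step operator: combining the two parts of Lemma~\ref{estimates_pol} through the composition defining $\varphi$ yields a bound of the form $\|Q_lf\|\le(1+C h_l)\|f\|$ for a constant $C$ depending only on $(a,\sigma,T,L)$, because each of the three factors contributes a multiplicative $(1+C' h_l)$ term (with the $(1\vee e^{-kLt})$ factor absorbed into the constant over $t\in[0,T]$). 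Iterating this single-step estimate $j\le n^l$ times gives $\|Q_l^{[j]}f\|\le(1+C h_l)^{n^l}\|f\|\le e^{CT}\|f\|$, since $n^l h_l=T$. This is the standard ``stability via Gronwall'' argument and supplies the uniform constant required by \eqref{H2_bar}.

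The substantive step is \eqref{H1_bar}, the one-step weak error estimate $\|(P_{h_l}-Q_l)f\|\le C\|f\|\,h_l^{3}$. Here I would exploit that on polynomials both $P_{h_l}f$ and $Q_lf$ can be computed exactly as polynomials whose coefficients are explicit functions of $h_l$. It suffices to treat monomials $f(x)=x^m$ for $m\le L$ and then sum, since the norm is the sum of absolute values of coefficients. For the semigroup, the coefficients $\tilde u_{j,m}(t)$ of $P_t x^m$ are smooth in $t$ (Lemma~\ref{Moments_Formula_CIR}) and satisfy a linear recursion coming from $\cL$ acting on monomials; in particular $\tilde u_{j,m}(0)=\delta_{jm}$ and the first two $t$-derivatives at $t=0$ are governed by $\cL$ and $\cL^2$. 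For the scheme, expanding $\varphi(x,h_l,\sqrt{h_l}Y)$ and taking expectation produces polynomial coefficients that are likewise smooth in $h_l$; the key algebraic fact is that, by the splitting $\cL=V_0+\tfrac12 V_1^2$ together with the moment matching $\E[Y^k]=\E[N^k]$ for $k\in\{2,4\}$ imposed by $(\mathcal{H}_Y)$, the Taylor expansions in $h_l$ of the scheme and of the exact semigroup agree up to and including order $h_l^{2}$, so their difference is $O(h_l^{3})$ coefficientwise. This is precisely the second-order ($\alpha=2$) property of the Ninomiya--Victoir splitting specialized to the CIR, and it is the content of Theorem~\ref{thm_MCOM}; the point is that on the finite-dimensional space $\PLRp{L}$ one can make it quantitative and uniform, obtaining $\|(P_{h_l}-Q_l)x^m\|\le C_m h_l^{3}$ with $C_m$ controlled by $\sup_{t\le T}$ of the third derivatives of the (finitely many) coefficient functions. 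Summing over $m\le L$ gives \eqref{H1_bar} with $\alpha=2$.

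The main obstacle, and the place where the moment assumption $(\mathcal{H}_Y)$ is genuinely used, is verifying that the order-$h_l^{2}$ terms in the two expansions cancel: this requires matching not only the drift contributions but also the fourth-moment term of $Y$ against the corresponding term in $\cL^2$ (the second-order Taylor coefficient of the semigroup involves $\sigma^4$ and hence the fourth moment of the Gaussian increment). Once this cancellation is checked at the level of the coefficient polynomials, the rest is bookkeeping. I would therefore organize the argument as: (i) record stability, giving \eqref{H2_bar}; (ii) Taylor-expand both $P_{h_l}x^m$ and $Q_l x^m$ to order $h_l^{2}$ with explicit remainder, using Lemmas~\ref{estimates_pol} and~\ref{Moments_Formula_CIR} to keep everything inside $\PLRp{L}$; (iii) invoke the splitting identity and $(\mathcal{H}_Y)$ to cancel the $h_l^{0},h_l^{1},h_l^{2}$ terms, yielding \eqref{H1_bar} with $\alpha=2$; and (iv) quote the Alfonsi--Bally construction to pass from \eqref{H1_bar}--\eqref{H2_bar} to the stated bound $C_L\|f\|\,n^{-2\nu}$.
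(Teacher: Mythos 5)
Your proposal is correct in outline and its treatment of \eqref{H2_bar} coincides with the paper's: both bound $\sup_{t\le T}\|P_tf\|$ by Lemma~\ref{Moments_Formula_CIR} and obtain the scheme stability by composing the two estimates of Lemma~\ref{estimates_pol} through $\varphi=X_0\circ X_1\circ X_0$ and using $1+x\le e^x$ to turn the per-step factor into $e^{Ch_l}$, whence $e^{CT}$ after $n^l$ iterations. Where you diverge is \eqref{H1_bar}. You propose to Taylor-expand the coefficient functions of $P_t x^m$ and $Q_l x^m$ in $t$ and to verify directly, via the splitting $\cL=V_0+\frac12 V_1^2$ and the moment matching in $(\mathcal{H}_Y)$, that the terms up to order $t^2$ cancel coefficientwise. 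The paper instead does not redo this algebra: it quotes the already-established one-step weak error bound of \cite[Proposition 2.4]{AA_MCOM}, namely $|v_m(t,x)|\le C'_m t^3(1+|x|^{E'_m})$ where $v_m(t,x)=\E[(\hat X^x_t)^m]-\E[(X^x_t)^m]$, observes that $v_m(t,\cdot)$ is a polynomial of degree $\le m$ by Lemmas~\ref{estimates_pol} and~\ref{Moments_Formula_CIR}, and converts the pointwise bound into a bound on each coefficient $|v_{j,m}(t)|\le C_m t^3$ by evaluating at $L+1$ fixed points and inverting the Vandermonde matrix. This interpolation trick is the one genuinely new ingredient of the paper's argument that your route replaces. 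Your alternative is viable --- on monomials all coefficient functions are explicit and smooth in $t$, and the cancellation you assert is exactly the second-order property proved in \cite{AA_MCOM} (and, in the harder smooth-function setting, in Proposition~\ref{prop_H1sch} of this paper) --- but as written the cancellation at order $t^2$ is asserted rather than carried out, and it is the entire substance of that step; also note that the relevant citation is the one-step estimate \cite[Proposition 2.4]{AA_MCOM} rather than Theorem~\ref{thm_MCOM}, which is the global $O(1/n^2)$ statement. In short: same skeleton and same stability argument, but the paper's pointwise-to-coefficient conversion via Vandermonde lets it reuse the known local error estimate, whereas your coefficientwise expansion would require reproving it. Both yield \eqref{H1_bar} with $\alpha=2$ and the conclusion then follows from \cite[Theorem 3.10]{AB} in either case.
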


\begin{proof}
  We first prove~\eqref{H2_bar}. The property $\sup_{t\in[0,T]}\|P_tf\|$ is given by Lemma~\ref{Moments_Formula_CIR}.
  Since $\hat{X}^x_t=X_0(t/2,X_1(\sqrt{t}Y,X_0(t/2,x)))$, we get by Lemma~\ref{estimates_pol}
  $$
    \|\E[f(\hat{X}^\cdot_t)]\|\le  [(1\vee e^{-kLt/2})(1+C_{X_0}^Lt/2)]^2(1+\E[Y^{2L}]C_{X_1}^Lt)  \|f\|.
  $$
  We now use that $1+x\le e^x$ to get
  \begin{equation}\label{maj_scheme}
    \| \E[f(\hat{X}^\cdot_t)] \| \leq e^{((-k)^+L+C_{X_0}^L+\E[Y^{2L}]C_{X_1}^L)t} \|f\|.
  \end{equation}
  Since $Q_lf(x)=\E[f(\hat{X}^x_{T/n^l})]$, this yields to $\max_{0\le j \le n^l}\|Q^{[j]}_lf\| \le  e^{((-k)^+L+C_{X_0}^L+\E[Y^{2L}]C_{X_1}^L)T} \|f\|$.

  We now prove~\eqref{H1_bar}. Let $m\in \{0,\dots,L\}$ and $0<x_0<\dots<x_L$ be fixed real numbers (one may take for example $x_\ell=\ell+1$).  Lemmas~\ref{estimates_pol} and~\ref{Moments_Formula_CIR} give that $v_m(t,x)=\E[(\hat{X}^x_t)^m]-\E[(X^x_t)^m]=\sum_{j=0}^m v_{j,m}(t)x^j $. By~\cite[Proposition 2.4]{AA_MCOM}, we know that there exists $C'_m,E'_m$ such that for all $t \in (0,1)$, $|v_m(t,x)|\le C'_m t^3(1+|x|^{E'_m})$. Therefore, there exists  $\tilde{C}_m\in \R_+$ such that for all $ \ell \in \{0,\dots,L\}$, $|v_m(t,x_\ell)|\le \tilde{C}_m t^3$. By using the invertibility of the Vandermonde matrix, we get the existence of $C_m \in \R_+$ such that
  $$ |v_{j,m}(t)|\le C_m t^3,  \ j\in \{0,\dots,m\}.$$
  Therefore, we get for $f \in \PLRp{L}$
  $$\|\E[f(\hat{X}^\cdot_t)]-\E[f(X^\cdot_t)]\| \le \sum_{m=0}^L|a_m|\sum_{j=0}^m C_m t^3\le L \max_{m\in \{0,\dots,L\}}C_m \|f\| t^3,$$
  that gives~\eqref{H1_bar}. We conclude by applying~\cite[Theorem 3.10]{AB}.
\end{proof}

\section{Proof of Theorem~\ref{thm_main}}\label{Sec_main}

In Section~\ref{Sec_pol_fct}, we have obtained the convergence for test functions that are polynomial functions. For these test functions, the choice of the norm is straightforward and the proofs are not very technical and quite easy. However, one would like to obtain the convergence result for a much larger class of test functions. This is the goal of this section.

We consider test functions that are smooth with polynomial growth, whose derivatives have a polynomial growth. Namely, we introduce for $m,L \in \N$,
\begin{equation}
  \CpolKL{m}{L}= \left\{f:\R_+ \to \R \text{ of class } \mathcal{C}^m \ :  \ \max_{j\in\{0,\ldots,m\}} \sup_{x\geq 0}\frac{|f^{(j)}(x)|}{1+x^L}<\infty \right\},
\end{equation}
which we endow with the norm
\begin{equation}\label{mLnorm}
  \|f\|_{m,L} = \max_{j\in\{0,\ldots,m\}} \sup_{x\geq 0} \frac{|f^{(j)}(x)|}{1+x^L}.
\end{equation}

To prove Theorem~\ref{thm_main}, we need to prove the estimates~\eqref{H1_bar} and~\eqref{H2_bar} for this family of norms. This is the goal of the two next subsections. \blue{More precisely, we will show respectively the estimates
  $$\|(P_{h_l}-Q_l)f\|_{m,L+3}\le C h_l^3  \|f\|_{2(m+3),L}, \  m\le L+3, f\in  \CpolKL{2(m+3)}{L} $$
in Proposition~\ref{H1_bar_mL}  and 
  $$\sup_{t\ge T} \|P_{t}f\|_{m,L}+ \max_{0\le j \le n^l} \|Q^{[j]}_l f\|_{m,L} \le \|f\|_{m,L} C h_l^3, \  m\le L, f\in  \CpolKL{m}{L} $$
in Proposition~\ref{prop_H2_NV} for $Q_l$ as in Theorem~\ref{thm_main}. Note that $L$ has to be large enough: this is not an issue for our purpose since $\CpolKL{m}{L}\subset \CpolKL{m}{L+1}$, and we can work with $L$ as large as needed. We refer to the proof of Theorem~\ref{thm_main} in Subsection~\ref{Subsec_thm_main} for further details. }

Before, we summarize in the next lemma some properties of the norms defined in Equation~\eqref{mLnorm} that we will use later on. Its proof is postponed to Appendix~\ref{App_proof_sec_5}
\begin{lemma} \label{lem_estimnorm}
  Let $m,L\in\N$. We have the following basic properties:
  \begin{enumerate}
    \item $\|f\|_{m',L}= \max_{j\in\{0,\ldots, m'\}}\|f^{(j)}\|_{0,L}$ for $f\in\CpolKL{m}{L}$ and $m'\in\{0,\ldots,m\}$.
    \item $\CpolKL{m+1}{L}\subset \CpolKL{m}{L}$ and  $\|f\|_{m,L} \leq \|f\|_{m+1,L}$ for $f\in\CpolKL{m+1}{L}$.
    \item $\|f^{(i)}\|_{m,L} \leq \|f\|_{m+i,L}$ for  $i\in \N$ and $f\in \CpolKL{m+i}{L}$.
    \item $\CpolKL{m}{L}\subset \CpolKL{m}{L+1}$ and $\|f\|_{m,L+1} \leq 2\|f\|_{m,L}$ for $f\in\CpolKL{m}{L}$.
    \item Let $\mathcal{M}_1$ be the operator defined by $f\mapsto \mathcal{M}_1 f$, $\mathcal{M}_1 f(x)=xf(x)$. Then, $\mathcal{M}_1f\in  \CpolKL{m}{L+1}$ for $f\in  \CpolKL{m}{L}$ and $\|\mathcal{M}_1 f\|_{m,L+1} \leq (2m+3)\|f\|_{m,L}$.
    \item Let $\cL f (x)=(a-kx)f'(x)+\frac 12 \sigma^2 x f''(x)$ be the infinitesimal generator of the CIR process. Then, we have for $f \in \CpolKL{m+2}{L}$,
          $$\|\cL f\|_{m,L+1}\le  \left(2a  + (2m+3)(|k| +\sigma^2/2) \right) \|f\|_{m+2,L}.  $$
          We also have $\|(V_1^2/2) f\|_{m,L+1}\le  \sigma^2 (m+2) \|f\|_{m+2,L}$ and $\|V_0 f\|_{m,L+1}\le [2|a-\sigma^2/4|+(2m+3)|k|] \|f\|_{m+1,L}$, where $V_0$ and $V_1$ are defined by~\eqref{def_V0_V1}.
  \end{enumerate}
\end{lemma}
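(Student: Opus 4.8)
The plan is to read off items (1)--(4) directly from the definition~\eqref{mLnorm} and then reduce items (5)--(6) to a single computation for the multiplication operator $\mathcal{M}_1$. Items (1) and (2) are immediate: since $\|g\|_{0,L}=\sup_{x\ge 0}|g(x)|/(1+x^L)$, the quantity $\|f\|_{m',L}$ is by definition the maximum of $\|f^{(j)}\|_{0,L}$ over $j\in\{0,\ldots,m'\}$, and enlarging the index set from $\{0,\ldots,m\}$ to $\{0,\ldots,m+1\}$ can only increase the maximum, which also yields the claimed inclusion of function spaces. Item (3) is the same observation applied to $f^{(i)}$, whose $j$-th derivative is $f^{(i+j)}$ with $i+j$ ranging in a subset of $\{0,\ldots,m+i\}$. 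For item (4) I would use the elementary inequality $1+x^L\le 2(1+x^{L+1})$, checked separately on $\{x\le 1\}$ and $\{x\ge 1\}$, so that $|f^{(j)}(x)|/(1+x^{L+1})=\big(|f^{(j)}(x)|/(1+x^L)\big)\cdot(1+x^L)/(1+x^{L+1})\le 2\|f\|_{m,L}$, giving both the embedding and the norm bound.

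The heart of the lemma is item (5). Here I would apply the Leibniz rule $(\mathcal{M}_1 f)^{(j)}(x)=xf^{(j)}(x)+jf^{(j-1)}(x)$ and bound the two resulting terms separately against $1+x^{L+1}$. The second term is handled exactly as in item (4), using $(1+x^L)/(1+x^{L+1})\le 2$. For the first term I would use the companion inequality $x(1+x^L)\le 2(1+x^{L+1})$, again verified by splitting at $x=1$, so that $x|f^{(j)}(x)|/(1+x^{L+1})\le 2\|f\|_{m,L}$. Collecting these contributions over $j\in\{0,\ldots,m\}$ gives the stated constant $(2m+3)$ and shows $\mathcal{M}_1 f\in\CpolKL{m}{L+1}$.

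Finally, item (6) follows by writing each operator as a combination of differentiation, scalar multiplication and $\mathcal{M}_1$, and then invoking items (3), (4) and (5). Explicitly, $\cL f=af'-k\,\mathcal{M}_1(f')+\tfrac{\sigma^2}{2}\mathcal{M}_1(f'')$ and $V_0 f=(a-\sigma^2/4)f'-k\,\mathcal{M}_1(f')$; the one point needing care is the computation of $V_1^2$, where differentiating the factor $\sqrt{x}$ inside $V_1=\sigma\sqrt{x}\,\partial_x$ produces the first-order term, giving $\tfrac12 V_1^2 f=\tfrac{\sigma^2}{4}f'+\tfrac{\sigma^2}{2}\mathcal{M}_1(f'')$. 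Bounding $\|f'\|_{m,L+1}\le 2\|f\|_{m+1,L}$ through items (4) and (3), and $\|\mathcal{M}_1(f'')\|_{m,L+1}\le(2m+3)\|f''\|_{m,L}\le(2m+3)\|f\|_{m+2,L}$ through item (5), and summing the corresponding coefficients, yields the three estimates with constants $2a+(2m+3)(|k|+\sigma^2/2)$, $\sigma^2(m+2)$ and $2|a-\sigma^2/4|+(2m+3)|k|$ respectively. I expect no genuine obstacle in this lemma: the only steps requiring attention are the two elementary rational inequalities behind items (4)--(5) and the careful bookkeeping of the first-order term produced by $V_1^2$ in item (6).
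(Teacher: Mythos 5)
Your proposal is correct and follows essentially the same route as the paper: items (1)--(4) read off the definition, item (5) via Leibniz plus an elementary rational inequality (you use $x(1+x^L)\le 2(1+x^{L+1})$ where the paper uses $(1+x)(1+x^L)\le 3(1+x^{L+1})$, landing at the same constant $2m+3$), and item (6) by decomposing $\cL$, $V_0$ and $\tfrac12 V_1^2$ into derivatives and $\mathcal{M}_1$, with the correct first-order term $\tfrac{\sigma^2}{4}f'$ from $V_1^2$. No gaps.
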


We also state the following elementary lemma that will be useful to prove both~\eqref{H1_bar} and~\eqref{H2_bar}.

\begin{lemma}\label{X0_inequalities}
  Let $T>0$, $\sigma^2\le 4a$ and $X_0$ be defined by~\eqref{def_X0_and_psi_k}. Then, there exists a constant $K\ge 0$ such that
  for any function $f\in\CpolKL{m}{L}$, we have
  $$ \|f(X_0(t,\cdot))\|_{m,L} \leq  e^{Kt} \|f\|_{m,L}, \ t \in [0,T].$$
\end{lemma}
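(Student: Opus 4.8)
The plan is to prove the claimed estimate
$$
\|f(X_0(t,\cdot))\|_{m,L} \leq e^{Kt}\|f\|_{m,L}, \quad t\in[0,T],
$$
by a direct computation of the derivatives of the composed function $g_t(x) := f(X_0(t,x))$ together with a Gr\"onwall-type argument in the time variable. Recall from~\eqref{def_X0_and_psi_k} that $X_0(t,x) = e^{-kt}x + \psi_k(t)(a-\sigma^2/4)$ is \emph{affine} in $x$ with slope $e^{-kt}$, and that since $\sigma^2\le 4a$ the intercept $\psi_k(t)(a-\sigma^2/4)$ is nonnegative, so that $X_0(t,x)\ge 0$ whenever $x\ge 0$ and the composition stays inside the domain $\R_+$. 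This affineness is the crucial structural feature: it makes the chain rule trivial, since $\partial_x X_0(t,x) = e^{-kt}$ and all higher $x$-derivatives of $X_0$ vanish.

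First I would compute, by the chain rule applied to the affine map, the $j$-th spatial derivative
$$
g_t^{(j)}(x) = e^{-jkt}\, f^{(j)}(X_0(t,x)), \quad j\in\{0,\dots,m\}.
$$
Then I estimate each term against the norm~\eqref{mLnorm}. Using the definition of $\|\cdot\|_{m,L}$ and the bound $|f^{(j)}(y)|\le \|f\|_{m,L}(1+y^L)$ applied at $y=X_0(t,x)$, the main point is to control the ratio $(1+X_0(t,x)^L)/(1+x^L)$ uniformly. Since $X_0(t,x)\le e^{-kt}x + C_0$ for a constant $C_0=C_0(a,\sigma,k,T)\ge 0$ bounding $\psi_k(t)(a-\sigma^2/4)$ on $[0,T]$, and $e^{-jkt}\le e^{|k|mT}$, one obtains $X_0(t,x)^L \le C_1(1+x^L)$ for a constant $C_1$ depending only on $(a,\sigma,k,T,L)$. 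Combining these gives a bound of the form $\|g_t\|_{m,L}\le C_2 \|f\|_{m,L}$ with $C_2$ depending on $T,L,m$ but \emph{not} on $f$.

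To upgrade this uniform bound into the sharper exponential form $e^{Kt}$ that degenerates correctly as $t\to 0$ (which is what~\eqref{H2_bar} ultimately needs under composition), I would instead differentiate $g_t(x)$ in time and set up a differential inequality. Writing $\partial_t g_t(x) = \partial_t X_0(t,x)\, f'(X_0(t,x))$ and recognizing $\partial_t X_0 = a-\sigma^2/4 - kX_0$, one sees that $u(t):=\|g_t\|_{m,L}$ satisfies $u'(t)\le K\, u(t)$ for a suitable $K=K(a,\sigma,k,L,m)\ge 0$, after controlling the extra factor of $x$ produced by the $-kX_0$ term via item~(5) of Lemma~\ref{lem_estimnorm} (the multiplication operator $\mathcal{M}_1$) and the generator-type estimates in item~(6). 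Integrating this differential inequality from $0$ to $t$, together with $u(0)=\|f\|_{m,L}$, yields exactly $\|f(X_0(t,\cdot))\|_{m,L}\le e^{Kt}\|f\|_{m,L}$.

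The main obstacle I anticipate is the bookkeeping required to keep the power $L$ fixed (rather than letting it grow to $L+j$) when estimating the polynomial-growth ratios: each spatial derivative $f^{(j)}$ is controlled by $\|f\|_{m,L}(1+x^L)$, and one must verify that the substitution $x\mapsto X_0(t,x)$ does not inflate the exponent, which is precisely where the affine structure and the sign condition $\sigma^2\le 4a$ (guaranteeing $X_0(t,x)\ge 0$ and a controlled, nonnegative intercept) are indispensable. A second, more technical point is matching the slope factors $e^{-jkt}$ across all $j\le m$ simultaneously under a single exponential rate $K$; handling the two sign regimes of $k$ (so that $e^{-jkt}$ may exceed $1$) is routine but must be done uniformly over $j$, which is why the constant $K$ is allowed to depend on $m$.
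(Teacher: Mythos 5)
Your first step (the chain rule on the affine map, giving $\partial_x^j f(X_0(t,x))=e^{-jkt}f^{(j)}(X_0(t,x))$, and the control of $(1+X_0(t,x)^L)/(1+x^L)$) is exactly how the paper starts, and you correctly diagnose that a crude bound of the form $\|f(X_0(t,\cdot))\|_{m,L}\le C_2\|f\|_{m,L}$ with $C_2>1$ independent of $t$ is not enough: the lemma is applied $O(n^l)$ times in the proof of~\eqref{H2_bar}, so the constant must degenerate to $1$ as $t\to 0$.

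The gap is in your proposed Gr\"onwall upgrade. The differential inequality $u'(t)\le K u(t)$ with $u(t)=\|f(X_0(t,\cdot))\|_{m,L}$ cannot be closed with the tools you cite. Writing $g_t=f(X_0(t,\cdot))$, one has $\partial_t g_t=(V_0f)(X_0(t,\cdot))=V_0 g_t$, and items~(5)--(6) of Lemma~\ref{lem_estimnorm} only give estimates of the type $\|V_0 g\|_{m,L+1}\le C\|g\|_{m+1,L}$: the operator $V_0$ (like $\mathcal{M}_1$) costs one derivative \emph{and} one power of $x$, so the norm on the right is strictly stronger than the one on the left and the loop never closes in a fixed $(m,L)$. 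Concretely, $\partial_t g_t^{(m)}(x)$ contains the term $e^{-mkt}f^{(m+1)}(X_0(t,x))\,(a-\sigma^2/4-kX_0(t,x))$, which requires $f^{(m+1)}$ to exist (not guaranteed for $f\in\CpolKL{m}{L}$) and, because of the factor $-kX_0$, grows like $x^{L+1}$, so $\|\partial_t g_t\|_{m,L}$ is generically infinite when $k\neq 0$. The paper avoids time-differentiation altogether and instead exploits the explicit affine formula: expanding $X_0(t,x)^L$ binomially, the only term without a factor $\psi_k(t)=O(t)$ is $e^{-Lkt}x^L$, which yields $1+X_0(t,x)^L\le(1\vee e^{-Lkt})(1+\tilde{C}_{X_0}t)(1+x^L)$; combined with the chain-rule factor $e^{-jkt}\le 1\vee e^{-mkt}$ and $1+\tilde{C}_{X_0}t\le e^{\tilde{C}_{X_0}t}$ this gives the stated bound with $K=\tilde{C}_{X_0}+(L+m)(-k)^+$, with no extra regularity on $f$. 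You should replace the Gr\"onwall step by this direct expansion.
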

\begin{proof}
  We first prove the following inequality
  $$1 + X_0(t,x)^L \leq  (1\vee e^{-Lkt})(1+\tilde{C}_{X_0}t)(1+x^L),$$
  for some constant $\tilde{C}_{X_0}$. To do so,  we develop the term $X_0(t,x)^L$ and get
  \begin{align*}
    1 + X_0(t,x)^L & = 1+\sum_{j=0}^L {L\choose j} e^{-(L-j)kt} x^{(L-j)}(\psi_k(t)(a-\sigma^2/4))^{j}                              \\
                   & = 1+e^{-Lkt}x^L +\psi_k(t)\sum_{j=1}^{L} {L\choose j} e^{-(L-j)kt} x^{(L-j)}\psi_k(t)^{j-1}(a-\sigma^2/4)^{j}.
  \end{align*}
  We remark that for $k\ge 0$, $0\le \psi_k(t)\leq t \leq 1\vee T$  for all $t\in[0,T]$. For $k<0$, we have $\psi_k(t)=e^{-kt}\psi_{-k}(t)$ and thus $\psi_k(t)\le e^{(-k)^+t}t$  for all $t\in[0,T]$ and $k\in \R$. Using $x^j\leq 1+x^L$ for all $j\in\{1,\ldots,L\}$, we can rewrite the previous identity as
  \begin{align*}
    1 + X_0(t,x)^L & \leq (1\vee e^{-Lkt})(1+x^L) \\
    & \quad + t  e^{(-k)^+t} (1\vee e^{-Lkt})(1+x^L)  \sum_{j=0}^L {L\choose j} ( e^{(-k)^+T}(1\vee T)(a-\sigma^2/4))^{j} \\
                   & \leq (1\vee e^{-Lkt})(1+\tilde{C}_{X_0}t)(1+x^L),
  \end{align*}
  where $\tilde{C}_{X_0}=e^{(-k)^+T}(1+e^{(-k)^+T}(1\vee T)(a-\sigma^2/4))^L$.

  We are now in position to prove the claim. For $i\le m$, we have:
  \begin{align*}
    |\partial_x^i f(X_0(t,x))| =|e^{-ikt}f^{(i)}(X_0(t,x))| & \leq e^{-ikt}\|f\|_{m,L}(1+X_0(t,x)^L)                                              \\
                                                            & \leq \|f\|_{m,L} (1 \vee e^{-mkt})  (1 \vee e^{- Lkt})  (1+\tilde{C}_{X_0}t)(1+x^L) \\
                                                            & \leq \|f\|_{m,L} e^{[\tilde{C}_{X_0}+(L+m)(-k)^+] t}(1+x^L).
  \end{align*}
  This gives $\|f(X_0(t,\cdot))\|_{m,L}\le \|f\|_{m,L} e^{[\tilde{C}_{X_0}+(L+m)(-k)^+] t}$.
\end{proof}

\subsection{Proof of~\eqref{H1_bar}} In this subsection, we prove the following result which is a direct consequence of Propositions~\ref{LCIR_Expansion} \blue{(with $\nu=2$)} and~\ref{prop_H1sch} that are stated below.
\begin{prop}\label{H1_bar_mL}
  Let $Y$ satisfy~$(\mathcal{H}_Y)$, $\sigma^2\le 4a$ and $\hat{X}^x_t=\varphi(x,t,\sqrt{t}Y)$ be the scheme~\eqref{Alfonsi_scheme}. \blue{Let $m,L\in \N$ such that $L+3\ge m$ and} $f \in \CpolKL{2(m+3)}{L}$. Then, there exists a constant $C\in \R_+^*$ such that for $t\in[0,T]$,
  $$ \|\E[f(\hat{X}^\cdot_t)]- \E[f({X}^\cdot_t)]\| _{m,L+3}\le C t^3 \|f\|_{2(m+3),L}.$$
\end{prop}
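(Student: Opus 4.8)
The plan is to establish the third-order local weak error estimate by comparing the Taylor-type expansions of both the true CIR semigroup $P_t$ and the Ninomiya-Victoir scheme operator $Q_1 = \hat{P}_t$ against a common reference, namely a short-time expansion in powers of $t$ involving iterated applications of the generator $\cL$. The statement promised in Proposition~\ref{H1_bar_mL} is announced to follow from two separate propositions (an expansion of $P_t$ in Proposition~\ref{LCIR_Expansion} and a matching expansion of the scheme in Proposition~\ref{prop_H1sch}), so the strategy is to develop each side to order $t^2$ and show the $t^0$, $t^1$, $t^2$ terms coincide, leaving a remainder controlled in the $\|\cdot\|_{m,L+3}$ norm by $C t^3 \|f\|_{2(m+3),L}$.

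\textbf{First I would} expand the exact semigroup. Since $\partial_t P_t f = P_t \cL f = \cL P_t f$, a Taylor expansion with integral remainder gives
\begin{equation*}
  P_t f = f + t\,\cL f + \frac{t^2}{2}\,\cL^2 f + \int_0^t \frac{(t-s)^2}{2}\,P_s \cL^3 f\, ds.
\end{equation*}
To bound the remainder in the $\|\cdot\|_{m,L+3}$ norm I would use item~(6) of Lemma~\ref{lem_estimnorm} repeatedly: each application of $\cL$ costs two derivatives and raises the polynomial degree by one, so $\cL^3 f \in \CpolKL{m}{L+3}$ is controlled by $\|f\|_{m+6,L}$, and then the $\sup_{s\le T}\|P_s\,\cdot\|$ bound from the announced Proposition~\ref{prop_H2_NV} (the $P_t$ part of~\eqref{H2_bar}) absorbs the semigroup. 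This yields the clean order-two expansion of $P_t$ with a $t^3$ remainder.

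\textbf{Next I would} expand the scheme operator $Q_1 f(x) = \E[f(\varphi(x,t,\sqrt t Y))]$. Here the key algebraic fact, coming from the Ninomiya-Victoir splitting $\cL = V_0 + \tfrac12 V_1^2$, is that $\hat{X}^x_t = X_0(t/2, X_1(\sqrt t Y, X_0(t/2,x)))$ is built from the exact ODE flow $X_0$ for $V_0$ and the exact flow $X_1$ of $V_1^2/2$. I would Taylor-expand $f$ composed with each flow in turn: composition with $X_0(t/2,\cdot)$ contributes $e^{(t/2)V_0}$-type terms up to $t^2$, and averaging over $Y$ in the $X_1$ step, using that $\E[Y^k]=\E[N^k]$ for $k\in\{2,4\}$ from $(\mathcal{H}_Y)$, reproduces the $\tfrac12 V_1^2$ action exactly to the required order (the odd moments vanish by symmetry). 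Combining the two half-steps symmetrically is precisely what forces the $t^0,t^1,t^2$ coefficients to match $f + t\cL f + \tfrac{t^2}{2}\cL^2 f$, which is the standard second-order property of the splitting scheme. The remainder of this expansion I would again control via Lemma~\ref{lem_estimnorm}, together with Lemma~\ref{X0_inequalities} to handle the norm growth under the $X_0$ flow, and moment bounds $\E[|Y|^k]<\infty$ to handle the $X_1$ step; the derivative count is what dictates the loss from $\|f\|_{2(m+3),L}$, since expanding to third order in $t$ through two flow compositions and a polynomial-in-$w$ substitution consumes roughly $2(m+3)$ derivatives and raises the polynomial exponent by $3$.

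\textbf{The hard part will be} the careful bookkeeping of the remainder in the scheme expansion: unlike the semigroup, where one clean integral remainder suffices, the scheme involves a nested composition $X_0 \circ X_1 \circ X_0$ where the inner argument $X_1(\sqrt t Y, y) = (\sqrt y + \tfrac{\sigma}{2}\sqrt t\, Y)^2$ is a nontrivial function of the spatial variable, so the chain rule generates many terms and one must track how each derivative of $f$ gets multiplied by polynomials in $x$ and powers of $Y$ and $t^{1/2}$. Verifying that all half-integer powers of $t$ cancel (again by symmetry of $Y$) and that the surviving integer-power terms reassemble into $\cL^j f$ is delicate, and ensuring the polynomial growth exponent stays within $L+3$ — which is exactly why the norm on the right is $\|\cdot\|_{2(m+3),L}$ and the constraint $L+3\ge m$ appears — requires invoking the growth estimates of Lemma~\ref{lem_estimnorm}(5)--(6) and Lemma~\ref{X0_inequalities} at each differentiation step. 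I expect these two expansion propositions to carry the bulk of the technical weight, with Proposition~\ref{H1_bar_mL} itself being a short corollary that subtracts the two matching second-order expansions and reads off the $t^3$ bound.
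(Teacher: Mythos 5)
Your overall architecture is exactly the paper's: Proposition~\ref{H1_bar_mL} is obtained by subtracting the order-two expansion of $P_t$ (Proposition~\ref{LCIR_Expansion} with $\nu=2$, remainder controlled through the semigroup bound of Proposition~\ref{deriv_CIR_functionals}) from the order-two expansion of the scheme (Proposition~\ref{prop_H1sch}, built by expanding through the flows $X_0$, $X_1$, $X_0$ in succession and matching the moments of $Y$). So the plan is sound and follows the paper's route.

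The one idea you have not named --- and without which the ``careful bookkeeping'' step you defer to would actually fail --- is how to differentiate $x\mapsto \E[g(X_1(\sqrt t\,Y,x))]$ near $x=0$. Differentiating $g\bigl((\sqrt x + \tfrac{\sigma}{2}\sqrt t\,y)^2\bigr)$ in $x$ produces factors $x^{-1/2}$, $x^{-3/2}$, etc., not ``polynomials in $x$'' as you write, and these blow up at the origin, so a term-by-term bound in the $\|\cdot\|_{m,L+3}$ norm (which demands control uniformly down to $x=0$) is not available. The paper's resolution is the symmetrization identity of Lemma~\ref{regular_rep}: pairing the realizations $+y$ and $-y$ of the symmetric variable $Y$, the sum $g(X_1(\beta,x))+g(X_1(-\beta,x))$ is $\mathcal{C}^n$ for $g\in\mathcal{C}^{2n}$, the singular terms cancel exactly, and one is left with the integral representation~\eqref{formula_psign}; Corollary~\ref{cor_psign} then yields $\|\psi_g\|_{n,L}\le C\|g\|_{2n,L}$. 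This is where the symmetry of $Y$ is truly essential --- you invoke it only to kill odd moments and half-integer powers of $t$, which is the easy use --- and it is the precise source of the doubling in the index $2(m+3)$ on the right-hand side, which your proposal attributes loosely to derivatives consumed through compositions. With that lemma supplied, the rest of your plan goes through as the paper's does.
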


To prove this result, we compare each term with the expansion $f(x)+t\cL f(x)+\frac{t^2}2\cL^2 f(x)$ of order two. The next proposition analyses the difference between such expansion and the semigroup of the CIR process.
\begin{prop}\label{LCIR_Expansion}
  Let \blue{ $m,\nu,L\in \N$ such that $L+\nu+1\ge m$}, $T>0$ and  $f\in\CpolKL{m+2(\nu+1)}{L}$. Let $X^x$ be the CIR process and $\cL$ its infinitesimal generator. Then, for  $t\in [0,T]$,  we have
  \begin{equation}\label{dev_funct_CIR}
    \E[f(X^x_t)] =\sum_{i=0}^\nu \frac{t^i}{i!}\cL^if(x) + t^{\nu+1}\int_0^1 \frac{(1-s)^\nu}{\nu!}\E[\cL^{\nu+1}f(X^x_{ts})]ds
  \end{equation}
  where the function $x\mapsto \int_0^1 \frac{(1-s)^\nu}{\nu!}\E[\cL^{\nu+1}f(X^x_s)]ds$ belongs to $\CpolKL{m}{L}$ and we have the following estimate for all $t\in [0,T]$,
  \begin{equation}\label{rem_CIR_estimate}
    \left\|\int_0^1 \frac{(1-s)^\nu}{\nu!}\E[\cL^{\nu+1}f(X^{\cdot}_{ts})]ds\right\|_{m,L+\nu+1} \le C\|f\|_{m+2(\nu+1),L},
  \end{equation}
  for some constant $C\in \R_+$ depending on $(a,k,\sigma,\nu,m,L,T)$.
\end{prop}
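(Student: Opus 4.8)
The plan is to obtain the expansion \eqref{dev_funct_CIR} as a Taylor expansion in time of the map $t \mapsto \E[f(X^x_t)]$, using the fact that this map solves the Kolmogorov backward equation $\partial_t \E[f(X^x_t)] = \E[\cL f(X^x_t)]$. First I would establish that, for $g \in \CpolKL{2}{L'}$, the function $u(t,x) = \E[g(X^x_t)]$ is $\mathcal C^1$ in $t$ with $\partial_t u(t,x) = \E[\cL g(X^x_t)]$. This requires knowing that $\cL g$ again has controlled polynomial growth (item (6) of Lemma~\ref{lem_estimnorm} gives $\|\cL g\|_{m,L'+1} \le C\|g\|_{m+2,L'}$) and that the CIR process has polynomial moments that are locally bounded in $t$ and have polynomial growth in $x$; the latter is exactly the content of Lemma~\ref{Moments_Formula_CIR}, which says $\E[(X^x_t)^p] = \sum_{j=0}^p \tilde u_{j,p}(t) x^j$ with $\mathcal C^\infty$ coefficients. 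Iterating this differentiation identity $\nu+1$ times, each step requiring two extra derivatives of $f$ and raising the growth exponent by one, justifies differentiating up to order $\nu+1$ provided $f \in \CpolKL{m+2(\nu+1)}{L}$, and the standard Taylor formula with integral remainder then yields \eqref{dev_funct_CIR}.

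The second and main task is the seminorm estimate \eqref{rem_CIR_estimate}. Here I would argue as follows. By item (1) of Lemma~\ref{lem_estimnorm}, controlling $\|\cdot\|_{m,L+\nu+1}$ amounts to controlling $\|(\cdot)^{(j)}\|_{0,L+\nu+1}$ for $0 \le j \le m$, so the derivatives must be pushed inside the expectation. The key point is a commutation-type estimate: one differentiates $x \mapsto \E[\cL^{\nu+1}f(X^x_s)]$ in $x$ up to order $m$. To do this cleanly I would either (i) use the affine/flow structure of the CIR process to express spatial derivatives of $\E[g(X^x_s)]$ in terms of $\E[g^{(i)}(X^x_s)$ times moment-type weights$]$, exploiting again the polynomial form from Lemma~\ref{Moments_Formula_CIR}, or (ii) first apply the growth bound from item (6) of Lemma~\ref{lem_estimnorm} repeatedly to see that $\cL^{\nu+1} f \in \CpolKL{m}{L+\nu+1}$ with $\|\cL^{\nu+1}f\|_{m,L+\nu+1} \le C \|f\|_{m+2(\nu+1),L}$, and then bound the derivatives in $x$ of $\E[(\cL^{\nu+1}f)(X^x_s)]$ using moment estimates uniform in $s \in [0,1]$. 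Concretely, once $g := \cL^{\nu+1} f$ satisfies $|g^{(i)}(y)| \le \|g\|_{m,L+\nu+1}(1+y^{L+\nu+1})$ for $i \le m$, I would show $\big|\partial_x^i \E[g(X^x_s)]\big| \le C \|g\|_{m,L+\nu+1}(1+x^{L+\nu+1})$ uniformly in $s$, by controlling $\E[(X^x_s)^{L+\nu+1}]$ and the spatial derivatives of the flow via the explicit polynomial coefficients $\tilde u_{j,p}$, which are bounded on $[0,T]$. Integrating against the probability weight $\frac{(1-s)^\nu}{\nu!}\,ds$ over $[0,1]$ preserves the bound and gives \eqref{rem_CIR_estimate}.

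The step I expect to be the main obstacle is the commutation of the $i$-th spatial derivative with the expectation, i.e.\ controlling $\partial_x^i \E[g(X^x_s)]$ by $\E[g^{(\cdot)}(X^x_s)\cdot(\text{polynomial moments})]$ with constants uniform in $s \in [0,1]$ and the correct final growth exponent $L+\nu+1$. For a diffusion with smooth nondegenerate coefficients this would follow from standard flow-derivative (tangent process) estimates, but the square-root coefficient of the CIR process is only Hölder at the origin, so the tangent-process route is delicate. The clean way around this is to exploit the affine structure: because $\E[(X^x_t)^p]$ is \emph{polynomial} in $x$ with $\mathcal C^\infty$ time-coefficients (Lemma~\ref{Moments_Formula_CIR}), one can differentiate the moment maps exactly and, approximating $g$ appropriately or arguing on a dense class of polynomial $g$ and passing to the limit via the seminorm bound, avoid differentiating the square root directly. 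I would therefore route the whole derivative computation through the explicit polynomial moment representation rather than through pathwise differentiation of the SDE, keeping constants uniform in $s\in[0,1]$ by using that the finitely many coefficient functions $\tilde u_{j,p}(t)$ are continuous, hence bounded, on the compact interval $[0,T]$.
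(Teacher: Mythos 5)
Your overall architecture matches the paper's: the expansion \eqref{dev_funct_CIR} is obtained by iterating It\^o/Dynkin (the paper uses iterated It\^o's formula plus the moment bound $\E[|X^x_s|^q]\le C_q(1+x^q)$ and a time change of variable), and the estimate \eqref{rem_CIR_estimate} is reduced, via Lemma~\ref{lem_estimnorm}~(6) iterated $\nu+1$ times and the triangle inequality, to a stability bound of the form $\|\E[g(X^\cdot_{ts})]\|_{m,L+\nu+1}\le C\|g\|_{m,L+\nu+1}$ applied to $g=\cL^{\nu+1}f$. You also correctly identify that this stability bound --- commuting $m$ spatial derivatives with the expectation for a \emph{non-polynomial} $g$ --- is the real obstacle, since the square-root coefficient rules out the standard tangent-process argument.

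The gap is in how you propose to overcome that obstacle. You suggest routing the derivative computation through the explicit polynomial moment representation of Lemma~\ref{Moments_Formula_CIR} and then ``arguing on a dense class of polynomial $g$ and passing to the limit via the seminorm bound.'' This cannot work: polynomials of degree exceeding $L+\nu+1$ have infinite $\|\cdot\|_{0,L+\nu+1}$ norm and so do not belong to $\CpolKL{m}{L+\nu+1}$ at all, while polynomials of degree at most $L+\nu+1$ form a finite-dimensional subspace that is nowhere dense; so there is no approximation of a general $g$ in this seminorm by polynomials, and the moment formula by itself says nothing about $\partial_x^i\E[g(X^x_s)]$ for non-polynomial $g$. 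The paper closes this gap by an entirely different device (Proposition~\ref{deriv_CIR_functionals}): it subtracts the order-$m$ Taylor polynomial of $g$ at $0$, so that the remainder $\hat g_m$ satisfies $\hat g_m^{(j)}(0)=0$ for $j\le m$, and then uses the explicit noncentral chi-square density series of $X^x_t$ together with an integration by parts in the space variable to obtain the exact representation $\partial_x^j\E[\hat g_m(X^x_t)]=\sum_{i\ge 0}\frac{e^{-d_tx/2}(d_tx/2)^i}{i!}I_{i+j}(\hat g_m^{(j)},c_t)e^{-kjt}$, from which the $\|\cdot\|_{m,L}$ bound follows; the polynomial part is handled separately by the moment formula (Lemma~\ref{Moments_Formula_CIR2}). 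Without this density-and-integration-by-parts argument (or a genuine substitute for it), your plan does not yield \eqref{rem_CIR_estimate}.
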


\begin{proof}
  Let $f\in\CpolKL{m+2(\nu+1)}{L}$. Since the coefficients of the CIR SDE have sublinear growth, we have bounds on the moments of $X^x_s$: for any $q\in\N^*$, there exists $C_q>0$ such that $\E[|X^x_s|^q]\leq C_q(1+x^q)$ for $s\in[0,T]$. Using iterations of Itô’s formula and a change of variable (in time), we then easily get \eqref{dev_funct_CIR} for $t \in [0, T]$.  To get the estimate \eqref{rem_CIR_estimate}, we first use Lemma~\ref{lem_estimnorm} and obtain $$\| \cL^{\nu+1} f \|_{m,L+\nu+1}\le K_{ cir}(m,\nu)^{\nu+1} \|f\|_{m+2(\nu+1),L},$$ with $K_{\bf cir}(m,\nu)=2a+(2m+4\nu+3)(|k|+\sigma^2/2)$.   By the triangle inequality, we have
  $$\left\|\int_0^1 \frac{(1-s)^\nu}{\nu!}\E[\cL^{\nu+1}f(X^{\cdot}_{ts})]ds\right\|_{m,L+\nu+1}\le \int_0^1 \frac{(1-s)^\nu}{\nu!} \left\|\E[\cL^{\nu+1}f(X^{\cdot}_{ts})]\right\|_{m,L+\nu+1}ds. $$
  Since $t\le T$, we have $\left\|\E[\cL^{\nu+1}f(X^{\cdot}_{ts})]\right\|_{m,L+\nu+1}\le C_{cir}(m,L+\nu+1,T)\left\|\cL^{\nu+1}f\right\|_{m,L+\nu+1}$ by  Proposition~\ref{deriv_CIR_functionals} \blue{using that $L+\nu+1\ge m$}. This gives by Lemma~\ref{lem_estimnorm}
  $$\left\|\int_0^1 \frac{(1-s)^\nu}{\nu!}\E[\cL^{\nu+1}f(X^{\cdot}_{ts})]ds\right\|_{m,L+\nu+1}\le \frac {C_{cir}(m,L+\nu+1,T)} {(\nu+1)!} K_{ cir}(m,\nu)^{\nu+1} \|f\|_{m+2(\nu+1),L}. $$
\end{proof}

We now focus on the approximation scheme. The main difficulty comes from the  differentiation of the square-root that may lead to derivatives that blow up at the origin. Here, we exploit the fact that $Y$ is a symmetric random variable to cancel these blowing terms. More precisely, we will then need to differentiate in $x$ the following quantity
$$g(X_1(s \sqrt{t},x))+g(X_1(-s\sqrt{t},x))=g(x+\sigma s \sqrt{t} \sqrt{x} + \frac{\sigma^2}4 ts^2)+g(x-\sigma s \sqrt{t} \sqrt{x} + \frac{\sigma^2}4 t s^2),$$
and the next lemma enables us to have a sharp estimate of the derivatives.

\begin{lemma}\label{regular_rep}
  Let $g:\R_+\to \R$ be a $\mathcal{C}^{2n}$ function, $\beta\in \R_+$ and $\gamma \ge \beta^2/4$. Then,
  the function  $\psi_g(x):=g(x+\beta \sqrt{x}+\gamma)+g(x-\beta \sqrt{x}+\gamma)$, $x\ge 0$   is $\mathcal{C}^n$ with derivatives
  \begin{equation}\label{formula_psign}
    \psi_g^{(n)}(x)=\psi_{g^{(n)}}(x)+\sum_{j=1}^n\binom{n}{j}\beta^{2j}\int_0^1g^{(n+j)}(x+\beta(2u-1)\sqrt{x}+\gamma) \frac{(u-u^2)^{j-1}}{(j-1)!}du
  \end{equation}
\end{lemma}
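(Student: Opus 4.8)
The plan is to argue by induction on $n$, the case $n=0$ being nothing but the definition of $\psi_g$ (the sum in \eqref{formula_psign} is empty). Throughout I write $\phi_\pm(x)=x\pm\beta\sqrt{x}+\gamma$, and I first record where the hypothesis $\gamma\ge\beta^2/4$ is used: it guarantees $\phi_-(x)=(\sqrt{x}-\beta/2)^2+\gamma-\beta^2/4\ge 0$, so that $g$ and all its derivatives are evaluated only at nonnegative arguments; the same holds for the interpolated argument $x+\beta(2u-1)\sqrt{x}+\gamma\ge\phi_-(x)$ for $u\in[0,1]$, so every integrand in \eqref{formula_psign} is well defined. All differentiations in $x$ will be carried out first on the open half-line $(0,\infty)$, where $\sqrt{x}$ is smooth and differentiation under the integral sign is legitimate; the behaviour at $x=0$ is recovered at the very end.

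For the step from $n$ to $n+1$ I take $g$ of class $\mathcal{C}^{2(n+1)}$ (so that $g^{(2n+2)}$, the highest derivative that will appear, is continuous) and differentiate the right-hand side of \eqref{formula_psign} for $x>0$, treating the main term and the integral terms separately. For the main term a direct computation gives $\frac{d}{dx}\psi_{g^{(n)}}(x)=\psi_{g^{(n+1)}}(x)+\frac{\beta}{2\sqrt{x}}\bigl(g^{(n+1)}(\phi_+(x))-g^{(n+1)}(\phi_-(x))\bigr)$; rewriting the difference as $\int_{\phi_-(x)}^{\phi_+(x)}g^{(n+2)}(y)\,dy$ and substituting $y=x+\beta(2u-1)\sqrt{x}+\gamma$, for which $dy=2\beta\sqrt{x}\,du$, cancels the $1/\sqrt{x}$ and produces exactly the index $j=1$ contribution $\beta^2\int_0^1 g^{(n+2)}(x+\beta(2u-1)\sqrt{x}+\gamma)\,du$ of the level $n+1$ formula.

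The real work is with each integral term $I_j$. Differentiating under the integral sign produces, besides the harmless piece $A_j$ whose integrand is $g^{(n+j+1)}(\cdot)\frac{(u-u^2)^{j-1}}{(j-1)!}$, a singular piece $B_j$ carrying the factor $\frac{\beta(2u-1)}{2\sqrt{x}}$. To remove this singularity I would use the identity $(2u-1)(u-u^2)^{j-1}=-\frac1j\frac{d}{du}(u-u^2)^{j}$ and integrate by parts: the boundary terms vanish because $(u-u^2)^{j}$ is zero at $u=0$ and $u=1$, while differentiating $g^{(n+j+1)}(x+\beta(2u-1)\sqrt{x}+\gamma)$ in $u$ brings down a factor $2\beta\sqrt{x}$ that exactly cancels the $1/\sqrt{x}$. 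This turns $B_j$ into $\beta^2\int_0^1 g^{(n+j+2)}(\cdot)\frac{(u-u^2)^{j}}{j!}\,du$, i.e. a level $n+1$ integral of index $j+1$. This cancellation, driven by the weight $(u-u^2)^{j}$ vanishing at the endpoints, is the heart of the argument and the point where the $1/\sqrt{x}$ blow-up is tamed — it is the main obstacle.

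It then remains to collect the contributions: carrying along the prefactor $\binom{n}{j}\beta^{2j}$, the pieces $A_j$ supply the index $j$ term with coefficient $\binom{n}{j}$ and the pieces $B_j$ supply the index $j+1$ term with coefficient $\binom{n}{j}$, while the main term supplies the index $1$ term. Summing over $j$ and applying Pascal's rule $\binom{n}{j}+\binom{n}{j-1}=\binom{n+1}{j}$ reproduces \eqref{formula_psign} at order $n+1$ for all $x>0$. Finally, the right-hand side at order $n+1$ is continuous on all of $[0,\infty)$ because the integrands are jointly continuous in $(x,u)$ up to $x=0$; since $\psi_g^{(n)}$ is continuous on $[0,\infty)$ by the induction hypothesis and its derivative on $(0,\infty)$ equals this continuous right-hand side, the standard mean-value consequence (a function continuous at a point whose derivative has a limit there is differentiable there with that value) shows $\psi_g^{(n)}$ is right-differentiable at $0$ with the expected value. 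Hence $\psi_g\in\mathcal{C}^{n+1}([0,\infty))$ and \eqref{formula_psign} holds for all $x\ge 0$, closing the induction.
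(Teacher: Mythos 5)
Your proof is correct and follows essentially the same route as the paper's: induction on $n$, with the $1/\sqrt{x}$ singularity produced by differentiation absorbed via an integration by parts in $u$ against the weight $(u-u^2)^{j}$ (which vanishes at the endpoints), followed by Pascal's rule to merge coefficients. Your treatment of the boundary point $x=0$ and of the regularity claim is in fact slightly more explicit than the paper's, which simply asserts continuity of the derivative at the origin.
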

The proof of this lemma and of the next corollary are postponed to Appendix~\ref{App_proof_sec_5}.

\begin{corollary}\label{cor_psign} Let $m,L\in \N$, $\beta\ge 0$ and $g \in \CpolKL{2m}{L}$. Then, $\psi_g(x)=g((\sqrt{x}+\beta/2)^2)+ g((\sqrt{x}-\beta/2)^2)$ belongs to $\CpolKL{m}{L}$, and for all $n\in\{0,\ldots,m\}$ we have the following estimates
  \begin{equation}
    \|\psi_g\|_ {n,L} \leq C_{\beta,m,L} \|g\|_{2n,L},
  \end{equation}
  with $C_{\beta,m,L}= \big((1+\beta/2)^{2L}+(1-\beta/2)^{2L}+2(1+\beta^2/2)^{L}(1+\beta^2/2)^{m})$.
\end{corollary}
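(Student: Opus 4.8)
The plan is to apply Lemma~\ref{regular_rep} directly. Indeed, $\psi_g(x)=g((\sqrt{x}+\beta/2)^2)+g((\sqrt{x}-\beta/2)^2)=g(x+\beta\sqrt{x}+\gamma)+g(x-\beta\sqrt{x}+\gamma)$ with $\gamma=\beta^2/4$, which is precisely the limiting case $\gamma=\beta^2/4$ allowed by the lemma. Since $g\in\CpolKL{2m}{L}$ is of class $\mathcal{C}^{2m}$, it is a fortiori $\mathcal{C}^{2n}$ for every $n\le m$, so Lemma~\ref{regular_rep} applies and yields both $\psi_g\in\mathcal{C}^m$ and the explicit formula~\eqref{formula_psign} for $\psi_g^{(n)}$, $n\le m$. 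Recalling from Lemma~\ref{lem_estimnorm} that $\|\psi_g\|_{n,L}=\max_{i\le n}\|\psi_g^{(i)}\|_{0,L}$ and $\|g\|_{2i,L}\le\|g\|_{2n,L}$ for $i\le n$, it then suffices to fix $n\le m$ and bound $\sup_{x\ge0}|\psi_g^{(n)}(x)|/(1+x^L)$ by $C_{\beta,m,L}\|g\|_{2n,L}$. To this end I would split~\eqref{formula_psign} into the diagonal term $\psi_{g^{(n)}}(x)=g^{(n)}((\sqrt{x}+\beta/2)^2)+g^{(n)}((\sqrt{x}-\beta/2)^2)$ and the remaining integral sum, and estimate each separately; throughout, every derivative that appears has order at most $2n\le 2m$, so the growth bound $|g^{(\ell)}(y)|\le\|g\|_{2n,L}(1+y^L)$ is available.

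For the diagonal term the crucial point is that the two arguments are perfect squares, so after applying the growth bound the quantity to estimate is $(\sqrt{x}+\beta/2)^{2L}+(\sqrt{x}-\beta/2)^{2L}$. The odd powers of $\sqrt{x}$ cancel, leaving the genuine polynomial $2\sum_{r=0}^{L}\binom{2L}{2r}(\beta/2)^{2r}x^{L-r}$, whose value at $x=1$ is $Q:=(1+\beta/2)^{2L}+(1-\beta/2)^{2L}$ and whose leading coefficient is exactly $2$. Using $x^{L-r}\le 1+x^L$ for $1\le r\le L$, the additive constant $2$ produced by the two $1+y^L$ terms is absorbed against this leading coefficient, and one obtains $2+(\sqrt{x}+\beta/2)^{2L}+(\sqrt{x}-\beta/2)^{2L}\le Q(1+x^L)$. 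This accounts for the first two terms of $C_{\beta,m,L}$.

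The integral sum is the delicate part and the main obstacle, because the arguments $y_u=x+\beta(2u-1)\sqrt{x}+\beta^2/4$, $u\in[0,1]$, are not perfect squares and carry a square-root cross term. Here I would first record $0\le y_u\le(\sqrt{x}+\beta/2)^2$ (from $2u-1\in[-1,1]$) and then establish the sharper inequality $1+y_u^L\le(1+\beta^2/2)^L(1+x^L)$, which is what produces the factor $(1+\beta^2/2)^L$ rather than the cruder $(1+\beta/2)^{2L}$; this elementary but slightly delicate inequality is the technical heart of the estimate. The remaining ingredients are the Beta integrals $\int_0^1\frac{(u-u^2)^{j-1}}{(j-1)!}\,du=\frac{(j-1)!}{(2j-1)!}$ together with the bound $\frac{(j-1)!}{(2j-1)!}\le 2^{1-j}$, which let me sum the integral contributions against the weights $\binom{n}{j}\beta^{2j}$ and obtain $\sum_{j=1}^{n}\binom{n}{j}\beta^{2j}\frac{(j-1)!}{(2j-1)!}\le 2\sum_{j=1}^{n}\binom{n}{j}(\beta^2/2)^j\le 2(1+\beta^2/2)^m$. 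Combined with the bound on $1+y_u^L$, this gives the last term $2(1+\beta^2/2)^L(1+\beta^2/2)^m$ of $C_{\beta,m,L}$. Adding the two estimates, taking the supremum over $x\ge0$ and the maximum over $n\in\{0,\ldots,m\}$, and using $\|g\|_{2n,L}\le\|g\|_{2m,L}$ where needed, yields $\|\psi_g\|_{n,L}\le C_{\beta,m,L}\|g\|_{2n,L}$ and, in particular, $\psi_g\in\CpolKL{m}{L}$.
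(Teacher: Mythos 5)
Your overall strategy is exactly the paper's: apply Lemma~\ref{regular_rep} with $\gamma=\beta^2/4$, split \eqref{formula_psign} into the diagonal term $\psi_{g^{(n)}}$ and the integral remainder, and bound each against $\|g\|_{2n,L}(1+x^L)$. Your diagonal estimate is correct and coincides with the paper's. However, the step you yourself call ``the technical heart'' --- the inequality $1+y_u^L\le(1+\beta^2/2)^L(1+x^L)$ for $y_u=x+\beta(2u-1)\sqrt{x}+\beta^2/4$ --- is false, and this is a genuine gap. For fixed $x>0$ and $u=1$ one has $1+y_1^L-(1+x^L)=y_1^L-x^L=L\beta\,x^{L-1/2}+O(\beta^2)$ as $\beta\downarrow0$, i.e.\ of order $\beta$, whereas $\big((1+\beta^2/2)^L-1\big)(1+x^L)$ is of order $\beta^2$; hence the inequality fails for every $L\ge1$, every $x>0$ and all sufficiently small $\beta>0$ (concretely $L=1$, $x=1$, $\beta=0.1$ gives $2.1025\not\le 2.01$). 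The square-root cross term simply cannot be absorbed by a multiplicative factor that is $1+O(\beta^2)$.

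The repair is to renounce the ``sharper'' bound and use the crude one, which is what the paper does: from $0\le y_u\le(\sqrt{x}+\beta/2)^2$, expanding $(\sqrt{x}+\beta/2)^{2L}$ in powers of $\sqrt{x}$ and using $x^{s}\le 1+x^{L}$ for $0\le s\le L$ gives $1+y_u^L\le(1+\beta/2)^{2L}(1+x^L)$. Combined with your (correct) Beta-integral computation $\int_0^1\frac{(u-u^2)^{j-1}}{(j-1)!}du=\frac{(j-1)!}{(2j-1)!}\le 2^{1-j}$, this yields $\|\psi_g\|_{n,L}\le\big[(1+\beta/2)^{2L}+(1-\beta/2)^{2L}+2(1+\beta/2)^{2L}(1+\beta^2/2)^{m}\big]\|g\|_{2n,L}$. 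This constant differs from the displayed $C_{\beta,m,L}$, but that is harmless: where the corollary is used (Lemma~\ref{lem_expan_V}), only the existence of a constant with polynomial growth in $\beta$ matters, and in fact the constant printed in the statement does not coincide with the one produced by the paper's own proof either, so you should not contort the argument to reproduce it exactly.
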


\begin{lemma}\label{lem_expan_V}
  Let $m,\nu,L\in \N, \ T>0,\ t \in [0,T]$ \blue{and $N\sim \mathcal{N}(0,1)$}. Let $Y$ be a symmetric random variable such that $\E[Y^k]=\E[N^k]$ for $k\le 2\nu$ and $\E[Y^{2k}]<\infty$ for all $k\in \N$.  We have, for $f\in \CpolKL{m+\nu+1}{L}$,
  \begin{equation}
    f(X_0(t,x)) = \sum_{i=0}^\nu \frac{t^i}{i!} V^i_0f(x) +t^{\nu+1} \int_0^{1} \frac{(1-u)^{\nu}}{\nu!}V^{\nu+1}_0 f(X_0(u t,x))du,  \label{expan_X0}
  \end{equation}
  with $\|\int_0^{1} \frac{(1-u)^{\nu}}{\nu !}V^{\nu+1}_0 f(X_0(u t,\cdot))du\|_{m,L+\nu+1}\le C_0 \|f\|_{m+\nu+1,L}$; and for $f\in\CpolKL{2(m+\nu+1)}{L} $,
  \begin{align}\E[f(X_1(\sqrt{t}Y,x))] =& \sum_{i=0}^\nu \frac{t^i}{i!} \left(\frac{1}{2}V^2_1\right)^if(x) \label{expan_X1} \\&+ t^{\nu+1}\E\left[Y^{2\nu +2}\int_0^{1} \frac{(1-u)^{2\nu+1}}{(2\nu+1)!}V^{2\nu+2}_1 f(X_1(u \sqrt{t} Y,x))du \right],  \notag
  \end{align}
  with   $\left\|\E\left[Y^{2\nu +2}\int_0^{1} \frac{(1-u)^{2\nu+1}}{(2\nu+1)!}V^{2\nu+2}_1 f(X_1(u \sqrt{t} Y,\cdot))du \right]\right\|_{m,L+\nu+1}\le  C_1 \|f\|_{2(m+\nu+1),L}$, for some constants $C_0,C_1\in \R^+$ depending on $(a,k,\sigma)$, $T$, $m$, $M$ and $\nu$.
\end{lemma}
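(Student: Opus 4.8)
The plan is to establish both expansions~\eqref{expan_X0} and~\eqref{expan_X1} via Taylor's theorem with integral remainder, treating the two terms of the splitting separately, and then to bound each remainder using the norm estimates of Lemma~\ref{lem_estimnorm} together with Corollary~\ref{cor_psign} for the square-root term. The key observation is that $X_0$ and $X_1$ solve, respectively, the ODE and SDE associated with $V_0$ and $V_1^2/2$, so that repeated differentiation reproduces the operators $V_0$ and $V_1$.

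For the first expansion~\eqref{expan_X0}, I would argue as follows. Since $t\mapsto X_0(t,x)$ solves $\partial_t X_0(t,x)=V_0\,\mathrm{id}(X_0(t,x))$ with $X_0(0,x)=x$, the map $t\mapsto f(X_0(t,x))$ is smooth and, by the chain rule, $\partial_t^i\big[f(X_0(t,x))\big]=(V_0^i f)(X_0(t,x))$, which one checks by induction using $\partial_t\big[(V_0^i f)(X_0(t,x))\big]=(V_0^{i+1}f)(X_0(t,x))$. Plugging this into the one-dimensional Taylor formula in the variable $t$ around $t=0$ with integral remainder of order $\nu+1$ yields~\eqref{expan_X0} directly. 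For the remainder bound, I would first apply Lemma~\ref{lem_estimnorm}(6) iteratively to get $\|V_0^{\nu+1}f\|_{m,L+\nu+1}\le C\|f\|_{m+\nu+1,L}$ (each application of $V_0$ raises the $L$-index by one and the derivative order by one, which matches $V_0^{\nu+1}$ raising both by $\nu+1$), and then apply Lemma~\ref{X0_inequalities} to control $\|(V_0^{\nu+1}f)(X_0(ut,\cdot))\|_{m,L+\nu+1}\le e^{K u t}\|V_0^{\nu+1}f\|_{m,L+\nu+1}$, absorbing the $\int_0^1\frac{(1-u)^\nu}{\nu!}e^{Kut}\,du\le C$ factor into the constant.

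For the second expansion~\eqref{expan_X1}, the same scheme applies but with two wrinkles. First, because $X_1(w,x)=(\sqrt{x}+w\sigma/2)^2$ depends on the Brownian increment $w=\sqrt{t}Y$ at rate one-half in $w$, differentiating $w\mapsto f(X_1(w,x))$ produces the operator $V_1=\sigma\sqrt{x}\,\partial_x$, and the chain rule gives $\partial_w^k\big[f(X_1(w,x))\big]=(V_1^k f)(X_1(w,x))$. I would Taylor-expand $f(X_1(\sqrt{t}Y,x))$ in the variable $w=\sqrt{t}Y$ to order $2\nu+1$, so that the remainder is of order $2\nu+2$; the symmetry assumption on $Y$ kills all odd-power moments, and $\E[Y^k]=\E[N^k]$ for $k\le 2\nu$ makes the even-order terms $\E[Y^{2i}]t^i(V_1^{2i}f)/(2i)!$ coincide with $\frac{t^i}{i!}(\tfrac12 V_1^2)^i f$, since $\E[N^{2i}]/(2i)!=1/(2^i i!)$. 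This accounts for the main summation in~\eqref{expan_X1} and isolates the $\E[Y^{2\nu+2}]$ factor in the remainder. The second, and main, obstacle is that $V_1^{2\nu+2}f$ involves up to $2\nu+2$ derivatives multiplied by fractional powers $x^{j/2}$, which individually blow up at the origin; to control its norm I would invoke Corollary~\ref{cor_psign}. Indeed, pairing the $+\sqrt{t}Y$ and $-\sqrt{t}Y$ contributions via the symmetry of $Y$ turns the remainder into an expression of the form $\psi_g$ with $g$ a derivative of $f$ and $\beta=\sigma u\sqrt{t}|Y|$, for which Corollary~\ref{cor_psign} yields a bound by $\|g\|_{2n,L}$ with no singular factor. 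After taking the expectation in $Y$, the finiteness of all even moments $\E[Y^{2k}]$ and the boundedness of $t\le T$ convert this into the claimed estimate $\le C_1\|f\|_{2(m+\nu+1),L}$, with the index $2(m+\nu+1)$ reflecting the two derivatives consumed by each application of Corollary~\ref{cor_psign}. Verifying that the bookkeeping of indices lines up exactly — that the $2\nu+2$ derivatives in $V_1^{2\nu+2}$ together with the extra $m$ derivatives in the norm are all supplied by $\CpolKL{2(m+\nu+1)}{L}$, and that the $L$-index inflation stays within $L+\nu+1$ — will be the most delicate part of the argument.
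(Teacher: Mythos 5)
Your proposal is correct and follows essentially the same route as the paper: Taylor expansion with integral remainder in $t$ (for $X_0$) and in $w=\sqrt{t}Y$ (for $X_1$), using $\partial_t f(X_0(t,x))=V_0f(X_0(t,x))$ and $\partial_w f(X_1(w,x))=V_1f(X_1(w,x))$, symmetry of $Y$ and moment matching to identify the main terms, Lemma~\ref{lem_estimnorm}~(6) and Lemma~\ref{X0_inequalities} for the $V_0$ remainder, and the symmetrized pairing via Corollary~\ref{cor_psign} (with the same index bookkeeping $\|f\|_{2(m+\nu+1),L}$) for the $V_1$ remainder. The only imprecision is attributing the singularity to fractional powers inside $V_1^{2\nu+2}f$ — in fact $V_1^{2\nu+2}=(V_1^2)^{\nu+1}$ has polynomial coefficients, and the singularity arises only when differentiating the composition $x\mapsto g(X_1(u\sqrt{t}Y,x))$ in $x$, which is exactly what your subsequent appeal to Corollary~\ref{cor_psign} handles.
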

\begin{proof}
  Equation~\eqref{expan_X0} holds by using Taylor formula since $\frac{d}{dt}f(X_0(t,x))=V_0f(X_0(t,x))$. We have by Property~(6) of Lemma~\ref{lem_estimnorm} $\|V_0f\|_{m,L+1}\le |a-\frac {\sigma^2} 4|\|f'\|_{m,L+1}+|k|(2m+3) \|f'\|_{m,L}\le (2|a-\frac {\sigma^2} 4|+|k|(2m+3))\|f\|_{m+1,L}$ and thus $\|V_0^{\nu+1}f\|_{m,L+\nu+1}\le C \|f\|_{m+\nu+1,L}$ for some constant $C$ depending on $a,\sigma,k,\nu,m$.  Using the triangular inequality and Lemma~\ref{X0_inequalities}, we get the result.

  \blue{ We now prove the second part of the claim. We first show Equation~\eqref{expan_X1}. Since $\frac{d}{dt}f(X_1(t,x))=V_1f(X_1(t,x))$, we get by Taylor formula
    \begin{align*}
      f(X_1(t,x))&=\sum_{i=0}^{2\nu+1} \frac{t^i}{i!} V_1^if(x) +  \int_0^t \frac{(t-s)^{2\nu+1}}{(2\nu +1)!} V_1^{2\nu+2}f(X_1(s,x))du \\
      &=\sum_{i=0}^{2\nu+1} \frac{t^i}{i!} V_1^if(x) + t^{2\nu+2} \int_0^1 \frac{(1-u)^{2\nu+1}}{(2\nu +1)!} V_1^{2\nu+2}f(X_1(ut,x))du, \ t\in \R.
    \end{align*}
We apply this formula at $\sqrt{t}Y$ and take the expectation. Since $\E[Y^{2i+1}]=0$ by symmetry and $\E[Y^{2i}]=\E[N^{2i}]=\frac{(2i)!}{i!2^i}$ for $i\le \nu$, we get~\eqref{expan_X1}.  We now analyze the norm of the remainder.}   We have $\|\frac 12 V_1^2f\|_{m,L+1}\le \sigma^2 (m+2)\|f\|_{m+2,L}$ by using Lemma~\ref{lem_estimnorm}~(6). Then, we observe that by symmetry of~$Y$,
  \begin{align*}
     & \E\left[Y^{2\nu +2}\int_0^{1} \frac{(1-u)^{2\nu+1}}{(2\nu+1)!}V^{2\nu+2}_1 f(X_1(u \sqrt{t} Y,x))du \right]
    \\& = \frac 12 \E\left[Y^{2\nu +2}\int_0^{1} \frac{(1-u)^{2\nu+1}}{(2\nu+1)!}[V^{2\nu+2}_1 f(X_1(u \sqrt{t} Y,x)) + V^{2\nu+2}_1 f(X_1(-u \sqrt{t} Y,x))]du  \right]
  \end{align*}
  By Corollary~\ref{cor_psign}, we have
  \begin{align*}
    \|V^{2\nu+2}_1 f(X_1(u \sqrt{t} Y,\cdot))+V^{2\nu+2}_1 f(X_1(-u \sqrt{t} Y,\cdot))\|_{m,L+\nu+1} & \le C_{\sigma u \sqrt{t} Y,m,L} \|V^{2\nu+2}_1 f \|_{2m,L+\nu+1} \\&\le C' C_{\sigma u \sqrt{t} Y,m,L} \| f \|_{2(m+\nu+1),L},
  \end{align*}
  with $C'=(4\sigma^2 (m+\nu+1))^{\nu+1}$.
  The conclusion follows by using the triangle inequality, the polynomial growth of the constant $C_{\sigma u \sqrt{t} Y,m,L}$ given by~Corollary~\ref{cor_psign} and the finite moments $\E[Y^{2k}]$ for $k$ sufficiently large.
\end{proof}

We are now in position to prove the estimate for the approximation scheme~\eqref{Alfonsi_scheme}. Since this scheme is obtained as the composition of the schemes $X_0$ and $X_1$, the proof consists  is using iteratively the estimates of Lemma~\ref{lem_expan_V}.
\begin{prop}\label{prop_H1sch}
  Let $Y$ be a symmetric random variable such that $\E[Y^k]=\E[N^k]$ for $k\le 4$ and $\E[Y^{2k}]<\infty$ for all $k\in \N$. Let $\sigma^2\le 4a$ and $\hat{X}^x_t$ be the scheme~\eqref{Alfonsi_scheme}.  Let $m\in \N,L \in \N^*$ and $f \in \CpolKL{2(m+3)}{L}$. Then, we have for $t\in[0,T]$,
  $$ \E[f(\hat{X}^x_t)]=f(x) +t \cL f(x)+\frac{t^2}{2} \cL^2f(x) +\bar{R}f(t,x),$$
  with $\|\bar{R}f(t,\cdot)\|_{m,L+3}\le C t^3 \|f\|_{2(m+3),L}$.
\end{prop}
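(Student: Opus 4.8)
The plan is to exploit the splitting structure behind the scheme. By definition of $\varphi$ in~\eqref{def_varphi}, we have $\hat{X}^x_t = X_0(t/2, X_1(\sqrt{t}Y, X_0(t/2, x)))$, so that $\E[f(\hat{X}^x_t)] = P_0 P_1 P_0 f(x)$, where I write $P_0 g(x) := g(X_0(t/2,x))$ for the half-step ODE flow and $P_1 g(x) := \E[g(X_1(\sqrt{t}Y,x))]$ for the full-step diffusive part. The assumption $\sigma^2 \le 4a$ ensures $X_0(t/2,x) \ge 0$ for every $x \ge 0$ (since $a - \sigma^2/4 \ge 0$ in~\eqref{def_X0_and_psi_k}), so the three flows compose on $\R_+$ and every square root appearing below is well defined; this is the only place the restriction on the parameters is used.

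First I would Taylor-expand the three factors by means of Lemma~\ref{lem_expan_V}, working from the inside out. I expand the innermost $P_0 f$ with~\eqref{expan_X0} for $\nu=2$, then apply $P_1$ and expand each resulting term with~\eqref{expan_X1}, and finally apply the outer $P_0$ and expand once more; at each stage I keep only the orders in $t$ needed to reach $t^2$ after accounting for the prefactor already present (order $t^2$ for the term with no prefactor, order $t^1$ for a term carrying a factor $t$, order $t^0$ for a term carrying $t^2$). Collecting the outcome by powers of $t$, and using that $Y$ is symmetric with $\E[Y^k]=\E[N^k]$ for $k \le 4$ (which is exactly what makes~\eqref{expan_X1} valid up to $\nu=2$), the coefficient of $t^0$ is $f$ and the coefficient of $t^1$ is $(V_0 + \tfrac12 V_1^2)f = \cL f$. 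The decisive point is the coefficient of $t^2$: a direct algebraic computation shows that the symmetric $t/2$--$t$--$t/2$ arrangement produces $\tfrac12 (V_0 + \tfrac12 V_1^2)^2 f = \tfrac12 \cL^2 f$, which is the second-order Ninomiya--Victoir cancellation and relies on the exact weights $\tfrac12, 1, \tfrac12$ of the splitting.

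It then remains to gather all remainder terms into $\bar{R}f(t,\cdot)$, each of which carries a factor $t^3$, and to bound them in $\|\cdot\|_{m,L+3}$. Here I would transport the estimates of Lemma~\ref{lem_expan_V} through the surviving outer operators: a remainder is pushed through $P_0$ using the exponent-preserving stability $\|g(X_0(t/2,\cdot))\|_{m,L} \le e^{Kt/2}\|g\|_{m,L}$ of Lemma~\ref{X0_inequalities}, and through $P_1$ using the symmetrisation bound $\|\psi_g\|_{n,L} \le C\|g\|_{2n,L}$ of Corollary~\ref{cor_psign}. Two features make the indices close exactly as claimed: neither transport raises the growth exponent~$L$, so the only increase comes from the single order-two Taylor remainder, which raises it by $\nu+1 = 3$ and lands in $L+3$ (smaller raises from lower-order remainders being inflated to $L+3$ via Lemma~\ref{lem_estimnorm}(4)); and the doubling of the differentiation order in Corollary~\ref{cor_psign} at the $X_1$ step forces the worst term to consume $2(m+3)$ derivatives of $f$, giving $\|f\|_{2(m+3),L}$.

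I expect the main obstacle to be precisely this bookkeeping through the middle $X_1$ step. Because $V_1 = \sigma\sqrt{x}\,\partial_x$ and $X_1$ involves a square root, controlling $n$ derivatives of $P_1 g$ requires $2n$ derivatives of $g$ (Corollary~\ref{cor_psign}), so one must verify carefully that the most demanding remainder term — arising either from the order-two expansion of the middle $P_1$, or from the outer $P_0$ remainder applied to $P_1 P_0 f$ — still needs only $2(m+3)$ derivatives and exactly three extra units of polynomial growth, so that the final estimate $\|\bar{R}f(t,\cdot)\|_{m,L+3} \le C t^3 \|f\|_{2(m+3),L}$ holds with the stated indices rather than larger ones.
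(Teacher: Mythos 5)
Your proposal is correct and follows essentially the same route as the paper: decompose $\E[f(\hat{X}^x_t)]$ via the Strang composition $X_0(t/2,\cdot)\circ X_1(\sqrt{t}Y,\cdot)\circ X_0(t/2,\cdot)$, Taylor-expand each flow with Lemma~\ref{lem_expan_V}, check that the $t^2$-coefficient reassembles into $\tfrac12\cL^2 f$, and bound the three $O(t^3)$ remainders by transporting them through the remaining flows with Lemma~\ref{X0_inequalities} and Corollary~\ref{cor_psign}. Your index bookkeeping also matches the paper's: the derivative doubling at the $X_1$ step is indeed what produces $\|f\|_{2(m+3),L}$, and the growth exponent $L+3$ comes from the order-two Taylor remainders exactly as you describe.
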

\begin{proof}
  We use $\hat{X}^x_t=X_0(t/2,X_1(\sqrt{t}Y,X_0(t/2,x)))$
  and apply first~\eqref{expan_X0}:
  \begin{align*}
     & \E[f(X_0(t/2,X_1(\sqrt{t}Y,X_0(t/2,x))))]=\E\left[(f+\frac{t}{2} V_0 f +\frac{t^2}{8} V_0^2 f )(X_1(\sqrt{t}Y,X_0(t/2,x)))\right] + R_If(t,x), \\
     & \text{ with } R_If(t,x) = \left(\frac t 2\right)^3\int_0^{1}\frac{(1-u)^2}{2} \E[V_0^3f(X_0(u t/2,X_1(\sqrt{t}Y,X_0(t/2,x))))]ds.
  \end{align*}
  We get by using Lemma~\ref{X0_inequalities}, Corollary~\ref{cor_psign} (using the symmetry and the finite moments of~$Y$), again Lemma~\ref{X0_inequalities} and then Lemma~\ref{lem_estimnorm}~(6):
  \begin{align*}&\|\E[V_0^3f(X_0(u t/2,X_1(\sqrt{t}Y,X_0(t/2,\cdot))))]\|_{m,L+3}\le C\|\E[V_0^3f(X_0(u t/2,X_1(\sqrt{t}Y,\cdot)))]\|_{m,L+3} \\&\le C\|V_0^3f(X_0(u t/2,\cdot))\|_{2m,L+3}\le   C\|V_0^3f\|_{2m,L+3} \le C  \|f\|_{2m+3,L}
  \end{align*}
This gives $\| R_If(t,x)\|_{m,L+3}\le  C t^3 \|f\|_{2m+3,L}$, for $t\in [0,T]$.

  We now expand again and get from~\eqref{expan_X1}
  \begin{align*} & \E\left[(f+\frac{t}{2} V_0 f +\frac{t^2}{8} V_0^2 f )(X_1(\sqrt{t}Y,X_0(t/2,x)))\right]                               \\
     & =f(X_0(t/2,x))+\frac t2 V_1^2 f(X_0(t/2,x))+ \frac {t^2}{2} (V_1^2/2)^2 f(X_0(t/2,x)) + \frac{t}{2} V_0 f(X_0(t/2,x)) \\
     & \phantom{=} + \frac{t^2}{2} (V_1^2/2)V_0 f(X_0(t/2,x)) + \frac{t^2}{8} V_0^2 f(X_0(t/2,x)) +R_{II}f(t,x),
  \end{align*}
  with
  \begin{align*}
    R_{II}f(t,x)=  t^3 \E\Bigg[&Y^6 \int_0^{u} \frac{(1-u)^{5}}{5!}V^{6}_1 f(X_1(u \sqrt{t} Y,X_0(T/2,x)))du \\
    &+  \frac {Y^4} 2 \int_0^{u} \frac{(1-u)^{3}}{ 3!}V^{4}_1V_0 f(X_1(u \sqrt{t} Y,X_0(T/2,x)))du \\
                  & + \frac{Y^2}{8} \int_0^{u}  (1-u)V^{2}_1V_0^2 f(X_1(u \sqrt{t} Y,X_0(T/2,x)))du \Bigg].
  \end{align*}
  We use Lemmas~\ref{lem_expan_V},~\ref{X0_inequalities} and~\ref{lem_estimnorm} to get, for $t\in [0,T]$,
  $$\| R_{II}f(t,\cdot)\|_{m,L+3}\le Ct^3(\|f\|_{2(m+3),L}+\|V_0f\|_{2(m+2),L+1}+\|V_0^2f\|_{2(m+1),L+2})\le Ct^3 \|f\|_{2(m+3),L}.$$
  Last, we use again~\eqref{expan_X0} to get 
  \begin{align*}
     & f(X_0(t/2,x))+\frac t2 [V_1^2 +V_0] f(X_0(t/2,x))+ \frac {t^2}{2} [(V_1^2/2)^2+(V_1^2/2)V_0+V_0^2/4] f(X_0(t/2,x))     \\
     & = f(x) +\frac t2 V_0f(x)+\frac{t^2}{8}V_0^2 f(x) + \frac t2 [V_1^2 +V_0] f(x) + \frac {t^2}{4} [V_0 V_1^2 +V_0^2] f(x) \\&\phantom{=}+ \frac {t^2}{2} [(V_1^2/2)^2+(V_1^2/2)V_0+V_0^2/4] f(x)+ R_{III}f(t,x),
  \end{align*}
  where again by Lemma~\ref{lem_expan_V}  and~\ref{lem_estimnorm}, we have
  \begin{align*}
    \|R_{III}f(t,\cdot)\|_{m,L+3} & \le Ct^3( \|f\|_{m+3,L}+\|[V_1^2 +V_0]f\|_{m+2,L+1}\\& \quad +\|[(V_1^2/2)^2+(V_1^2/2)V_0+V_0^2/4]f\|_{m+1,L+2}) \\&\le Ct^3 \|f\|_{m+5,L}.
  \end{align*}
  Finally, we get $ \|\bar{R}f(t,\cdot)\|_{m,L+3}\le Ct^3 \|f\|_{2(m+3),L}$ with $\bar{R}f:=R_{I}f+R_{II}f+R_{III}f$ and
  \begin{align*}
    \E[f(X_0(t/2,X_1(\sqrt{t}Y,X_0(t/2,x))))] & = f(x)+t[V_0+V_1^2/2]f(x)                                \\&+\frac{t^2}{2}[V_0^2+V_0V_1^2/2+(V_1^2/2)V_0+ (V_1^2/2)^2]f(x) +\bar{R}f(t,x)\\
                                              & =f(x)+t \cL f(x)+\frac{t^2}{2} \cL^2f(x) +\bar{R}f(t,x).\qedhere
  \end{align*}
\end{proof}

\subsection{Proof of~\eqref{H2_bar}}

In this section, we mainly prove the following result.
\begin{prop}\label{prop_H2_NV}
  Let $\sigma^2\le 4a$ and $\hat{X}^x_t=\varphi(x,t,\sqrt{t}N)$ be the scheme~\eqref{NV_scheme} with $N\sim \mathcal{N}(0,1)$. Let $T>0$ and  $m,L\in\N$ such that $L\ge m$. We define for $n\ge 1$ and $l\in \N$, $Q_lf(x)=\E[f(\hat{X}^x_{h_l})]$with $h_l=\frac{T}{n^l}$. Then, there exists a constant $C\in \R_+^*$ such that for any $f \in \CpolKL{m}{L}$, $l\in \N$ and $t \in [0,T]$, 
  \begin{equation}\label{H2_NV}   \left\|\E[f(X^{\cdot}_t)]\right\|_{m,L} + \max_{0\le k\le n^l} \left\|Q_l^{[k]}f \right\|_{m,L} \le C \|f\|_{m,L}.
  \end{equation}
\end{prop}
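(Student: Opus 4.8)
The plan is to establish the two bounds in~\eqref{H2_NV} separately. The first term, $\|\E[f(X^{\cdot}_t)]\|_{m,L}$, is a pure statement about the CIR semigroup and should follow from a regularity result for CIR functionals (which the excerpt anticipates under the name Proposition~\ref{deriv_CIR_functionals} and the constant $C_{cir}(m,L,T)$): one needs that $x\mapsto \E[f(X^x_t)]$ is $\mathcal{C}^m$ with the derivatives controlled, uniformly in $t\in[0,T]$, by $\|f\|_{m,L}$ times the polynomial weight $1+x^L$. Provided such a proposition is available (it is exactly the kind of moment/flow estimate one proves by differentiating the SDE flow $x\mapsto X^x_t$ and using that the first variation process has moments bounded uniformly on $[0,T]$, together with the polynomial moment bounds $\E[|X^x_s|^q]\le C_q(1+x^q)$ already invoked in the proof of Proposition~\ref{LCIR_Expansion}), this term is handled immediately.

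The substantive part is the uniform bound on $\max_{0\le k\le n^l}\|Q_l^{[k]}f\|_{m,L}$. The natural strategy is a \emph{one-step contraction} estimate of the form
\begin{equation}\label{plan_onestep}
  \|Q_l f\|_{m,L}\le e^{Kh_l}\|f\|_{m,L}
\end{equation}
for some constant $K$ independent of $l$, which then iterates to $\|Q_l^{[k]}f\|_{m,L}\le e^{Kkh_l}\|f\|_{m,L}\le e^{KT}\|f\|_{m,L}$ for all $k\le n^l$ (since $kh_l\le n^l h_l = T$), giving the claim with $C=e^{KT}$. To prove~\eqref{plan_onestep}, I would exploit the splitting structure $\hat X^x_t=X_0(t/2,X_1(\sqrt{t}N,X_0(t/2,x)))$ and bound each composition factor in the $\|\cdot\|_{m,L}$ norm. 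The two outer $X_0$-compositions are controlled directly by Lemma~\ref{X0_inequalities}, which already gives $\|f(X_0(t,\cdot))\|_{m,L}\le e^{Kt}\|f\|_{m,L}$. The middle factor, the Gaussian average $g\mapsto \E[g(X_1(\sqrt{t}N,\cdot))]$, is where the square-root singularity bites: differentiating $g(X_1(\sqrt{t}N,x))=g((\sqrt{x}+\sqrt{t}\,\sigma N/2)^2)$ in $x$ produces factors of $1/\sqrt{x}$.

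The hard part, therefore, is to show that after taking the Gaussian expectation these $1/\sqrt{x}$ blow-ups cancel and leave a bounded operator on $\CpolKL{m}{L}$. Here the symmetry of $N$ is essential: pairing the $\pm N$ contributions lets me invoke Corollary~\ref{cor_psign}, which says precisely that $g\mapsto g((\sqrt{x}+\beta/2)^2)+g((\sqrt{x}-\beta/2)^2)$ maps $\CpolKL{2m}{L}$ to $\CpolKL{m}{L}$ with the explicit constant $C_{\beta,m,L}$. Writing $\E[g(X_1(\sqrt{t}N,\cdot))]=\tfrac12\E[\,\psi_g^{(\sqrt t\,\sigma N/2)}\,]$ and applying the corollary inside the expectation, then integrating against the Gaussian density (whose moments of all orders are finite and grow polynomially in the index, matching the polynomial growth of $C_{\beta,m,L}$ in $\beta$), yields $\|\E[g(X_1(\sqrt{t}N,\cdot))]\|_{m,L}\le C\|g\|_{2m,L}$. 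One must be mildly careful that this bound costs us smoothness (it controls the $m$-norm by the $2m$-norm), but for the \emph{uniform} estimate~\eqref{plan_onestep} I only need a clean $e^{Kh_l}$-type factor; the cleanest route is to combine the three factors and absorb the $t$-dependence so that the middle factor contributes $1+Ct\le e^{Ct}$ rather than a fixed loss of derivatives. I would assemble the three estimates, use $1+x\le e^x$ to merge them into a single exponential $e^{Kh_l}$, and conclude by the iteration described above. The only point requiring genuine attention is making the constant in the middle-factor bound \emph{additive} in $t$ (i.e. of the form $1+Ct$) rather than a fixed multiplicative constant, which the Taylor-with-remainder bookkeeping of Lemma~\ref{lem_expan_V} already provides since the leading term there is the identity.
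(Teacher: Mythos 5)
There is a genuine gap, and it sits exactly at the point you flag as "requiring genuine attention": the bound for the middle factor $g\mapsto \E[g(X_1(\sqrt{t}N,\cdot))]$. The tool you propose, Corollary~\ref{cor_psign}, controls $\|\psi_g\|_{n,L}$ by $C_{\beta,m,L}\,\|g\|_{2n,L}$ — it loses derivatives (the $m$-norm is bounded by the $2m$-norm) and its constant does not tend to $1$ as $t\to 0$ (at $\beta=0$ it gives a factor $2$ for the operator $\tfrac12\psi_g=g$). Neither defect is cosmetic: the one-step estimate must be iterated $n^l$ times with $l$ unbounded, so any loss of derivatives compounds to $2^{n^l}m$ derivatives of $f$, and any multiplicative constant bounded away from $1$ compounds to an unbounded factor. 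Your suggested repair via "the Taylor-with-remainder bookkeeping of Lemma~\ref{lem_expan_V}" does not work either, because the order-$t$ term in that expansion is $\tfrac{t}{2}V_1^2 f$, whose $\|\cdot\|_{m,L}$-norm is controlled only by $\|f\|_{m+2,L}$ — so the expansion still trades derivatives for powers of $t$ and cannot yield $\|\E[f(X_1(\sqrt{t}N,\cdot))]\|_{m,L}\le(1+Ct)\|f\|_{m,L}$ with the \emph{same} indices on both sides. The paper's proof hinges on a different mechanism (Lemma~\ref{regular_density}): after symmetrizing in $N$, one integrates by parts in the Gaussian variable $y$, which converts every $1/\sqrt{x}$ produced by an $x$-derivative into a derivative of the density $\eta$, and yields the representation $\partial_x^m\E[f(X_1(\sqrt{t}N,x))]=\int\!\!\int (u-u^2)^{m-1}f^{(m)}(w(u,x,y))\,\eta_m^*(y)\,du\,dy$ in which $f$ is differentiated exactly $m$ times. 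The kernel $\eta_m^*$ is nonnegative for the Gaussian (Lemma~\ref{gaussian_eta_positivity}, essentially a characterization of the Gaussian by Theorem~\ref{thm_carac_gauss}), and testing the representation against $x^m/m!$ and $x^{L+m}L!/(L+m)!$ shows the kernel has total mass producing exactly the constant $1+Ct$. This is the missing idea; without it the iteration does not close.

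A secondary remark: for the semigroup term you defer (reasonably) to Proposition~\ref{deriv_CIR_functionals}, but the justification you sketch — differentiating the stochastic flow $x\mapsto X^x_t$ and bounding the first variation process — is problematic for the CIR, since $\sigma\sqrt{x}$ is not differentiable (nor Lipschitz) at the origin. The paper instead subtracts the Taylor polynomial of $f$ at $0$, uses the explicit noncentral chi-squared density of $X^x_t$, and integrates by parts to obtain the closed formula~\eqref{formula_derivative} for $\partial_x^j\E[\hat f_m(X^x_t)]$, from which the $\|\cdot\|_{m,L}$ bound follows.
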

We split the proof in two parts. The first one deals with the semigroup of the CIR process, for which the assumption~$\sigma^2\le 4a$ is not needed. This is stated in Proposition~\ref{deriv_CIR_functionals}, whose proof exploits the particular form of the density of~$X^x_t$. The second part that deals with the approximation scheme is quite technical. We prove in fact in  Proposition~\ref{prop_H2_sch} a slightly more general result for the scheme  $\hat{X}^x_t=\varphi(x,t,\sqrt{t}Y)$, when $Y$ is a symmetric random variable with a smooth density. However, the conditions needed on the density are quite restrictive. These conditions are satisfied by the standard normal variable by Lemma~\ref{gaussian_eta_positivity}. If we want besides to have~\eqref{H2_NV} for any $m$ and in addition to match the moments $\E[Y^2]=\E[N^2]$ and $\E[Y^4]=\E[N^4]$ -- which is required to have a second-order scheme --, then Theorem~\ref{thm_carac_gauss} shows that we necessarily have $Y\sim \mathcal{N}(0,1)$. This is why we directly state here, for sake of simplicity, Proposition~\ref{prop_H2_NV} with $Y=N\sim \mathcal{N}(0,1)$.

\subsubsection{Upper bound for the semigroup}
We first prove the estimate~\eqref{H2_bar} for the semigroup of the CIR process. To do so, we take back the arguments of~\cite[Proposition 4.1]{AA_MCMA} that gives polynomial estimates for~$(t,x)\mapsto P_tf(x)$. First we remove the polynomial Taylor expansion of the function~$f$ at~$0$, which enables then to do an integration by parts and to get the remarkable formula in Eq.~\eqref{formula_derivative} below for the iterated derivatives of $P_tf$ that gives then the desired estimate. The polynomial part is analyzed separately in Lemma~\ref{Moments_Formula_CIR2} below with standard arguments.

\begin{prop}\label{deriv_CIR_functionals}
  Let $f\in\CpolKL{m}{L}$, $L\ge m$, $T>0$ and $t\in (0,T]$. Let  $X^x$ be the CIR process starting from $x\ge 0$. Then, $\E[f(X^{\cdot}_t)]\in\CpolKL{m}{L}$ and  we have the following estimate for some constant $C_\text{cir}(m,L,T)\in \R_+$:
  \begin{equation}\label{H0_estimate_CIR}
    \left\|\E[f(X^{\cdot}_t)]\right\|_{m,L} \le C_\text{cir}(m,L,T)\|f\|_{m,L}.
  \end{equation}
\end{prop}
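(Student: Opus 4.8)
\medskip

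The plan is to obtain a workable representation for the iterated derivatives $\partial_x^j \E[f(X^x_t)]$ for $j\le m$ and to control each of them by $\|f\|_{m,L}(1+x^L)$ uniformly in $t\in(0,T]$. The starting point is the transition density of the CIR process, which (up to the affine scaling coming from the drift) is a noncentral chi-square density and can be written explicitly as a series; differentiating $\E[f(X^x_t)]=\int_0^\infty f(y)\,p_t(x,y)\,dy$ in $x$ then amounts to differentiating this density in its noncentrality parameter, which is an affine function of $x$. The difficulty is that naive $x$-differentiation produces factors that blow up as $t\to 0$ (the density concentrates at $y=x$), so one cannot simply bound $\partial_x^j p_t(x,y)$ pointwise.

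\medskip

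The remedy, following the announced strategy taken from \cite[Proposition 4.1]{AA_MCMA}, is to split $f$ into its Taylor polynomial at the origin plus a remainder. First I would write $f(x)=P(x)+r(x)$, where $P\in\PLRp{L}$ is a polynomial of degree (at most) $L$ and $r$ vanishes to high order at $0$; the polynomial part is handled directly by Lemma~\ref{Moments_Formula_CIR2} (the announced companion lemma), which gives $\|\E[P(X^\cdot_t)]\|_{m,L}\le C\|P\|$ via the explicit moment formulas of Lemma~\ref{Moments_Formula_CIR}, and one controls $\|P\|$ by $\|f\|_{m,L}$. For the remainder $r$, the key is that, because $r$ and enough of its derivatives vanish at $0$, one can integrate by parts $j$ times in the integral $\int_0^\infty r(y)\,\partial_x^j p_t(x,y)\,dy$, transferring the $x$-derivatives onto $y$-derivatives of $r$ and onto the density. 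The crucial output is the remarkable identity announced as Eq.~\eqref{formula_derivative}, which expresses $\partial_x^j\E[f(X^x_t)]$ as an expectation of the form $\E[\,(\text{bounded weight})\cdot f^{(j)}(X^x_t)\,]$ (more precisely, an integral against the density of a shifted/reweighted CIR law), with no negative powers of $t$ surviving. Once one has such a representation, the bound $|f^{(j)}(y)|\le \|f\|_{m,L}(1+y^L)$ together with the polynomial moment bounds $\E[(X^x_t)^q]\le C_q(1+x^q)$ (valid since the coefficients have sublinear growth, as used already in the proof of Proposition~\ref{LCIR_Expansion}) yields $|\partial_x^j\E[f(X^x_t)]|\le C\|f\|_{m,L}(1+x^L)$, uniformly over $t\in(0,T]$ and $j\le m$. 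Taking the maximum over $j$ and the supremum over $x$ gives exactly \eqref{H0_estimate_CIR}.

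\medskip

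The hypothesis $L\ge m$ enters precisely at the integration-by-parts step: it guarantees that the remainder $r$ can be chosen with enough vanishing at the origin (to kill all boundary terms through order $m$) while still being controlled by the same norm $\|f\|_{m,L}$; equivalently, it ensures the Taylor polynomial has degree large enough to absorb all the derivatives being differentiated. I expect the main obstacle to be the derivation and bookkeeping of the identity \eqref{formula_derivative}: one must check carefully that all boundary terms at $y=0$ and $y=\infty$ vanish (using the vanishing of $r$ at $0$ and the rapid decay of the density and its derivatives at $\infty$), and that the weights arising from differentiating the noncentral chi-square density in its noncentrality parameter are genuinely bounded in $y$ and integrable against the density, with constants independent of $t$. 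The polynomial-growth estimate and the moment bounds are then routine; the real content is the cancellation of the singular-in-$t$ terms, which is exactly what the $f\mapsto f^{(j)}$ transfer via integration by parts achieves.
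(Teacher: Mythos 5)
Your plan follows essentially the same route as the paper's proof: split $f$ into its Taylor polynomial at the origin (handled by Lemma~\ref{Moments_Formula_CIR2}) plus a remainder vanishing at $0$, then use the explicit series form of the CIR density and integration by parts in $y$ to transfer the $x$-derivatives onto derivatives of the remainder, arriving at the representation \eqref{formula_derivative} and concluding with polynomial-growth and moment bounds. The only minor inaccuracy is the role you assign to $L\ge m$: in the paper the Taylor polynomial has degree $m$ (not up to $L$) and the vanishing needed for the integration by parts is automatic from that choice, while $L\ge m$ is what allows the polynomial part $T_m(f)$ to be controlled in the $\|\cdot\|_{m,L}$ norm.
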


\begin{proof}
  Let $f\in\CpolKL{m}{L}$ and $T_m(f)(x)=\sum_{j=0}^m \frac{f^{(j)}(0)}{j!} x^j$ its Taylor polynomial expansion at~$0$ of order $m$. We define $\hat{f}_m = f-T_m(f)\in\CpolKL{m}{L}$, so we have $f=\hat{f}_m +T_m(f)$.
  By Lemma~\ref{Moments_Formula_CIR2} below, one gets $\|\E[T_m(f)(X^{\cdot}_t)]\|_{m,L} \leq C_{cir}(m,T) \|T_m(f)\|_{m,L}$ and then
  \begin{equation}\label{estim_taylor_trunc}\|\E[T_m(f)(X^{\cdot}_t)]\|_{m,L}\leq e C_{cir}(m,T) \|f\|_{m,L},
  \end{equation}
  since for all $i\in\{0,\ldots,m\}$ and $x\geq0$
  $$ \bigg|\frac{(T_m(f))^{(i)}(x)}{1+x^L}\bigg| = \bigg|\sum_{j=0}^{m-i} \frac{f^{(i+j)}(0)}{j!} \frac{x^{j}}{1+x^L}\bigg| \leq \sum_{j=0}^{m-i} \frac{1}{j!} \|f\|_{m,L} \leq  e \|f\|_{m,L}. $$

  We now focus on~$\E[\hat{f}_m(X^{\cdot}_t)]$. We recall the density of $X^x_t$~(see e.g. \cite[Proposition 1.2.11]{AA_book})\footnote{In the case $a=0$, $X^x_t$ is distributed according to the probability measure $e^{-d_t x/2}\delta_{0}(dx)+\sum_{i=1}^\infty \frac{e^{-d_t x/2}(d_t x/2)^i}{i!} \frac{c_t/2}{\Gamma(i)}\left(\frac{c_t z}{2}\right)^{i-1}e^{-c_t z/2}$. The proof works the same since $\hat{f}_m(0)=0$, so that $\E[\hat{f}_m(X^x_t)]$ only involves the absolutely continuous part of the distribution. }
  \begin{equation}\label{CIR_density}
    p(t,x,z)=\sum_{i=0}^\infty \frac{e^{-d_t x/2}(d_t x/2)^i}{i!} \frac{c_t/2}{\Gamma(i+v)}\left(\frac{c_t z}{2}\right)^{i-1+v}e^{-c_t z/2}
  \end{equation}
  where $c_t=\frac{4k}{\sigma^2(1-e^{-kt})}$, $v=2a/\sigma^2$ and $d_t=c_te^{-kt}$. Let us remark that
  \begin{equation*}
    c_t\geq c_\text{min}:=\begin{cases} \,\,\quad\frac{4k}{\sigma^2},      & k>0  \\
      \quad\frac{4}{\sigma^2T},          & k=0  \\
      \frac{4|k|}{\sigma^2(e^{|k|T}-1)}, & k<0.
    \end{cases}
  \end{equation*}
  We have
  \begin{equation*}
    \E[\hat{f}_m(X^x_t)] = \sum_{i=0}^\infty \frac{e^{-d_t x/2}(d_t x/2)^i}{i!} I_i(\hat{f}_m,c_t),\ t>0,
  \end{equation*}
  where
  \begin{equation*}
    I_i(\hat{f}_m,c_t)=\int_0^\infty \hat{f}_m(z) \frac{c_t/2}{\Gamma(i+v)}\left(\frac{c_t z}{2}\right)^{i-1+v}e^{-c_t z/2}dz.
  \end{equation*}
  Differentiating successively, we get that for $j\le m $, $t\in(0,T]$ and $x\in\R_+$
  \begin{equation*}
    \partial^x_j\E[\hat{f}(X^x_t)]= \sum_{i=0}^\infty \frac{e^{-d_t x/2}(d_t x/2)^i}{i!} \Delta^j_t(I_i(\hat{f}_m,c_t)),
  \end{equation*}
  where $\Delta_t$ : $\R^\N \rightarrow \R^\N$ is an operator defined on sequences $(I_i)_{i\geq0}\in\R^\N$ by $\Delta_t(I_i) = \frac{d_t}{2} (I_{i+1}-I_i)= \frac{e^{-kt}}{2}c_t(I_{i+1}-I_i)$. An integration by parts gives for $i\geq 1$
  \begin{align*}
    I_i(\hat{f}^{(j)}_m,c_t) & = \int_0^\infty \hat{f}^{(j-1)}_m(z) \frac{(c_t/2)^2}{\Gamma(i+v)}\left(\frac{c_t z}{2}\right)^{i-1+v}e^{-c_t z/2}dz             \\
                             & \quad- \int_0^\infty \hat{f}_m^{(j-1)}(z) \frac{(c_t/2)^2(i-1+v)}{\Gamma(i+v)}\left(\frac{c_t z}{2}\right)^{i-2+v}e^{-c_t z/2}dz \\
                             & =\frac{c_t}{2}(I_i(\hat{f}_m^{(j-1)},c_t)-I_{i-1}(\hat{f}_m^{(j-1)},c_t))=e^{kt}\Delta_t (I_{i-1}(\hat{f}_m^{(j-1)},c_t)),
  \end{align*}
  since $\hat{f}^{(j)}_m(0)=0$ for all $1\leq j\leq m$ and $\hat{f}^{(j)}_m$has a polynomial growth. By iterating, we get for all $t\in(0,T]$ and $x\in\R_+$,
  \begin{equation}\label{formula_derivative}
    \partial^x_j\E[\hat{f}_m(X^x_t)]= \sum_{i=0}^\infty \frac{e^{-d_t x/2}(d_t x/2)^i}{i!} I_{i+j}(\hat{f}_m^{(j)},c_t) e^{-kjt}.
  \end{equation}
  Note that, since for $j\leq m $, $ |\hat{f}_m^{(j)}(z)| \leq \|\hat{f}_m\|_{m,L}(1 + z^L)$ and using the well known formula for the $L$-th raw moment of gamma distribution we have for all $i\in\N$
  \begin{equation}\label{estimate_I_f_m}
    |I_i(\hat{f}_m^{(j)},c_t)| \leq \|\hat{f}_m\|_{m,L} \left(1+ \bigg(\frac{2}{c_t}\bigg)^L \frac{\Gamma(i+L+v)}{\Gamma(i+v)}\right).
  \end{equation}
  Thus, the derivation of the series~\eqref{formula_derivative} is valid, and we get that
  $$|\partial^x_j\E[\hat{f}_m(X^x_t)]|\leq \|\hat{f}_m\|_{m,L} e^{-k jt}\left(1+ \big(\frac{2}{c_t}\big)^L \sum_{i=0}^\infty \frac{e^{-d_t x/2}(d_t x/2)^i}{i!} \frac{\Gamma(i+j+L+v)}{\Gamma(i+j+v)}\right). $$
  The quotient $\frac{\Gamma(i+j+L+v)}{\Gamma(i+j+v)}$ is a polynomial function of degree $L$ with respect to $i$, and we denote $\beta^j_0,\ldots,\beta^j_{L}$ its coefficients in the basis $\{ 1, i, i(i-1),\ldots, i(i-1)\cdots(i-L+1)\}$. Thus, we get that
  \begin{align*}
    |\partial^x_j\E[\hat{f}_m(X^x_t)]| & \leq \|\hat{f}_m\|_{m,L} e^{(-k)^+jT}\left(1+ \bigg(\frac{2}{c_t}\bigg)^L \sum_{i=0}^L |\beta^j_i| \bigg(\frac{d_t}{2}\bigg)^i x^i \right) \\
                                       & \leq \|\hat{f}_m\|_{m,L} e^{(-k)^+ mT}\left(1+ \sum_{i=0}^L |\beta^j_i| \bigg(\frac{2}{c_t}\bigg)^{L-i} e^{-kit} (1+x^L) \right)           \\
                                       & \leq \|\hat{f}_m\|_{m,L} e^{(-k)^+(m+L)T}\left(1+ \sum_{i=0}^L |\beta^j_i| \bigg(\frac{2}{c_{\text{min}}}\bigg)^{L-i}   \right)(1+x^L).
  \end{align*}
  By the triangular inequality and~\eqref{estim_taylor_trunc}, we get $\|\hat{f}_m\|_{m,L}\leq(1+e)\|f\|_{m,L}$, so one has for all $t\in (0,T]$, $j\le m$
  \begin{equation}
    |\partial^x_j\E[\hat{f}_m(X^x_t)]|  \leq (1+e) e^{(-k)^+(m+L)T} \left(1+ \sum_{i=0}^L |\beta^j_i| \bigg(\frac{2}{c_{\text{min}}}\bigg)^{L-i}   \right)\|f\|_{m,L}(1+x^L),
  \end{equation}
  and thus for all $t\in(0,T]$:
  \begin{equation}
    \|\E[\hat{f}_m(X^{\cdot}_t)]\|_{m,L} \leq \hat{C}\|f\|_{m,L},
  \end{equation}
  where $\hat{C}:=  (1+e) e^{(-k)^+(m+L)T} \max_{0\le j\le m}\left(1+ \sum_{i=0}^L |\beta^j_i| \bigg(\frac{2}{c_{\text{min}}}\bigg)^{L-i}   \right)$.
  Finally, we get the desired estimate by the triangular inequality,~\eqref{estim_taylor_trunc} and Lemma~\ref{Moments_Formula_CIR2}:
  \begin{equation*}
    \left\|\E[f(X^{\cdot}_t)]\right\|_{m,L} \leq  \|\E[\hat{f}_m (X^{\cdot}_t)]\|_{m,L} + \|\E[T_m(f) (X^{\cdot}_t)]\|_{m,L} \leq (\hat{C}+C_{cir}(m,T)) \| f\|_{m,L}. \qedhere
  \end{equation*}
\end{proof}

\begin{lemma}\label{Moments_Formula_CIR2}
  Let $P\in \PLRp{m}$ be  a polynomial function of degree $m\in\N^*$ and $L\in\N^*$ such that $L\geq m$. Then, for $t\in [0,T]$ we have the following estimate
  \begin{equation}\label{Pol_Estimate_CIR_2}
    \|\E[P(X^{\cdot}_t)]\|_{m,L} \leq C_{\text{cir}}(m,T) \|P\|_{m,L},
  \end{equation}
  where $C_{\text{cir}}(m,T) = \max_{t\in[0,T]} \sum_{j=0}^m \sum_{i=0}^j |\tilde{u}_{i,j}(t)|$ \blue{ with $\tilde{u}_{i,j}(t)$ defined as in Lemma~\ref{Moments_Formula_CIR} by $\E[(X^x_t)^j]=\sum_{i=0}^j\tilde{u}_{i,j}(t)x^i$}.
\end{lemma}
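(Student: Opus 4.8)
The plan is to exploit the explicit polynomial structure of $\E[P(X^\cdot_t)]$ provided by Lemma~\ref{Moments_Formula_CIR} and to reduce the estimate to a purely algebraic bound on the coefficients. Writing $P(x)=\sum_{j=0}^m a_j x^j$, the first observation is that each coefficient is controlled by the norm of $P$: since $P^{(j)}(0)=j!\,a_j$, evaluating the definition~\eqref{mLnorm} at $x=0$ gives $|a_j|\le \|P\|_{m,L}/j!$ for every $j\in\{0,\dots,m\}$. This elementary fact is exactly what will let me absorb the factorial factors produced by differentiation further on.

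Next, by linearity of the expectation and Lemma~\ref{Moments_Formula_CIR}, I would write $\E[P(X^x_t)]=\sum_{j=0}^m a_j\,\E[(X^x_t)^j]=\sum_{j=0}^m a_j\sum_{i=0}^j \tilde u_{i,j}(t)\,x^i$, which is again a polynomial of degree at most $m$. To estimate its $\|\cdot\|_{m,L}$-norm I then fix $j'\in\{0,\dots,m\}$, differentiate $j'$ times, and take the weighted supremum. Differentiating the monomial $x^i$ produces $\frac{i!}{(i-j')!}x^{i-j'}$, so that for each $x\ge0$, $\frac{|\partial_x^{j'}\E[P(X^x_t)]|}{1+x^L}\le \sum_{j=j'}^m\sum_{i=j'}^j |a_j|\,|\tilde u_{i,j}(t)|\,\frac{i!}{(i-j')!}\,\frac{x^{i-j'}}{1+x^L}$. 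Here I use the hypothesis $L\ge m$: since $0\le i-j'\le i\le m\le L$, one has $\sup_{x\ge0}\frac{x^{i-j'}}{1+x^L}\le 1$, so the $x$-dependence drops out of each summand.

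The crux is then a combinatorial cancellation. Combining $|a_j|\le \|P\|_{m,L}/j!$ with $\frac{i!}{(i-j')!}\le i!\le j!$ (valid because $i\le j$), I get $|a_j|\,\frac{i!}{(i-j')!}\le \|P\|_{m,L}$ for all admissible indices. Hence each summand is bounded by $\|P\|_{m,L}\,|\tilde u_{i,j}(t)|$, and summing over $i,j$ and maximizing over $j'$ yields $\|\E[P(X^\cdot_t)]\|_{m,L}\le \|P\|_{m,L}\sum_{j=0}^m\sum_{i=0}^j |\tilde u_{i,j}(t)|$, from which the claim follows after taking the maximum over $t\in[0,T]$ in the definition of $C_{\text{cir}}(m,T)$.

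I expect the only delicate point to be precisely this bookkeeping of the factorial factors: a naive monomial-by-monomial triangle-inequality bound of $\|\E[(X^\cdot_t)^j]\|_{m,L}$ would leave a spurious factor of order $m!$, and the clean constant stated in the lemma is recovered \emph{only} by pairing the differentiation factor $\frac{i!}{(i-j')!}$ with the gain $1/j!$ coming from the coefficient bound. Everything else (the polynomial representation and the underlying moment estimates) is furnished by Lemma~\ref{Moments_Formula_CIR} and is routine.
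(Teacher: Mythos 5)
Your proof is correct and follows essentially the same route as the paper's: both rely on the representation from Lemma~\ref{Moments_Formula_CIR}, the bound $\sup_{x\ge 0} x^{i-l}/(1+x^L)\le 1$ from $L\ge m$, and the pairing of the differentiation factor $\frac{i!}{(i-l)!}\le j!$ with the coefficient bound $j!\,|a_j|=|P^{(j)}(0)|\le\|P\|_{m,L}$. The only difference is cosmetic: the paper factors out $\max_j |a_j|\,j!$ at the end rather than absorbing $1/j!$ into each coefficient from the start.
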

\begin{proof}
  We consider a polynomial function $P(y)=\sum_{i=0}^m b_i y^i$ of degree $m$ and $L\geq m$. For all $l\in\{0,\ldots,m\}$ one has from Lemma~\ref{Moments_Formula_CIR}
  \begin{align*}
    \frac{|\partial_x^l\E[P(X_t^x)]|}{1+x^L} & =\bigg| \sum_{j=0}^m b_j \frac{\partial_x^l\tilde{u}_j(t,x)}{1+x^L} \bigg|  \leq \bigg| \sum_{j=0}^m b_j \sum_{i=l}^j\tilde{u}_{i,j}(t)\frac{i!}{(i-l)!}\frac{x^{i-l}}{1+x^L} \bigg| \\
                                             & \leq  \sum_{j=0}^m |b_j| j! \sum_{i=l}^j |\tilde{u}_{i,j}(t)|
    \leq \max_{t\in[0,T]} \sum_{j=0}^m \sum_{i=l}^j |\tilde{u}_{i,j}(t)| \max_{j\in\{0,\ldots,L\}}|b_j| j!,
  \end{align*}
  passing to supremum over $x\geq 0$, $l\in\{0,\ldots,m\}$ we get \eqref{Pol_Estimate_CIR_2} observing that $|b_j| j!=|P^{(j)}(0)|\le \|P\|_{m,L}$.
\end{proof}

\subsubsection{Upper bound for the approximation scheme}
We now prove the estimate~\eqref{H2_bar} for the approximation of the CIR process. The main result of this paragraph is the following.
\begin{prop}\label{prop_H2_sch}
  Let $T>0, \sigma^2\le 4a$, $m,M \in \N$,  $Y$ be a symmetric random variable with density $\eta\in \mathcal{C}^M(\R)$ such that for all $i\in\{0,\ldots,M\}$, $|\eta^{(i)}(y)|=o(|y|^{-(2L+i)})$ for $|y|\rightarrow \infty$,  and $\eta^*_m\ge 0$ for all $1 \le m\le M$ (see Lemma~\ref{regular_density} below for the definition of $\eta^*_m$).
  Let $Q_lf(x)=\E[f(\hat{X}^x_{h_l})]$ with $\hat{X}^x_t=\varphi(t,x,\sqrt{t}Y)$, $n\ge 1$, $l\in \N$ and $h_l=T/n^l$. Then, for any $L \in \N$, there exists $C\in \R_+$ such that: 
  $$  \max_{0\le j \le n^l}\|Q_l^{[j]}f\|_{m,L} \le C \|f\|_{m,L}, \ f\in \CpolKL{m}{L}, l \in \N.$$
\end{prop}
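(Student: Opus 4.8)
The plan is to establish the stability estimate $\|Q_l^{[j]}f\|_{m,L}\le C\|f\|_{m,L}$ by first proving a \emph{one-step} contraction-type bound of the form $\|Q_lf\|_{m,L}\le e^{K h_l}\|f\|_{m,L}$ for some constant $K$ independent of $l$ and $j$, and then iterating it at most $n^l$ times so that the accumulated factor $e^{K h_l n^l}=e^{KT}$ stays bounded uniformly. Since $Q_l^{[j]}=Q_l^{[j-1]}Q_l$ and $j\le n^l$, this immediately yields $\max_{0\le j\le n^l}\|Q_l^{[j]}f\|_{m,L}\le e^{KT}\|f\|_{m,L}$, so the whole difficulty is concentrated in the single-step bound. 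Because $\hat X^x_t=X_0(t/2,X_1(\sqrt t Y,X_0(t/2,x)))$ is a composition, I would control it factor by factor: the two outer $X_0$ flows are handled directly by Lemma~\ref{X0_inequalities}, which already gives $\|f(X_0(t,\cdot))\|_{m,L}\le e^{Kt}\|f\|_{m,L}$, so the crux reduces to estimating the middle operator $f\mapsto \E[f(X_1(\sqrt t Y,\cdot))]$ in the $\|\cdot\|_{m,L}$ norm.

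The key step is thus bounding $\E[f(X_1(\sqrt t Y,\cdot))]=\E[f((\sqrt\cdot+\tfrac{\sigma}2\sqrt t Y)^2)]$ together with its first $m$ derivatives. Here the square-root volatility creates the central obstacle: differentiating $x\mapsto f(X_1(\sqrt t Y,x))$ in $x$ produces negative powers of $\sqrt x$ that blow up at the origin. Exactly as in Lemma~\ref{regular_rep} and Corollary~\ref{cor_psign}, I would exploit the symmetry of $Y$ to pair $X_1(\sqrt t Y,x)$ with $X_1(-\sqrt t Y,x)$ and write $2\E[f(X_1(\sqrt t Y,x))]=\E[\psi_f(x)]$ with $\psi_f(x)=f((\sqrt x+\tfrac\sigma2\sqrt t Y)^2)+f((\sqrt x-\tfrac\sigma2\sqrt t Y)^2)$; the singular terms cancel and Corollary~\ref{cor_psign} gives $\|\psi_f\|_{m,L}\le C_{\sigma\sqrt t Y,m,L}\|f\|_{2m,L}$. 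The subtlety flagged in the hypotheses — the condition $\eta^*_m\ge 0$ and the decay $|\eta^{(i)}(y)|=o(|y|^{-(2L+i)})$ on the density of $Y$ — is what lets one control $\|\cdot\|_{m,L}$ rather than merely $\|\cdot\|_{2m,L}$: I expect one must integrate by parts in $y$ against the density $\eta$ (this is where Lemma~\ref{regular_density} and the quantity $\eta^*_m$ enter), trading the loss of regularity $2m$ back down to $m$ at the price of derivatives falling on $\eta$, with the decay conditions ensuring the boundary terms vanish and the resulting integrals converge.

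Assembling the pieces, the single-step estimate would read $\|\E[f(\hat X^\cdot_t)]\|_{m,L}\le e^{Kt/2}\,\|\E[f(X_1(\sqrt t Y,X_0(t/2,\cdot)))]\|_{m,L}\le e^{Kt}\,C(t)\|f\|_{m,L}$, where $C(t)=1+O(t)$ comes from the middle operator after the integration-by-parts argument produces a factor of the form $1+\text{(const)}\,t$ rather than a genuine expansion (one only needs stability here, not a sharp order-three remainder as in Proposition~\ref{prop_H1sch}). Absorbing $C(t)\le e^{\tilde K t}$ via $1+x\le e^x$ gives the clean one-step bound $\|Q_lf\|_{m,L}\le e^{K' h_l}\|f\|_{m,L}$, and iteration finishes the proof. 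The main obstacle I anticipate is precisely the middle step: making the integration by parts in $y$ rigorous while keeping the $y$-dependence of the constant $C_{\sigma\sqrt t Y,m,L}$ integrable against $\eta$ and its derivatives, which is where the nonnegativity of $\eta^*_m$ and the polynomial decay hypotheses on $\eta$ must be used carefully — this is the reason the proposition is stated under restrictive density conditions that, as the surrounding text notes, ultimately force $Y\sim\mathcal N(0,1)$.
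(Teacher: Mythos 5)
Your proposal follows essentially the same route as the paper: the one-step bound $\|Q_lf\|_{m,L}\le e^{Kh_l}\|f\|_{m,L}$ obtained by treating the composition $X_0\circ X_1\circ X_0$ factor by factor (Lemma~\ref{X0_inequalities} for the outer flows, and the integration-by-parts representation against $\eta^*_m$ of Lemma~\ref{regular_density} for the middle operator), followed by iteration up to $n^l$ steps to accumulate the uniform factor $e^{KT}$. You correctly identify the two decisive points — that the middle estimate must close in the \emph{same} norm $\|\cdot\|_{m,L}$ (not $\|\cdot\|_{2m,L}$) with a constant of the form $1+Ct$, and that the positivity of $\eta^*_m$ together with the decay of $\eta$ is what makes this possible — so the proposal matches the paper's proof.
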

Note that by Lemma~\ref{gaussian_eta_positivity} below, the assumptions of Proposition~\ref{prop_H2_sch} are satisfied by~$Y\sim \mathcal{N}(0,1)$. Therefore,~\eqref{H2_bar} holds for  the scheme of Ninomiya and Victoir~\eqref{NV_scheme}.
\begin{proof}
  We have $\hat{X}^x_t=\varphi(t,x,\sqrt{t}Y)=X_0(t/2,X_1(\sqrt{t}Y,X_0(t/2,x)))$. Let $f \in \CpolKL{m}{L}$. We apply Lemma~\ref{X0_inequalities} and Lemma~\ref{regular_density} below to get:
  \begin{align*}  \|\E[f(X_0(t/2,X_1(\sqrt{t}Y,X_0(t/2,\cdot ))))]\|_{m,L} & \le e^{Kt/2} \|\E[f(X_1(\sqrt{t}Y,X_0(t/2,\cdot )))]\|_{m,L} \\&\le e^{Kt/2+Ct} \|f(X_0(t/2,\cdot ))\|_{m,L}\le e^{(C+K)t} \|f\|_{m,L}
  \end{align*}
  This gives $\max_{0\le j \le n^l}\|Q_l^{[j]}f\|_{m,L} \le e^{(C+K)T} \|f\|_{m,L}$.
\end{proof}

\begin{lemma}\label{regular_density}
  Let $M,L\in \N$. Let $Y$ be a symmetric random variable with density $\eta\in \mathcal{C}^M(\R)$ such that for all $i\in\{0,\ldots,M\}$, $|\eta^{(i)}(y)|=o(|y|^{-(2L+i)})$ for $|y|\rightarrow \infty$. Then, for all function $f\in\CpolKL{M}{L}$, $m\in\{1, \ldots,  M\}$ and $t\in [0,T]$ one has the following representation
  \begin{equation}\label{repres_X1_functional}
    \partial^m_x\E[f(X_1(\sqrt{t}Y,x))] = \int_{-\infty}^\infty \int_0^1 (u-u^2)^{m-1} f^{(m)}(w(u,x,y)) \eta^*_m(y) dudy
  \end{equation}
  where  $w(u,x,y)=x+(2u-1)\sigma\sqrt{t}y\sqrt{x}+ \sigma^2ty^2/4$, $\eta^*_m(y)=(-1)^{m-1}  \left(\sum_{j=1}^m c_{j,m}  y^j\eta^{(j)}(y)\right)$, and the coefficients $c_{j,m}$  are defined by induction, starting from $c_{1,1}=-1$, through the following formula
  \begin{equation}\label{recursive_coeff_formula}
    c_{j,m}   = \bigg(\frac{2j}{m-1}-4\bigg)  c_{j,m-1}\mathds{1}_{j<m} + \frac{2}{m-1}  c_{j-1,m-1}\mathds{1}_{j>1},     \  j\in\{1, \ldots, m\}, \, m\in\{2, \ldots,M\}.  \\
  \end{equation}
 \blue{In particular, $c_{m,m}=-\frac{2^{m-1}}{(m-1)!}<0$.} Furthermore, if the density $\eta$ is such that  $\eta^*_{m}(y)\geq0$ for all $y\in\R$, and all $ m\in\{1, \ldots,  M\}$, then there exists $C\in \R_+$ such that
  \begin{equation}\label{estimate_X1_functional}
    	\|\E[f(X_1(\sqrt{t}Y,\cdot))]\|_{m,L} \leq (1+Ct) \|f\|_{m,L},\ t\in [0,T].
  \end{equation}
\end{lemma}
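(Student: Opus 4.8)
The plan is to prove the representation~\eqref{repres_X1_functional} by induction on $m$, then read off the recursion~\eqref{recursive_coeff_formula} and the closed form for $c_{m,m}$ from the induction step, and finally deduce~\eqref{estimate_X1_functional} from the representation together with the sign hypothesis $\eta^*_m\ge 0$. Throughout I write $a=\sigma\sqrt t$ and $\phi(x,y)=f(X_1(\sqrt t y,x))=f((\sqrt x+ay/2)^2)$ (using~\eqref{def_X1}), so that $G(x):=\E[f(X_1(\sqrt t Y,x))]=\int_{\R}\phi(x,y)\eta(y)\,dy$, and I record the key identity $\partial_x\phi=\frac{1}{a\sqrt x}\,\partial_y\phi$, which is what trades the singular $x$-derivative for a harmless $y$-derivative. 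The decay assumption $|\eta^{(i)}(y)|=o(|y|^{-(2L+i)})$ serves two purposes: it justifies differentiating under the integral sign (recall that $\phi$ and its derivatives grow only polynomially in $y$, at most like $|y|^{2L}$), and it makes every boundary term vanish in the integrations by parts in $y$ performed below.

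First I would settle the base case $m=1$. Differentiating under the integral, using $\partial_x\phi=\frac{1}{a\sqrt x}\partial_y\phi$ and one integration by parts in $y$ gives $\partial_x G(x)=-\frac{1}{a\sqrt x}\int_{\R}\phi(x,y)\eta'(y)\,dy$; since $\eta'$ is odd only the odd part of $\phi$ survives, and writing $\phi(x,y)-\phi(x,-y)=2ay\sqrt x\int_0^1 f'(w(u,x,y))\,du$ (fundamental theorem of calculus along $u\mapsto w$, whose $u$-derivative is the constant $2ay\sqrt x$) cancels the $1/\sqrt x$ and yields exactly~\eqref{repres_X1_functional} with $\eta^*_1=-y\eta'$, i.e. $c_{1,1}=-1$. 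For the induction step I differentiate the order-$m$ representation once in $x$. Since $\partial_x w=1+\frac{(2u-1)ay}{2\sqrt x}$, this splits into a regular contribution, with weight $(u-u^2)^{m-1}f^{(m+1)}(w)$, and a $1/\sqrt x$-singular contribution. In the singular one I replace $(2u-1)a\sqrt x\,f^{(m+1)}(w)=\partial_y[f^{(m)}(w)]-\tfrac{a^2y}{2}f^{(m+1)}(w)$ (from $\partial_y w=(2u-1)a\sqrt x+a^2y/2$) and integrate by parts in $y$: this both cancels the $1/\sqrt x$ and keeps the order of differentiation of $f$ at $m+1$ — which is precisely why an interleaved induction is used rather than a direct appeal to Lemma~\ref{regular_rep}, whose formula would require $f\in\mathcal{C}^{2m}$. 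The $u$-weight is then promoted from $(u-u^2)^{m-1}$ to $(u-u^2)^m$ by an integration by parts in $u$ (using $\partial_u(u-u^2)^m=-m(u-u^2)^{m-1}(2u-1)$ and the vanishing of $(u-u^2)^m$ at $u\in\{0,1\}$). Each $y$-integration by parts multiplies a weight $y^j\eta^{(j)}$ by $y$ and differentiates $\eta$ once more, so that collecting the coefficients of $y^j\eta^{(j)}(y)$ produces precisely the two-term recursion~\eqref{recursive_coeff_formula}.

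Reading the recursion on its diagonal, the $\mathds{1}_{j<m}$ term drops out at $j=m$ and we are left with $c_{m,m}=\frac{2}{m-1}c_{m-1,m-1}$, which telescopes from $c_{1,1}=-1$ to $c_{m,m}=-\frac{2^{m-1}}{(m-1)!}$, as claimed. The hard part will be the bookkeeping of the induction step: one must verify that, after the $u$- and $y$-integrations by parts, \emph{all} the spurious $1/\sqrt x$ (and the intermediate $1/x$) factors reorganize into the single clean weight $(u-u^2)^m f^{(m+1)}(w)$ integrated against the combination $\eta^*_{m+1}$, and that the coefficients land exactly on~\eqref{recursive_coeff_formula}. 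This is where the algebraic structure of $w$ — the identities $\partial_u w=2ay\sqrt x$ and $(2u-1)^2=1-4(u-u^2)$ — is used repeatedly, and where the decay hypothesis is needed to discard every $y$-boundary term and to legitimate the interchanges of integration and differentiation.

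Finally, granting the representation and $\eta^*_i\ge 0$ for $1\le i\le m$, the estimate~\eqref{estimate_X1_functional} follows cheaply. For $1\le i\le m$ I bound $|f^{(i)}(w)|\le\|f\|_{m,L}(1+w^L)$ inside~\eqref{repres_X1_functional}; since $w=x+(2u-1)ay\sqrt x+a^2y^2/4\ge(\sqrt x-a|y|/2)^2\ge 0$ and $(u-u^2)^{i-1}\eta^*_i(y)\,du\,dy$ is a nonnegative measure, I may expand $(1+w^L)$ in powers of $ay=\sigma\sqrt t\,y$. The zeroth-order term reproduces $(1+x^L)$ with constant $\int_0^1(u-u^2)^{i-1}du\int_{\R}\eta^*_i(y)\,dy=1$ (the value $\int_{\R}\eta^*_i=\frac{(2i-1)!}{((i-1)!)^2}$ is obtained by testing~\eqref{repres_X1_functional} on $f(x)=x^i/i!$, for which $\partial_x^i G\equiv 1$), while every remaining term carries a factor $\sigma^2 t$, the odd powers of $y$ integrating to zero because $\eta^*_i$ is even (each $y^j\eta^{(j)}$ is even since $\eta$ is). Hence $\|\partial_x^i G\|_{0,L}\le(1+Ct)\|f\|_{m,L}$ for $1\le i\le m$, while the case $i=0$ is the elementary moment bound on $\E[(\sqrt x+\sigma\sqrt t N/2)^{2L}]$. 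Taking the maximum over $i$ yields~\eqref{estimate_X1_functional}.
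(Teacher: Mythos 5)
Your proposal follows essentially the same route as the paper's proof: induction on $m$ using the trade $\partial_x \mapsto \frac{1}{\sigma\sqrt{t}\sqrt{x}}\partial_y$, integrations by parts in $y$ and then in $u$ whose boundary terms are killed by the decay of the $\eta^{(i)}$ and by the vanishing of $(u-u^2)^m$ at $u\in\{0,1\}$, the recursion~\eqref{recursive_coeff_formula} read off from the resulting coefficients of $y^j\eta^{(j)}$, the diagonal telescoping for $c_{m,m}$, and the final estimate obtained by testing the representation on monomials (your value $\int_{\R}\eta^*_i=\frac{(2i-1)!}{((i-1)!)^2}$ matches the paper's computation of the term it calls $A$). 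One small correction to the induction step: the factorization that actually makes the singularity cancel is $\frac{(2u-1)\sigma\sqrt{t}y}{2\sqrt{x}}=\frac{2u-1}{\sigma\sqrt{t}\sqrt{x}}\,\partial_y w-(2u-1)^2$, so that the $-(2u-1)^2$ piece merges with the regular part via $1-(2u-1)^2=4(u-u^2)$ and the $\partial_y w$ piece retains the factor $(2u-1)$ needed for the subsequent $u$-integration by parts; the identity you quote, once multiplied by its prefactor $y/(2x)$, would leave a residual $\frac{\sigma^2 t y^2}{4x}f^{(m+1)}(w)$ that does not reorganize, although the other identities you cite show you have the correct manipulation in mind.
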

Let us stress here two things that are crucial in~\eqref{estimate_X1_functional}: the same norm is used in both sides, and the sharp time dependence of the multiplicative constant $(1+Ct)$. These properties are used in the proof of Proposition~\ref{prop_H2_sch} to get~\eqref{H2_bar}.

\begin{proof}
  We first consider $m=1$ and $f\in\CpolKL{M}{L}$. From the symmetry of $Y$, we have the equality $\E[f(X_1(\sqrt{t}Y,x))] = \E[f(X_1(\sqrt{t}Y,x))+ f(X_1(-\sqrt{t}Y,x))]/2$ and using the notation $\psi^{\pm}_f(x,y) = f(x+\sigma\sqrt{t}y\sqrt{x}+ \sigma^2ty^2/4) \pm f(x-\sigma\sqrt{t}y\sqrt{x}+ \sigma^2ty^2/4)$ we can write,
  \begin{equation*}
    \partial_x\E[f(X_1(\sqrt{t}Y,x))] = \frac{1}{2} \int_{-\infty}^\infty \partial_x\psi^+_f(x,y) \eta(y)dy.
  \end{equation*}
  One derivation and a little of algebra show that
  \begin{align*}
    \partial_x \psi^+_f(x,y) & = (1+ \frac{\sigma\sqrt{t}y}{2\sqrt{x}}) f'(x+\sigma\sqrt{t}y\sqrt{x}+ \sigma^2ty^2/4)+ (1- \frac{\sigma\sqrt{t}y}{2\sqrt{x}}) f'(x-\sigma\sqrt{t}y\sqrt{x}+ \sigma^2ty^2/4) \\
                             & = \frac{1}{\sigma\sqrt{t}\sqrt{x}}
    \begin{multlined}[t]
      \Big((\sigma^2 t y/2+ \sigma\sqrt{t}\sqrt{x}) f'(x+\sigma\sqrt{t}y\sqrt{x}+ \sigma^2ty^2/4) \\
      - (\sigma^2 t y/2- \sigma\sqrt{t}\sqrt{x}) f'(x-\sigma\sqrt{t}y\sqrt{x}+ \sigma^2ty^2/4)\Big)
    \end{multlined}                                                                                                                                                                              \\
                             & = \frac{1}{\sigma\sqrt{t}\sqrt{x}} \Big(\partial_y [f(x+\sigma\sqrt{t}y\sqrt{x}+ \sigma^2ty^2/4)] -  \partial_y [f(x-\sigma\sqrt{t}y\sqrt{x}+ \sigma^2ty^2/4)] \Big)         \\
                             & = \frac{\partial_y \psi^-_f(x,y)}{\sigma\sqrt{t}\sqrt{x}}.
  \end{align*}
  Integrating by parts in the variable $y$, observing that the boundary term vanishes since $|\eta(y)|=_{|y|\to \infty}o(|y|^{-2L})$ and $f(z)=_{z\to \infty}O(z^L)$, one has
  \begin{align*}
    \partial_x\E[f(X_1(\sqrt{t}Y,x))] & = - \frac{1}{2} \int_{-\infty}^\infty \frac{ \psi^-_f(x,y) \eta'(y)}{\sigma\sqrt{t}\sqrt{x}} dy            \\
                                      & = -  \int_{-\infty}^\infty \int_0^1 f'(x+(2u-1)\sigma\sqrt{t}y\sqrt{x}+ \sigma^2ty^2/4)  \eta'(y)y \,dudy  \\
                                      & =  \int_{-\infty}^\infty \int_0^1 f'(x+(2u-1)\sigma\sqrt{t}y\sqrt{x}+ \sigma^2ty^2/4)  (-\eta'(y)y) \,dudy
  \end{align*}
  since $\partial_u f(x+(2u-1)\sigma\sqrt{t}y\sqrt{x}+ \sigma^2ty^2/4) = 2\sigma\sqrt{t}y\sqrt{x} f'(x+(2u-1)\sigma\sqrt{t}y\sqrt{x}+ \sigma^2ty^2/4) $.
  In order to simplify the notation, we define $w(u,x,y) := x+(2u-1)\sigma\sqrt{t}y\sqrt{x}+ \sigma^2ty^2/4$ and we write explicitly the partial derivatives of $w$
  \begin{equation}\label{deriv_w}
    \begin{cases}
      \partial_u w(u,x,y) = 2\sigma\sqrt{t}y\sqrt{x},                  \\
      \partial_x w(u,x,y) = 1+\frac{(2u-1)\sigma\sqrt{t}y}{2\sqrt{x}}, \\
      \partial_y w(u,x,y) = (2u-1)\sigma\sqrt{t}\sqrt{x}+\frac{\sigma^2 t y}{2},
    \end{cases}
  \end{equation}
  and we define  for $s:[0,1]\times \R \rightarrow \R$
  \begin{equation}
    I^{(l)}_{m,n}(s)= \int_{-\infty}^\infty \int_0^1 s(u,y) (u^2-u)^{m-1}f^{(l)}(w(u,x,y))  \bigg(\sum_{j=1}^n c_{j,n}  y^j\eta^{(j)}(y)\bigg) \,dudy,
  \end{equation}
  so we can rewrite \eqref{repres_X1_functional} as $\partial^m_x\E[f(X_1(\sqrt{t}Y,x))] = I^{(m)}_{m,m}(1)$ where the 1 in the argument has to be intended as the constant map identically equal to 1. So far, we have shown that formula \eqref{repres_X1_functional} is true for $m=1$, we take now $m\geq2$ and we prove it by induction over $m$ assuming that the result holds for $m-1$. We differentiate Eq.~\eqref{repres_X1_functional}
  for $m-1$ and use the second equality of \eqref{deriv_w} to get
  \begin{equation}
    \partial^m_x\E[f(X_1(\sqrt{t}Y,x))]  =I^{(m)}_{m-1,m-1}(1) + I^{(m)}_{m-1,m-1}\left(\frac{(2u-1)\sigma\sqrt{t}y}{2\sqrt{x}}\right).
  \end{equation}
  Then, from the third equality of \eqref{deriv_w}, one has $\frac{(2u-1)\sigma\sqrt{t}y}{2\sqrt{x}}= \frac{(2u-1)}{\sigma\sqrt{t}\sqrt{x}}\partial_yw - (2u-1)^2$ and so
  \begin{align*}
    \partial^m_x\E[f(X_1(\sqrt{t}Y,x))] & =I^{(m)}_{m-1,m-1}(1- (2u-1)^2) + I^{(m)}_{m-1,m-1}\left(\frac{(2u-1)}{\sigma\sqrt{t}\sqrt{x}} \partial_y w(u,x,y)\right) \\
                                        & =-4 I^{(m)}_{m-1,m-1}(u^2-u) + I^{(m)}_{m-1,m-1}\left(\frac{(2u-1)}{\sigma\sqrt{t}\sqrt{x}} \partial_y w(u,x,y)\right)    \\
                                        & =-4 I^{(m)}_{m,m-1}(1) + I^{(m)}_{m-1,m-1}\left(\frac{(2u-1)}{\sigma\sqrt{t}\sqrt{x}} \partial_y w(u,x,y)\right).
  \end{align*}
  We work on the term $I^{(m)}_{m-1,m-1}(\frac{(2u-1)}{\sigma\sqrt{t}\sqrt{x}} \partial_y w(u,x,y))$. We use first an integration by parts in the variable $y$ and subsequently one in the variable $u$.  The boundary terms vanishes by using the hypothesis on $\eta$ since $|f^{(m)}(w(u,x,y))|\le \|f\|_{m,L}(1+w(u,x,y)^L)\underset{|y|\to \infty }=O(y^{2L})$ and to the fact that the function $u^2-u$ vanishes in $0$ and $1$. One gets
    {\small	\begin{multline}
        \int_0^1\int_{-\infty}^\infty   \frac{(2u-1)(u^2-u)^{m-2}}{\sigma\sqrt{t}\sqrt{x}}f^{(m)}(w(u,x,y))\partial_y w(u,x,y)\bigg(  \sum_{j=1}^{m-1} c_{j,m-1}  y^j\eta^{(j)}(y) \bigg)dydu \\
        \begin{aligned}
           & =-\int_{-\infty}^\infty \int_0^1  \frac{(2u-1)(u^2-u)^{m-2}}{\sigma\sqrt{t}\sqrt{x}}f^{(m-1)}(w(u,x,y))  \bigg(\sum_{j=1}^{m-1} c_{j,m-1}  (jy^{j-1}\eta^{(j)}(y)+y^{j}\eta^{(j+1)}(y)) \bigg)dudy  \\
           & =\int_{-\infty}^\infty \int_0^1 (u^2-u)^{m-1} f^{(m)}(w(u,x,y)) \bigg( \sum_{j=1}^{m-1} \frac{2}{m-1}c_{j,m-1}  (jy^{j}\eta^{(j)}(y)+y^{j+1}\eta^{(j+1)}(y)\bigg) \,dudy                            \\
           & =\int_{-\infty}^\infty \int_0^1 (u^2-u)^{m-1} f^{(m)}(w(u,x,y))  \frac{2}{m-1}\bigg(\sum_{j=1}^{m-1} jc_{j,m-1}  y^{j}\eta^{(j)}(y) +\sum_{j=2}^{m} c_{j-1,m-1}y^{j}\eta^{(j)}(y)\bigg) \bigg)dudy.
        \end{aligned}
      \end{multline}}
  Rewriting the last equality for $\partial^m_x\E[f(X_1(\sqrt{t}Y,x))]$, one has
    {\small \begin{multline}
        \partial^m_x\E[f(X_1(\sqrt{t}Y,x))] = \int_{-\infty}^\infty \int_0^1 (u^2-u)^{m-1} f^{(m)}(w(u,x,y))  \bigg(-4\sum_{j=1}^{m-1} c_{j,m-1}  y^{j}\eta^{(j)}(y)\bigg)\,dudy +\\
        + \int_{-\infty}^\infty \int_0^1 (u^2-u)^{m-1} f^{(m)}(w(u,x,y))  \frac{2}{m-1}\bigg(\sum_{j=1}^{m-1} jc_{j,m-1}  y^{j}\eta^{(j)}(y) +\sum_{j=2}^{m} c_{j-1,m-1}y^{j}\eta^{(j)}(y)\bigg) \,dudy\\
        = \begin{aligned}[t]
           & \int_{-\infty}^\infty \int_0^1 (u^2-u)^{m-1} f^{(m)}(w(u,x,y))\bigg( \big(\frac{2}{m-1} -4\big)c_{1,m-1} y\eta^{(1)}(y)+                                                 \\
           & \sum_{j=2}^{m-1}\Big(\big(\frac{2j}{m-1} -4\big)c_{j,m-1} +\frac{2}{m-1}c_{j-1,m-1}\Big) y^{j}\eta^{(j)}(y) + \frac{2}{m-1} c_{m-1,m-1} y^{m}\eta^{(m)}(y)\bigg) \,dudy,
        \end{aligned}
      \end{multline}}
  which proves the representation \eqref{repres_X1_functional}. \blue{Since  $c_{1,1}=-1$ and $c_{m,m}=-\frac{2}{m-1} c_{m-1,m-1}$ for $m\ge 2$, we get $c_{m,m}=-\frac{2^{m-1}}{(m-1)!}$ for $m\ge 1$.}

  We are now able to prove the estimate using this representation. Defining $\eta^*_m(y) = (-1)^{m-1} \sum_{j=0}^m c_{j,m}y^j\eta{(j)}(y)$, that is nonnegative for all $y$ by hypothesis, one has
  \begin{align*}
    |\partial^m_x\E[f(X_1(\sqrt{t}Y,x))]| & \leq \int_0^1 (u-u^2)^{m-1}  \int_{-\infty}^\infty   |f^{(m)}(w(u,x,y))| \eta^*_m(y)dydu                     \\
                                          & \leq \|f\|_{m,L}\int_0^1 (u-u^2)^{m-1}  \int_{-\infty}^\infty (1+w(u,x,y)^L) \eta^*_m(y)dydu                 \\
                                          & =  \|f\|_{m,L}\underbrace{\int_0^1 (u-u^2)^{m-1}  \int_{-\infty}^\infty  \eta^*_m(y)dydu}_{A}                \\
                                          & \quad+ \|f\|_{m,L}\underbrace{\int_0^1 (u-u^2)^{m-1}  \int_{-\infty}^\infty w(u,x,y)^L \eta^*_m(y)dydu}_{B}.
  \end{align*}
  The double integral $A$ can be seen by means of representation \eqref{repres_X1_functional} with $f(x)=\frac{x^m}{m!}$ ($f^{(m)}\equiv 1$) as
  \begin{align*}
    A = \partial^m_x\E\left[\frac{X_1(\sqrt{t}Y,x)^m}{m!}\right] & =\frac{1}{m!} \partial^m_x\sum_{j=0}^{m}{2m\choose 2j} x^{m-j}\left(\frac{\sigma\sqrt{t}}{2}\right)^j \E[Y^{2j}]=1,
  \end{align*}
  by using  the symmetry of the density $\eta$.	  In the same way, $B$ can be seen by means of the representation as
  \begin{align*}
    B & = \partial^m_x\E\left[\frac{L!}{(L+m)!}X_1(\sqrt{t}Y,x)^{L+m}\right]                                                               \\
      & = \partial^m_x\sum_{j=0}^{L+m}{2(L+m)\choose 2j} \frac{L!x^{L+m-j}}{(L+m)!}\left(\frac{\sigma\sqrt{t}}{2}\right)^{2j} \E[Y^{2j}]   \\
      & = \sum_{j=0}^{L}{2(L+m)\choose 2j} \frac{L!(L+m-j)!}{(L-j)!(L+m)!}x^{L-j}\left(\frac{\sigma\sqrt{t}}{2}\right)^{2j} \E[Y^{2j}]     \\
      & =x^L +t\sum_{j=1}^{L}{2(L+m)\choose 2j} \frac{L!(L+m-j)!}{(L-j)!(L+m)!}x^{L-j}\left(\frac{\sigma}{2}\right)^{2j}t^{j-1} \E[Y^{2j}] \\
      & \leq x^L +t(1+x^L)(1+\E[Y^{2L}])\sum_{j=1}^{L}{2(L+m)\choose 2j} \left(\frac{\sigma c_T}{2}\right)^{2j}                            \\
      & \leq x^L +Ct(1+x^L)
  \end{align*}
  where $c_T= \max(1,T)$ and $C = \frac{1}{2}\left((1+\frac{\sigma c_T}{2})^{2(L+k)}+(1-\frac{\sigma c_T}{2})^{2(L+k)}\right)(1+\E[Y^{2L}])$. Putting parts $A$ and $B$ back together one has
  \begin{equation}
    \partial^m_x\E[f(X_1(\sqrt{t}Y,x))] \leq \|f\|_{m,L} (1 +x^L + Ct(1+x^L)) = (1 +x^L)(1 +Ct)\|f\|_{m,L}  ,
  \end{equation}
  and this proves the desired norm inequality.
\end{proof}

\begin{lemma}\label{gaussian_eta_positivity}
  Let $\eta(y)=\frac 1 {\sqrt{2\pi}} e^{-y^2/2}$ be the density of a standard normal variable. Then, we have for $m\ge 1$:
  \begin{equation}\label{gauss_identity}
    \eta^*_m(y):= (-1)^{m-1}\sum_{j=1}^m  c_{j,m} y^j\eta^{(j)}(y) = -c_{m,m}y^{2m}\eta(y),
  \end{equation}
  so, in particular $\eta^*_m(y)\ge 0$ for all $y\in\R$.
\end{lemma}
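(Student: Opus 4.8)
The plan is to reformulate the claim in terms of the auxiliary quantity $T_m(y):=\sum_{j=1}^m c_{j,m}\,y^j\eta^{(j)}(y)$, so that $\eta^*_m=(-1)^{m-1}T_m$, and to prove by induction on $m$ the equivalent identity $T_m(y)=(-1)^m c_{m,m}\,y^{2m}\eta(y)$. For the base case $m=1$, since $c_{1,1}=-1$ and $\eta'(y)=-y\eta(y)$, one computes directly $T_1(y)=c_{1,1}\,y\,\eta'(y)=y^2\eta(y)$, which matches $(-1)^1 c_{1,1}\,y^2\eta(y)$.

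The heart of the argument is to turn the combinatorial recurrence~\eqref{recursive_coeff_formula} for the $c_{j,m}$ into a differential recurrence for $T_m$. First I would substitute~\eqref{recursive_coeff_formula} into the definition of $T_m$, noting that the term $(\tfrac{2j}{m-1}-4)c_{j,m-1}$ contributes for $1\le j\le m-1$ and the term $\tfrac{2}{m-1}c_{j-1,m-1}$ for $2\le j\le m$. After shifting the index $j\mapsto j+1$ in the second contribution, the $-4$ part reconstitutes $-4T_{m-1}$, while the remaining terms group as $\tfrac{2}{m-1}\sum_{j=1}^{m-1}c_{j,m-1}\big(j\,y^j\eta^{(j)}+y^{j+1}\eta^{(j+1)}\big)$. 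The key observation is the purely algebraic identity $j\,y^j\eta^{(j)}+y^{j+1}\eta^{(j+1)}=y\frac{d}{dy}\big(y^j\eta^{(j)}\big)$, which lets me recognize the whole group as $\tfrac{2}{m-1}\,y\,T_{m-1}'(y)$. This yields the clean recurrence
\[
T_m=-4\,T_{m-1}+\frac{2}{m-1}\,y\,T_{m-1}'.
\]
This reduction is the step I expect to be the main obstacle: one must correctly handle the two boundary indices $j=1$ and $j=m$ in~\eqref{recursive_coeff_formula} and spot that the cross terms assemble into a total $y$-derivative.

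It then remains to plug in the induction hypothesis and use the Gaussian-specific relation $\eta'=-y\eta$. Writing $T_{m-1}=A\,y^{2m-2}\eta$ with $A=(-1)^{m-1}c_{m-1,m-1}$, one gets $y\,T_{m-1}'=A\big((2m-2)y^{2m-2}\eta-y^{2m}\eta\big)$, and since $\frac{2(2m-2)}{m-1}=4$ the two $y^{2m-2}$ contributions cancel exactly, leaving $T_m=-\frac{2}{m-1}A\,y^{2m}\eta$. Finally, the relation $c_{m,m}=\frac{2}{m-1}c_{m-1,m-1}$ — obtained from~\eqref{recursive_coeff_formula} at $j=m$ (where the first term drops out), equivalently read off from the closed form $c_{m,m}=-\frac{2^{m-1}}{(m-1)!}$ — gives $-\frac{2}{m-1}A=(-1)^m c_{m,m}$, completing the induction. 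Undoing the substitution yields $\eta^*_m=(-1)^{m-1}T_m=-c_{m,m}\,y^{2m}\eta$, which is~\eqref{gauss_identity}; and since $c_{m,m}=-\frac{2^{m-1}}{(m-1)!}<0$, the right-hand side is nonnegative for all $y\in\R$.
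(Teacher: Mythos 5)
Your proof is correct, and it takes a genuinely different route from the paper's. The paper proves the identity indirectly: it inducts on the integral representation~\eqref{repres_X1_functional}, performs one more differentiation step in $x$ exactly as in the proof of Lemma~\ref{regular_density}, then specializes to $f_L(x)=\frac{L!}{(L+m)!}x^{L+m}$ and $x=0$ to conclude that every even moment of $\eta^*_m(y)+c_{m,m}y^{2m}\eta(y)=P_{2m}(y)\eta(y)$ against Lebesgue measure vanishes, whence the even polynomial $P_{2m}$ is identically zero. You instead prove the pointwise identity directly by induction on $T_m(y)=\sum_{j=1}^m c_{j,m}y^j\eta^{(j)}(y)$: substituting the recurrence~\eqref{recursive_coeff_formula} and recognizing $j\,y^j\eta^{(j)}+y^{j+1}\eta^{(j+1)}=y\frac{d}{dy}\bigl(y^j\eta^{(j)}\bigr)$ yields the clean relation $T_m=-4T_{m-1}+\frac{2}{m-1}yT_{m-1}'$, after which only $\eta'=-y\eta$ and $c_{m,m}=\frac{2}{m-1}c_{m-1,m-1}$ are needed. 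Your argument is more elementary and self-contained (it uses nothing beyond the definition of the $c_{j,m}$ and the Gaussian ODE, and in particular does not require the integral representation or a moment-determinacy step), while the paper's version recycles the differentiation machinery it has already set up and foreshadows the moment-based characterization in Theorem~\ref{thm_carac_gauss}. One small point in your favor: you correctly read off $c_{m,m}=\frac{2}{m-1}c_{m-1,m-1}$ from~\eqref{recursive_coeff_formula} (the paper's in-text statement of this relation carries a spurious minus sign, inconsistent with its own closed form $c_{m,m}=-\frac{2^{m-1}}{(m-1)!}$).
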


\begin{proof}
  For $m=1$, \eqref{gauss_identity} is clearly true since $\eta'(y)=-y\eta(y)$.
  We now take $m\geq2$, $M\geq m$ and we suppose \eqref{gauss_identity} true for $m-1$:  for all $f\in \CpolKL{M}{L}$ and $x\in\R_+$, we have
  \begin{equation*}
    \int_{-\infty}^\infty \int_0^1 (u-u^2)^{m-2} f^{(m-1)}(w(u,x,y)) \big(\eta^*_{m-1}(y)+c_{m-1,m-1}y^{2m-2}\eta(y)\big)dudy=0.
  \end{equation*}
  Doing one differentiation step with respect to~$x$ like in the proof of Lemma~\ref{regular_density} and using that $\eta'(y)=-y\eta(y)$, we obtain
  \begin{equation*}
    \int_{-\infty}^\infty \int_0^1 (u-u^2)^{m-1} f^{(m)}(w(u,x,y)) \big(\eta^*_m(y)+c_{m,m}y^{2m}\eta(y)\big)dudy=0.
  \end{equation*}
  By choosing $f_L(x):= \frac{L!}{(L+m)!}x^{L+m}$ for $L\in \N$, we get for all $L\in \N$, $x \in \R_+$,
  \begin{equation*}
      \int_{-\infty}^\infty \int_0^1 (u-u^2)^{m-1} w(u,x,y)^L \big(\eta^*_m(y)+c_{m,m}y^{2m}\eta(y)\big)dudy=0.
  \end{equation*}
  We now take $x=0$ so that $w(u,0,y)=\frac{\sigma^2t}4 y^2$ and then
  $$ \int_{-\infty}^\infty y^{2L} \big(\eta^*_m(y)+c_{m,m}y^{2m}\eta(y)\big)dy=0, \ L\in \N.$$ We remark also that
  $\eta^*_m(y)=\big(\sum_{j=1}^m (-1)^{m+j-1}c_{j,m}y^jH_j(y)\big)\eta(y)$, where $H_j$ is the $j^{\text{th}}$ Hermite polynomial function (defined by $\eta^{(j)}(y)=(-1)^j H_j(y)\eta(y)$). Thus, $\eta^*_m(y)+c_{m,m}y^{2m}\eta(y)=P_{2m}(y)\eta(y)$ where $P_{2m}$ is  an even polynomial function of degree $2m$. We therefore obtain $\int_{-\infty}^\infty  y^{l} P_{2m}(y) \eta(y) dy=0$ for all $l\in \N$, which gives $P_{2m}=0$ and thus the claim.
\end{proof}

\begin{remark}
  Lemma~\ref{gaussian_eta_positivity} gives a remarkable formula of the monomial of order $2m$ $m\in\N^*$ in terms of the first $m$ Hermite polynomials multiplied respectively by the first $m$ monomials
  \begin{equation}\label{Hermite_inv_formula}
    y^{2m} = \sum_{j=1}^m (-1)^{m+j}\frac{c_{j,m}}{c_{m,m}}y^jH_j(y).
  \end{equation}
\end{remark}

The next result gives a kind of reciprocal result to Lemma~\ref{gaussian_eta_positivity}. It explains why we consider a normal random variable for~$Y$ in Theorem~\ref{thm_main}, since we use Proposition~\ref{prop_H2_sch} for any $M\in \N$.
\begin{theorem}\label{thm_carac_gauss}
  Let $Y$ be a symmetric random variable with a $\mathcal{C}^\infty$ probability density function~$\eta$ such that $\E[Y^2]=1$, $\E[Y^4]=3$ and $\eta_m^*\ge 0$ for all $m\ge 1$. Then, $Y\sim \mathcal{N}(0,1)$.
\end{theorem}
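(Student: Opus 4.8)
The plan is to convert the pointwise positivity of all the $\eta^*_m$ into a \emph{complete monotonicity} statement about a single auxiliary function, and then to invoke Bernstein's theorem to pin down $\eta$ up to a Gaussian scale mixture, closing the argument with the two moment conditions.

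First I would replace the scalar recursion~\eqref{recursive_coeff_formula} for the coefficients $c_{j,m}$ by an operator recursion for the functions $\eta^*_m$ themselves. Writing $D=y\frac{d}{dy}$ and grouping the two sums in $\eta^*_m=(-1)^{m-1}\sum_{j=1}^m c_{j,m}\,y^j\eta^{(j)}$ exactly as in the index shift used in the proof of Lemma~\ref{regular_density}, one checks that the terms $\sum_j c_{j,m-1}y^{j+1}\eta^{(j+1)}$ cancel, leaving
$$\eta^*_m=\frac{2}{m-1}\bigl(2(m-1)-D\bigr)\eta^*_{m-1},\qquad m\ge2,\qquad \eta^*_1=-y\eta'.$$
This identity is the crux; the only real work is the bookkeeping in the cancellation, which mirrors the computation already carried out for Lemma~\ref{regular_density}.

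Next I would diagonalize $D$. Since $y^{2m}$ is an eigenfunction of $D$ with eigenvalue $2m$, the substitution $\eta^*_m=\frac{2^{m-1}}{(m-1)!}y^{2m}h_m$ collapses the recursion to $h_m=-\frac1y h_{m-1}'$ with $h_1=-\eta'/y$; hence $h_m=\bigl(-\frac1y\frac{d}{dy}\bigr)^m\eta$, and crucially $\eta^*_m\ge0$ is equivalent to $h_m\ge0$. The change of variable $t=y^2/2$, $\Theta(t):=\eta(\sqrt{2t})$, turns $-\frac1y\frac{d}{dy}$ into $-\frac{d}{dt}$, so that $h_m=(-1)^m\Theta^{(m)}$. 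Thus the hypothesis $\eta^*_m\ge0$ for all $m\ge1$, together with $\Theta\ge0$, says precisely that $(-1)^m\Theta^{(m)}(t)\ge0$ for all $m\ge0$ and all $t>0$: the function $\Theta$ is completely monotone on $(0,\infty)$. By Bernstein's theorem, $\Theta(t)=\int_{[0,\infty)}e^{-\lambda t}\,d\mu(\lambda)$ for some positive measure $\mu$, and since $\eta$ is a probability density we have $\Theta(t)\to0$ as $t\to\infty$, which forces $\mu(\{0\})=0$. Translating back, $\eta(y)=\int_0^\infty e^{-\lambda y^2/2}\,d\mu(\lambda)$ is a scale mixture of centered Gaussians, i.e. $Y\stackrel{d}{=}Z/\sqrt{\Lambda}$ with $Z\sim\mathcal N(0,1)$ independent of a positive variable $\Lambda$. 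I would then finish with the moment constraints: $\E[Y^2]=\E[1/\Lambda]=1$ and $\E[Y^4]=3\,\E[1/\Lambda^2]=3$ give $\E[1/\Lambda]=\E[1/\Lambda^2]=1$, whence $\Var(1/\Lambda)=0$, so $\Lambda\equiv1$ and $Y\sim\mathcal N(0,1)$.

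I expect the main obstacle to be the reduction itself — establishing the operator recursion and the diagonalizing substitution cleanly, i.e. recognizing that the seemingly complicated family $(\eta^*_m)$ is governed by the single completely monotone function $\Theta$. Once this is in place the analytic input (Bernstein's theorem) and the moment computation are routine; one should only take care to justify $\mu(\{0\})=0$ from the integrability of $\eta$, and to justify the interchange of integration in the moment identities by Tonelli, all terms being nonnegative.
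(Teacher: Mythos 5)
Your proposal is correct and follows essentially the same route as the paper: the identity you derive via the operator recursion, namely $\eta^*_m(\sqrt{2t})=\tfrac{2^{m-1}}{(m-1)!}(2t)^m(-1)^m\Theta^{(m)}(t)$, is exactly the formula the paper establishes by induction in its Appendix C (there written as $\partial_x^m[\eta(\sqrt{x})]=(-1)^m\tfrac{(m-1)!}{2^{2m-1}}x^{-m}\eta^*_m(\sqrt{x})$), after which both arguments invoke Bernstein's theorem to represent $\eta$ as a Gaussian scale mixture and conclude from $\E[Y^2]=\E[Y^4]/3=1$ — your zero-variance step being the paper's equality case of Cauchy--Schwarz. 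The only cosmetic difference is that you organize the induction as an operator recursion and a diagonalizing substitution rather than tracking the coefficients $c_{j,m}$ directly.
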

\begin{proof}
  By Corollary~\ref{cor_density_etam}, there exists a positive Borel measure~$\mu$ such that
  $\eta(x)=\int_0^\infty e^{-tx^2}\mu(dt)$. Since $\int_{\R}\eta=1$, we get $\int_0^\infty \sqrt{\pi/t}\mu(dt)=1$ and then   $\eta(x)=\int_0^\infty \frac{e^{-tx^2}}{\sqrt{\pi/t}} \tilde{\mu}(dt)$ with $\tilde{\mu}(dt)=\sqrt{\pi/t} \mu(dt)$ being a probability measure on~$\R_+$. We have $\E[Y^2]=\int_0^\infty \int_{\R} x^2\frac{e^{-tx^2}}{\sqrt{\pi/t}} dx \tilde{\mu}(dt)=\int_0^\infty \frac{1}{2t} \tilde{\mu}(dt)$ and $\E[Y^4]=\int_0^\infty 3\left(\frac{1}{2t}\right)^2 \tilde{\mu}(dt)$. Therefore, we have
  $$\int_0^\infty \frac{1}{2t} \tilde{\mu}(dt)=\int_0^\infty \left(\frac{1}{2t}\right)^2 \tilde{\mu}(dt) =1.$$
  The equality condition in the Cauchy-Schwarz inequality implies that $ \tilde{\mu}(dt) =\delta_{1/2}(dt)$, i.e. $Y$ is a standard normal variable.
\end{proof}

\subsection{Proof of Theorem~\ref{thm_main}}\label{Subsec_thm_main}
We prove the result for $\hat{P}^{2,n}$. By assumption, $f\in  \CpolKL{18}{L}$, for $L\ge 18$ sufficiently large. From~\eqref{devt_erreur}, we have
\begin{align*}
  P_Tf-\hat{P}^{2,n}f & =  \sum_{k=0}^{n-1}Q_1^{[n-(k+1)]}[P_{h_1}-Q_2^{[n]}]Q_1^{[k]}f \\ &\quad+\sum_{k=0}^{n-1}\sum_{k'=0}^{n-(k+2)}P_{(n-(k+k'+2))h_1}[P_{h_1}-Q_1] Q_1^{[k']}[P_{h_1}-Q_1] Q_1^{[k]},
\end{align*}
with $h_l=T/n^l$.
Using Proposition~\ref{prop_H2_NV} three times and Proposition~\ref{H1_bar_mL} twice, we get for $k\in \{0,\dots,n-1\},k' \in \{0,\dots,n-(k+2)\}$:
\begin{align*}
  \|P_{(n-(k+k'+2))h_1}[P_{h_1}-Q_1] Q_1^{[k']}[P_{h_1}-Q_1] Q_1^{[k]} f\|_{0,L+6} & \le C\|[P_{h_1}-Q_1] Q_1^{[k']}[P_{h_1}-Q_1]  Q_1^{[k]}f\|_{0,L+6} \\
                                                                                   & \le C h_1^3\| Q_1^{[k']}[P_{h_1}-Q_1] Q_1^{[k]}f\|_{6,L+3}         \\
                                                                                   & \le  C h_1^3\|[P_{h_1}-Q_1] Q_1^{[k]}f \|_{6,L+3}                  \\
                                                                                   & \le Ch_1^6\|Q_1^{[k]}f \|_{18,L} \le Ch_1^6\|f \|_{18,L}.
\end{align*}
For the other term, we write $P_{h_1}-Q_2^{[n]}=\sum_{k'=0}^{n-1}P_{(n-(k'+1))h_2}[P_{h_2}-Q_2]Q_2^{[k']}$ and get for $k,k'\in \{0,\dots,n-1\}$ by using  Proposition~\ref{prop_H2_NV}, Proposition~\ref{H1_bar_mL} and Lemma~\ref{lem_estimnorm}:
\begin{align*}
  \|Q_1^{[n-(k+1)]}P_{(n-(k'+1))h_2}[P_{h_2}-Q_2]Q_2^{[k']}Q_1^{[k]}f\|_{0,L+6} & \le C\|[P_{h_2}-Q_2]Q_2^{[k']}Q_1^{[k]}f\|_{0,L+6}   \\
                                                                                & \le C h_2^3 \|Q_2^{[k']}Q_1^{[k]}f\|_{6,L+3}         \\
                                                                                & \le  C h_2^3 \|f\|_{6,L+3}\le  C h_2^3 \|f\|_{18,L}.
\end{align*}
This gives
$$\|P_Tf-\hat{P}^{2,n}f\|_{0,L+6}\le C \|f\|_{18,L} n^2(h_1^6+h_2^3) \le C \|f\|_{18,L} n^{-4},$$
and in particular $P_Tf(x)-\hat{P}^{2,n}f(x)=O(n^{-4})$ for any $x\ge 0$.

We now consider $f\in \mathcal{C}^\infty$ with derivatives of polynomial growth. Therefore, for any $m\in \N$, it exists $L \ge m$ sufficiently large, such that $f\in  \CpolKL{m}{L}$. We can then apply \cite[Theorem~3.10]{AB} to get that for some functions ${\bf m},\ell:\N^*\to \N^*$, we have $\|P_Tf-\hat{P}^{\nu,n}f\|_{0,L+\ell(\nu)}\le C\|f\|_{{\bf m}(\nu),L}n^{-2\nu}$ \blue{for $L\ge {\bf m}(\nu)$}, which gives the claim.


\section{Simulations results}\label{Simulations}
In order to present some numerical test, we  first explain how to implement the approximations $\hat{P}^{2,n}$ and $\hat{P}^{3,n}$ defined respectively by~\eqref{def_P_2} and~\eqref{def_P_3} (let us recall here that $\hat{P}^{1,n}$ is the approximation obtained on the regular time grid $\Pi^0 = \{kT/n, 0 \leq k \leq n\}$). We consider a general case of a scheme that can be written as a function of the starting point, the time step, the Brownian increment and an independent random variable, i.e.
$$Q_lf(x)=\E[\varphi(x,h_l,W_{h_l},V)].$$
The second order scheme for the CIR~\eqref{NV_scheme}  falls into this framework as well as the second order scheme for the Heston model~\eqref{H2S} that we introduce below. As illustrated in \cite{AB} the approximation~$\hat{P}^{2,n}$ is the simplest case for the implementation. It consists in the simulation of two starting schemes on the uniform time grid $\Pi^0$ and on the random grid : $\Pi^1 = \Pi^0 \cup  \{ \kappa T/n + k'T/n^2 , 1 \leq k' \leq n-1 \}$, where $\kappa$ is an independent uniform random variable on $\{0,\ldots,n-1\}$. We denote by $\hat{X}^{n,0}$ the scheme on $\Pi^0$
\begin{align}
  \hat{X}^{n,0}_0          & = x, \notag                                                                                \\
  \hat{X}^{n,0}_{(k+1)h_1} & = \varphi(\hat{X}^{n,0}_{kh_1}, h_1, W_{(k+1)h_1} - W_{kh_1}, V_k ),\quad 0\leq k\leq n-1, \label{Scheme0}
\end{align}
and by  $\hat{X}^{n,1}$ the scheme on $\Pi^1$:
\begin{align*}
  \hat{X}^{n,1}_{kh_1}                 & = \hat{X}^{n,0}_{kh_1},                                                                            & \quad   0\leq k \leq \kappa,   \\
  \hat{X}^{n,1}_{\kappa h_1+(k'+1)h_2} & = \varphi(\hat{X}^{n,1}_{\kappa h_1+k'h_2}, h_2, W_{\kappa h_1+(k'+1)h_2} - W_{\kappa h_1+k'h_2},V_{n+k'}), & \quad 0 \leq k' \leq n-1,      \\
  \hat{X}^{n,1}_{(k+1)h_1}             & = \varphi(\hat{X}^{n,1}_{kh_1}, h_1, W_{(k+1)h_1} - W_{kh_1},V_k),                                     & \quad \kappa+1 \leq k\leq n-1.
\end{align*}
Here, $(V_k)_{k\ge 0}$ is an i.i.d. sequence with the same law as $V$. Finally, we can give the following probabilistic representation
\begin{align}
  \hat{P}^{2,n}f & = Q^{[n]}_1f+n\E[Q_1^{[n-(\kappa+1)]}[Q_2^{[n]}-Q_1]Q_1^{[\kappa]}f] \nonumber            \\
           & =\E[f(\hat{X}^{n,0}_T)] + n\E[f(\hat{X}^{n,1}_T) - f(\hat{X}^{n,0}_T)]. \label{P2n_indep} 
\end{align}
\blue{Let us stress here that it is crucial for the Monte-Carlo method to use the same underlying Brownian motion for $\hat{X}^{n,0}$ and $\hat{X}^{n,1}$. Thus, the variance of $n\left(f(\hat{X}^{n,1}_T) - f(\hat{X}^{n,0}_T)\right)$ is quite moderate. It is shown in~\cite[Appendix A]{AB} that this variance is bounded when using the Euler scheme for an SDE with smooth coefficients. The theoretical analysis of the variance in our framework is beyond the scope of the paper. We only check numerically how it evolves with respect to~$n$ on our experiments, see Table~\ref{Table_VAR_CIR1} below. 
}

The approximation $\hat{P}^{3,n}$ is more involved. Let $\kappa'$ be an independent uniform random variable on $\{0,\ldots,n-1\}$. We 
define the scheme  $\hat{X}^{n,2}$:
\begin{align*}
  \hat{X}^{n,2}_{kh_1}                 & = \hat{X}^{n,1}_{kh_1}, \quad \hat{X}^{n,2}_{\kappa h_1+k' h_2} = \hat{X}^{n,1}_{\kappa h_1+k'h_2},                                                                                0\leq k \leq \kappa,\ 0\leq k' \leq \kappa',   \\
  \hat{X}^{n,2}_{\kappa h_1+\kappa' h_2+(k''+1) h_3} & = \varphi(\hat{X}^{n,2}_{\kappa h_1+\kappa' h_2+k'' h_3}, h_3, W_{\kappa h_1+\kappa'h_2 +(k''+1)h_3} - W_{\kappa h_1+\kappa'h_2 +k''h_3} ,V_{2n+k''}),   \\
  &\phantom{= \varphi(\hat{X}^{n,2}_{\kappa h_1+\kappa' h_2+k'' h_3}, h_3, W_{\kappa h_1+\kappa'h_2 +(k''+1)h_3}) ********}0 \leq k'' \leq n-1,      \\
  \hat{X}^{n,2}_{\kappa h_1 +(k'+1)h_2}             & = \varphi(\hat{X}^{n,2}_{\kappa h_1 +k' h_2}, h_2, W_{\kappa h_1+(k'+1)h_2} - W_{\kappa h_1+k'h_2},V_{n+k'}),                                    \,  \kappa+1 \leq k'\leq n-1.\\
  \hat{X}^{n,2}_{(k+1)h_1}             & = \varphi(\hat{X}^{n,2}_{kh_1}, h_1, W_{(k+1)h_1} - W_{kh_1},V_k),                                       \kappa+1 \leq k\leq n-1.
\end{align*}
This is the scheme obtained on the time grid $\Pi^1 \cup  \{ \kappa T/n + \kappa' T/n^2 + k''T/n^3 , 1 \leq k'' \leq n-1 \}$.
We have $$\sum_{k=0}^{n-1}Q_1^{[n-(k+1)]}\left[\sum_{k'=0}^{n-1}Q_2^{[n-(k'+1)]}[Q_3^{[n]}-Q_2]Q_2^{[k']} \right] Q_1^{[k]}f =n^2\E[f(\hat{X}^{n,2}_T) - f(\hat{X}^{n,1}_T)].$$
We now explain how to calculate the second term in~\eqref{def_P_3}. Let $(\kappa_1,\kappa_2)$ be an independent random variable uniformly distributed on the set $\{(k_1,k_2):0\le k_1<k_2<n\}$. We define: 
\begin{align*}
  \hat{X}^{n,3}_{kh_1}                 & = \hat{X}^{n,0}_{kh_1},                                                                            & \quad   0\leq k \leq \kappa_1,   \\
  \hat{X}^{n,3}_{\kappa_1 h_1+(k'+1)h_2} & = \varphi(\hat{X}^{n,3}_{\kappa_1 h_1+k'h_2}, h_2, W_{\kappa h_1+(k'+1)h_2} - W_{\kappa h_1+k'h_2},V_{3n+k'}), & \quad 0 \leq k' \leq n-1,      \\
  \hat{X}^{n,3}_{(k+1)h_1}             & = \varphi(\hat{X}^{n,3}_{kh_1}, h_1, W_{(k+1)h_1} - W_{kh_1},V_k),                                     & \quad \kappa_1+1 \leq k\leq n-1,
\end{align*}
\begin{align*}
  \hat{X}^{n,4}_{kh_1}                 & = \hat{X}^{n,0}_{kh_1},                                                                            & \quad   0\leq k \leq \kappa_2,   \\
  \hat{X}^{n,4}_{\kappa_2 h_1+(k'+1)h_2} & = \varphi(\hat{X}^{n,4}_{\kappa_2 h_1+k'h_2}, h_2, W_{\kappa_2 h_1+(k'+1)h_2} - W_{\kappa_2 h_1+k'h_2},V_{4n+k'}), & \quad 0 \leq k' \leq n-1,      \\
  \hat{X}^{n,4}_{(k+1)h_1}             & = \varphi(\hat{X}^{n,4}_{kh_1}, h_1, W_{(k+1)h_1} - W_{kh_1},V_k),                                     & \quad \kappa_2+1 \leq k\leq n-1,
\end{align*}
and
\begin{align*}
  \hat{X}^{n,5}_{kh_1}                 & = \hat{X}^{n,3}_{kh_1},                                                                            & \quad   0\leq k \leq \kappa_2,   \\
  \hat{X}^{n,5}_{\kappa_2 h_1+(k'+1)h_2} & = \varphi(\hat{X}^{n,5}_{\kappa_2 h_1+k'h_2}, h_2, W_{\kappa_2 h_1+(k'+1)h_2} - W_{\kappa_2 h_1+k'h_2},V_{4n+k'}), & \quad 0 \leq k' \leq n-1,      \\
  \hat{X}^{n,5}_{(k+1)h_1}             & = \varphi(\hat{X}^{n,5}_{kh_1}, h_1, W_{(k+1)h_1} - W_{kh_1},V_k),                                     & \quad \kappa_2+1 \leq k\leq n-1,
\end{align*}
These schemes correspond respectively to the time grids $\Pi^0 \cup  \{ \kappa_1 T/n + k'T/n^2 , 1 \leq k' \leq n-1 \}$, $\Pi^0 \cup  \{ \kappa_2 T/n + k'T/n^2 , 1 \leq k' \leq n-1 \}$ and $\Pi^0 \cup  \{ \kappa_1 T/n + k'T/n^2 , 1 \leq k' \leq n-1 \}\cup  \{ \kappa_2 T/n + k'T/n^2 , 1 \leq k' \leq n-1 \}$. We then get \begin{align}
  \hat{P}^{3,n}f  =& \E[f(\hat{X}^{n,0}_T)] + n\E[f(\hat{X}^{n,1}_T) - f(\hat{X}^{n,0}_T)] +n^2 \E[f(\hat{X}^{n,2}_T) - f(\hat{X}^{n,1}_T)] \label{P3n} \\
  &+\frac{n(n-1)}{2}\E[f(\hat{X}^{n,5}_T) - f(\hat{X}^{n,4}_T)- f(\hat{X}^{n,3}_T)+ f(\hat{X}^{n,0}_T)]. \notag 
\end{align}

\subsection{Simulations result for the CIR process}\label{Sim_CIR}
In this subsection, we want to illustrate the convergence of the approximations $\hat{P}^{2,n}$ and $\hat{P}^{3,n}$, which together with the use of the second order scheme \eqref{NV_scheme} guarantee respectively approximations of order four and six by Theorem~\ref{thm_main}. In order to calculate these approximations, we use Monte-Carlo estimators of~\eqref{P2n_indep} and~\eqref{P3n}, using independent samples for each expectation. The number of samples (up to $10^{11}$)  is such that we can neglect the statistical error. In Figures \ref{CIR_Plot1}, \ref{CIR_Plot2} and \ref{CIR_Plot3} we plot the convergence in function of the time step for different parameters choices, taking advantage of the closed formula for the Laplace transform of the CIR process, see e.g.~\cite[Proposition 1.2.4]{AA_book}. The three numerical experiments test different levels of the ratio $\sigma^2 / 4a$ in decreasing order. We observe that the slopes estimated on the log-log plots are close to 2, 4 and 6 respectively, so that they are in accordance with Theorem~\ref{thm_main}. Note however that Theorem~\ref{thm_main} gives an asymptotic result for $n\to \infty$, while we are restricted here to rather small values of $n$ since we are using a large number of samples to kill the statistical error.  
In all the cases shown, the approximations of higher order outperform the one built with the simple second order scheme~\eqref{NV_scheme}. Talking about accuracies, the fourth order approximation for $n=3$ shows an absolute relative error of about $0.17\%$ in the tests in Figures \ref{CIR_Plot1}, and \ref{CIR_Plot2} and $0.02\%$ in the one in Figure \ref{CIR_Plot3}; the sixth order approximation  already for $n=3$ exhibits a relative error of $0.002\%$ in each case studied.

\begin{figure}[h]
  \centering
  \begin{subfigure}[h]{0.49\textwidth}
    \centering
    \includegraphics[width=\textwidth]{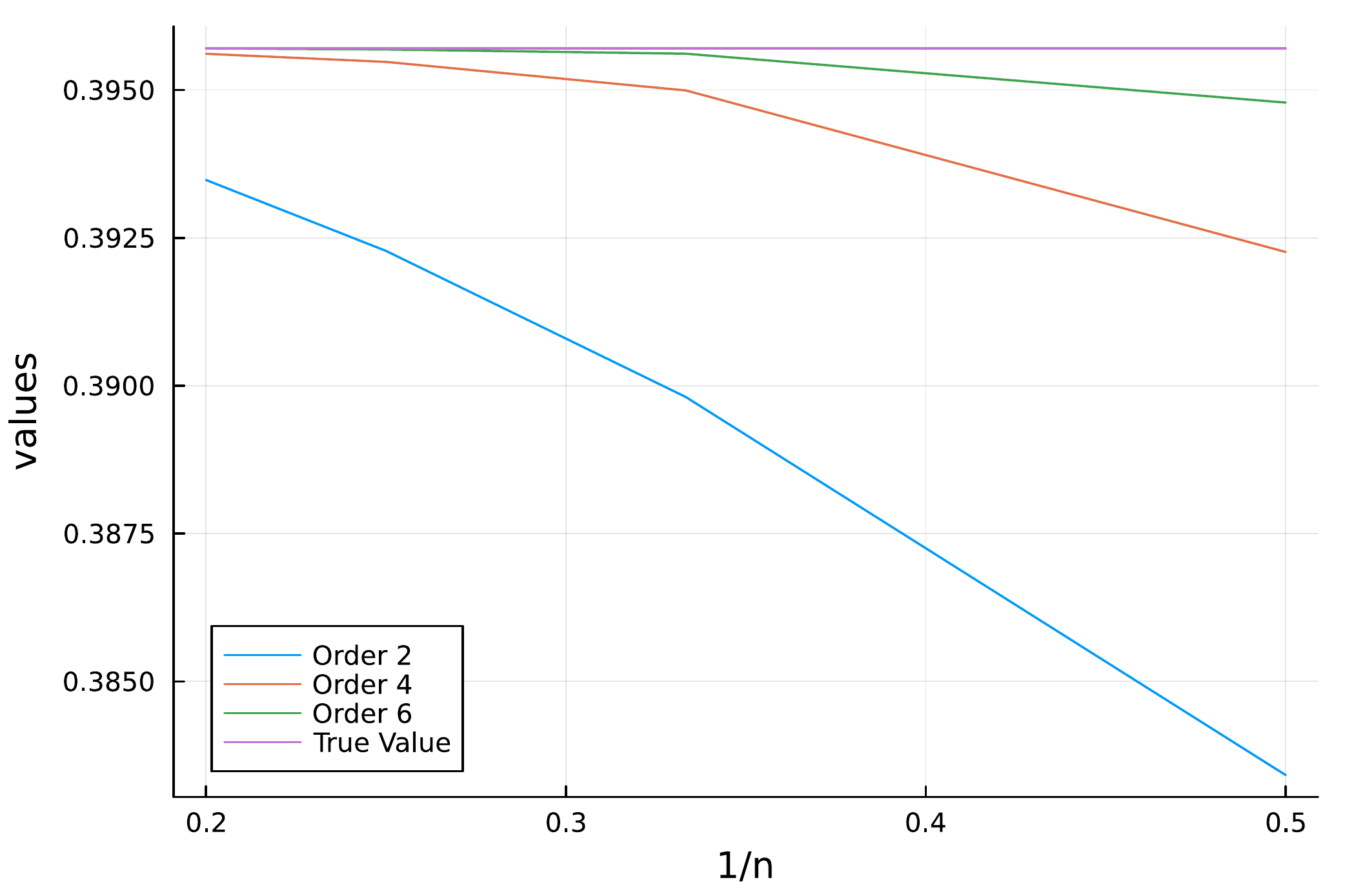}
    \caption{Values plot}
    \label{fig:values_plot13}
  \end{subfigure}
  \hfill
  \begin{subfigure}[h]{0.49\textwidth}
    \centering
    \includegraphics[width=\textwidth]{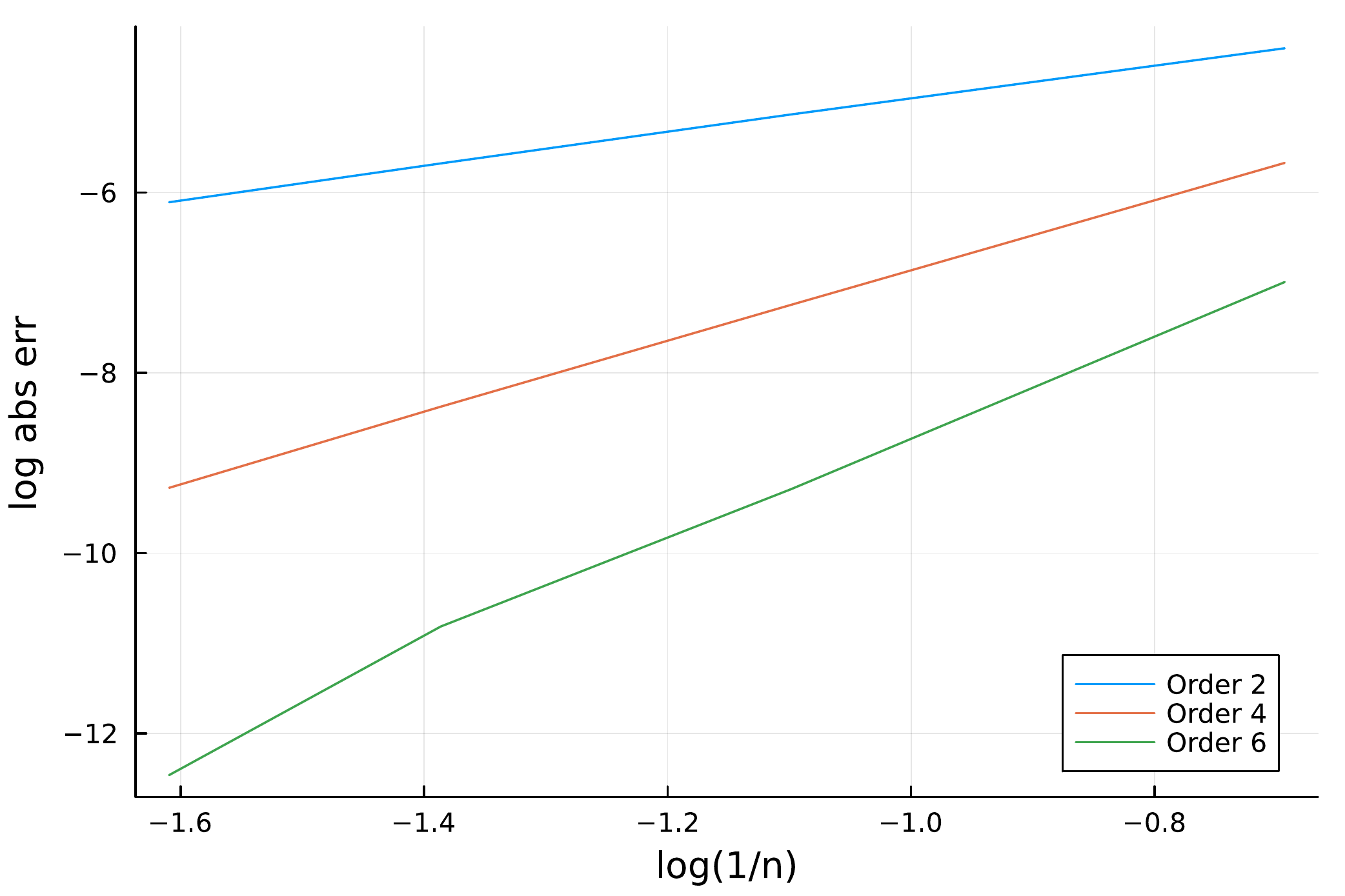}
    \caption{Log-log plot}
    \label{fig:log-log_plot13}
  \end{subfigure}
  \caption{Parameters: $x=0.0$, $a=0.2$, $k=0.5$, $\sigma=0.65$, $f(z)=\exp(-10 z)$ and $T=1$ \blue{($\frac{\sigma^2}{2a}\approx 1.06$)}. Graphic~({\sc a}) shows the values of $\hat{P}^{1,n}f$, $\hat{P}^{2,n}f$, $\hat{P}^{3,n}f$ as a function of the time step $1/n$  and the exact value. Graphic~({\sc b}) draws $\log(|\hat{P}^{i,n}f-P_Tf|)$ in function of $\log(1/n)$: the regressed slopes are 1.86, 3.93 and 5.87 for the second, fourth and sixth order respectively.}\label{CIR_Plot1}
\end{figure}

\begin{figure}[h!]
  \centering
  \begin{subfigure}[h]{0.49\textwidth}
    \centering
    \includegraphics[width=\textwidth]{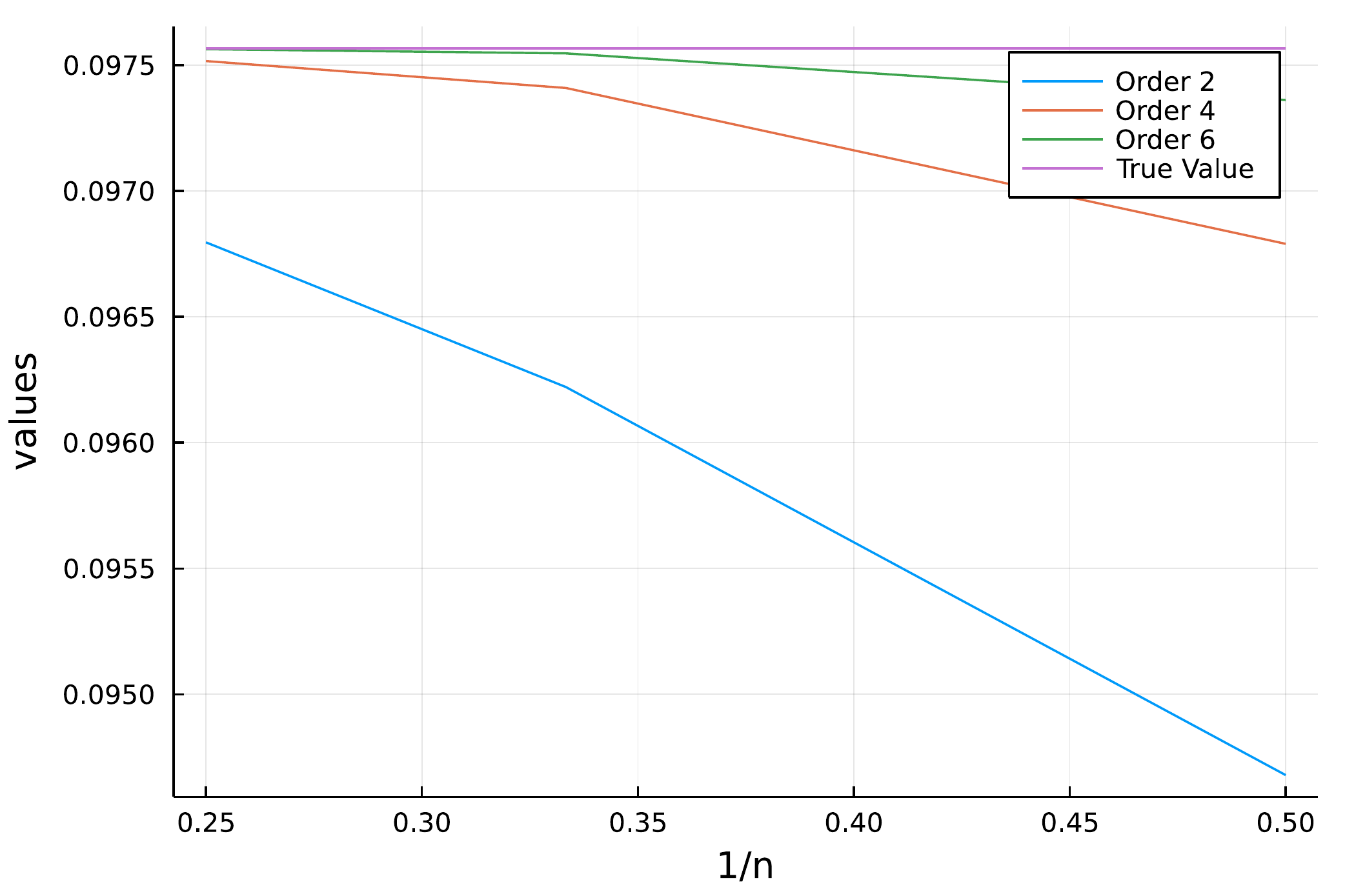}
    \caption{Values plot}
    \label{fig:values_plot9}
  \end{subfigure}
  \hfill
  \begin{subfigure}[h]{0.49\textwidth}
    \centering
    \includegraphics[width=\textwidth]{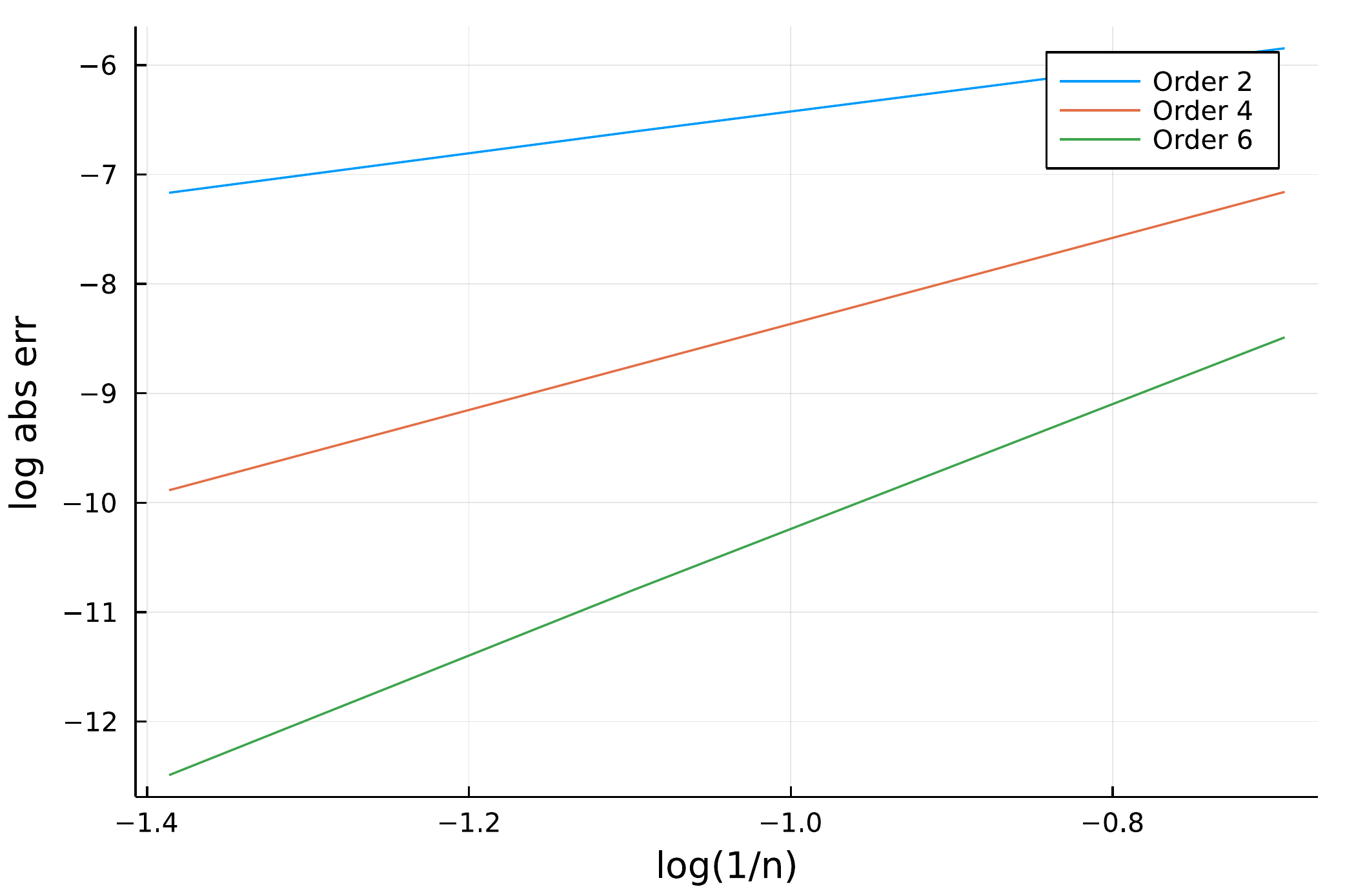}
    \caption{Log-log plot}
    \label{fig:log-log_plot9}
  \end{subfigure}
  \caption{Parameters: $x=0.3$, $a=0.4$, $k=1$, $\sigma=0.4$, $f(z)=\exp(-8 z)$ and $T=1$ \blue{($\frac{\sigma^2}{2a}= 0.2$)}. Graphic~({\sc a}) shows the values of $\hat{P}^{1,n}f$, $\hat{P}^{2,n}f$, $\hat{P}^{3,n}f$ as a function of the time step $1/n$  and the exact value. Graphic~({\sc b}) draws $\log(|\hat{P}^{i,n}f-P_Tf|)$ in function of $\log(1/n)$:  the regressed slopes are 1.90, 3.93 and 5.77 for the second, fourth and sixth order respectively.}\label{CIR_Plot2}
\end{figure}

\begin{figure}[h]
  \centering
  \begin{subfigure}[h]{0.49\textwidth}
    \centering
    \includegraphics[width=\textwidth]{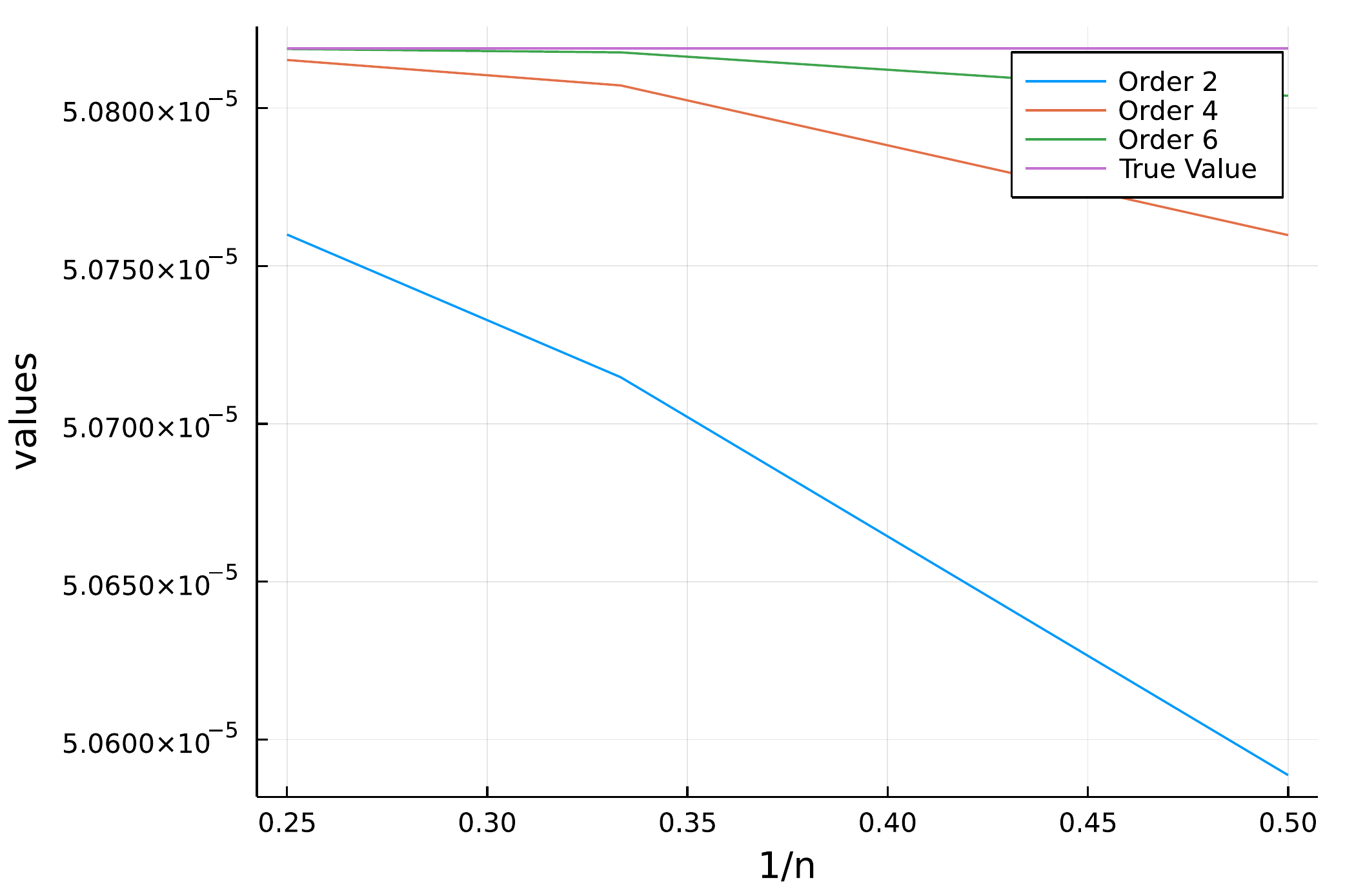}
    \caption{Values plot}
    \label{fig:values_plot15}
  \end{subfigure}
  \hfill
  \begin{subfigure}[h]{0.49\textwidth}
    \centering
    \includegraphics[width=\textwidth]{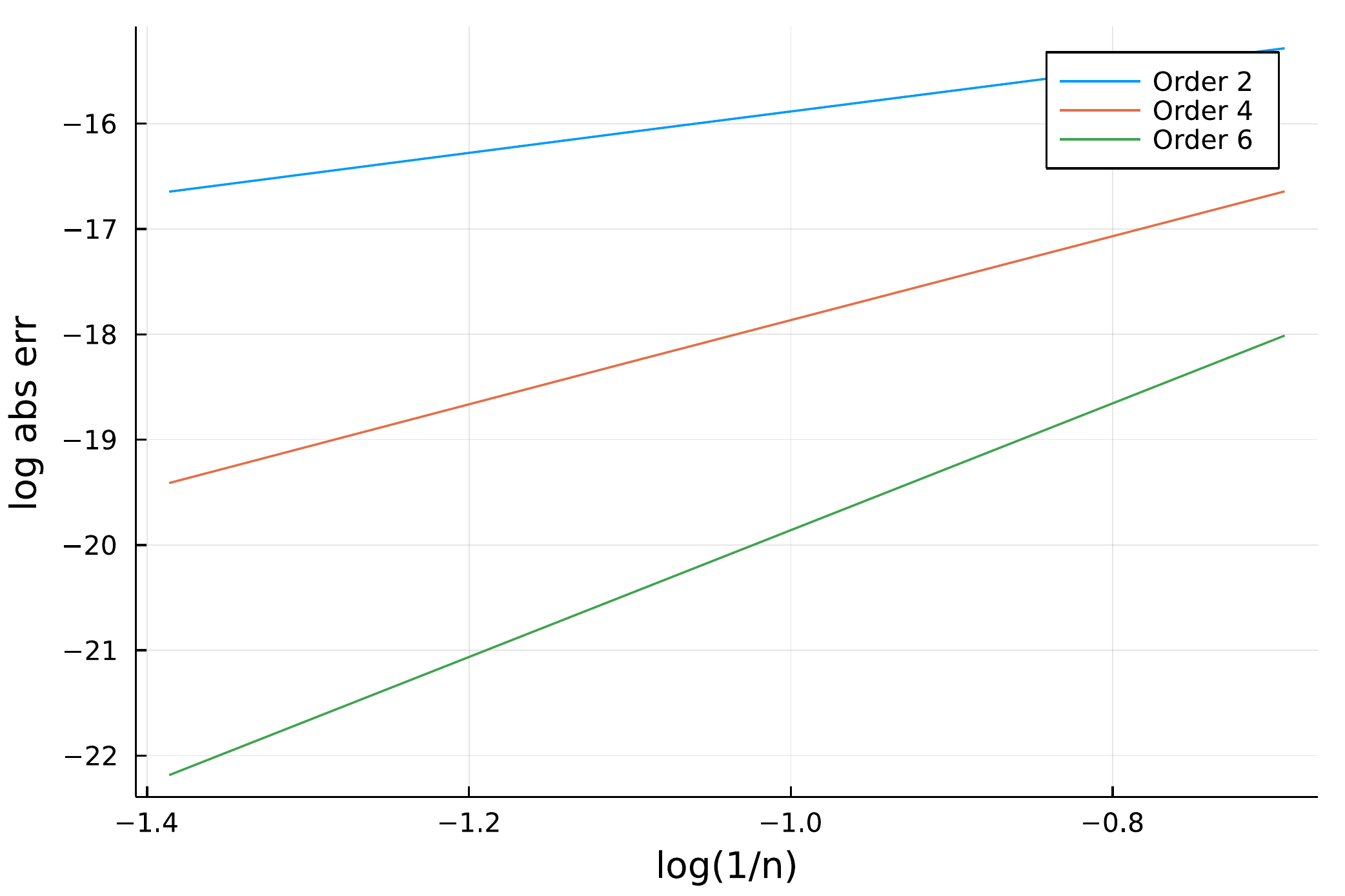}
    \caption{Log-log plot}
    \label{fig:log-log_plot15}
  \end{subfigure}
  \caption{Parameters: $x=10$, $a=10$, $k=1$, $\sigma=0.23$, $f(z)=\exp(- z)$ and $T=1$ \blue{($\frac{\sigma^2}{2a}\approx 0.0026$)}.
  Graphic~({\sc a}) shows the values of $\hat{P}^{1,n}f$, $\hat{P}^{2,n}f$, $\hat{P}^{3,n}f$ as a function of the time step $1/n$  and the exact value. Graphic~({\sc b}) draws $\log(|\hat{P}^{i,n}f-P_Tf|)$ in function of $\log(1/n)$:  the regressed slopes are 1.96, 4.00 and 6.02 for the second, fourth and sixth order respectively.}\label{CIR_Plot3}
\end{figure}

\subsection{Simulations result for the Heston model}\label{Sim_Heston}

In this subsection, we want to test the second order scheme for the Heston model proposed by Alfonsi in~\cite{AA_MCOM} along with the approximations of order 4 and 6 obtained with combination of random grids. 
First, we recall the couple of stochastic differential equations describing this model
\begin{equation}\label{Heston SDEs}
  \begin{cases}
    dS^{(x,s)}_t = rS^{(x,s)}_t dt + \sqrt{X_t}S^{(x,s)}_t (\rho dW_t + \sqrt{1-\rho^2} dZ_t), \ S^{(x,s)}_0=s,\\
    dX^x_t = (a-kX^x_t) dt +\sigma \sqrt{X^x_t} dW_t, \ X^x_0=x,
  \end{cases}
\end{equation}
where $W$ and $Z$ are two independent Brownian motions. We define the two following random variables
\begin{align*}
  S_1\big((x,s),h,Z_h \big) & = \left(x, s\exp\Big(\sqrt{x(1-\rho^2)}Z_h\Big)                            \right)                                                                               \\
  S_2\big((x,s),h,W_h \big) & =\bigg( \varphi(x,h,W_h),\\& s\exp\left((r-\frac{\rho}{\sigma}a)h + (\frac{\rho}{\sigma}k-\frac{1}{2})\frac{x+\varphi(x,h,W_h)}{2}h + \frac{\rho}{\sigma}(\varphi(x,h,W_h)-x)\right)\bigg)
\end{align*}
where $\varphi$ is defined by~\eqref{def_varphi} anf corresponds to the second order scheme for the CIR process.
We define as in \cite{AA_MCOM} the second order scheme for~\eqref{Heston SDEs} as follows
\begin{equation}\label{H2S}
  \Phi\big((x,s),h,(W_h,Z_h),B\big) = \begin{cases} S_2\left(S_1\big((x,s),h,Z_h \big),h,W_h\right), \text{ if } B=1, \\
    S_1\left(S_2\big((x,s),h,W_h \big),h,Z_h\right), \text{ if } B=0.\end{cases} 
\end{equation}
where $B$ is an independent Bernoulli random variable of parameter 1/2.

To test the order of the approximations $\hat{P}^{2,n}$ and $\hat{P}^{3,n}$ boosting the second order scheme~\eqref{H2S}, we have calculated European put prices taking advantage of the existence of a semi closed formula for this option, see~\cite{Heston}. In Figure~\ref{Heston_orders} we draw the convergence in function of the time step. Again, we noticed that the slopes obtained on the log-log plot are in line with the expected order of convergence. 
More importantly, we see that the correction terms of the approximations $\hat{P}^{2,n}$ and $\hat{P}^{3,n}$ really improves the precision. They respectively give relative errors of a 0.035\% and 0.0023\%, already for $n=3$.

\begin{figure}[h!]
  \centering
  \begin{subfigure}[h]{0.49\textwidth}
    \centering
    \includegraphics[width=\textwidth]{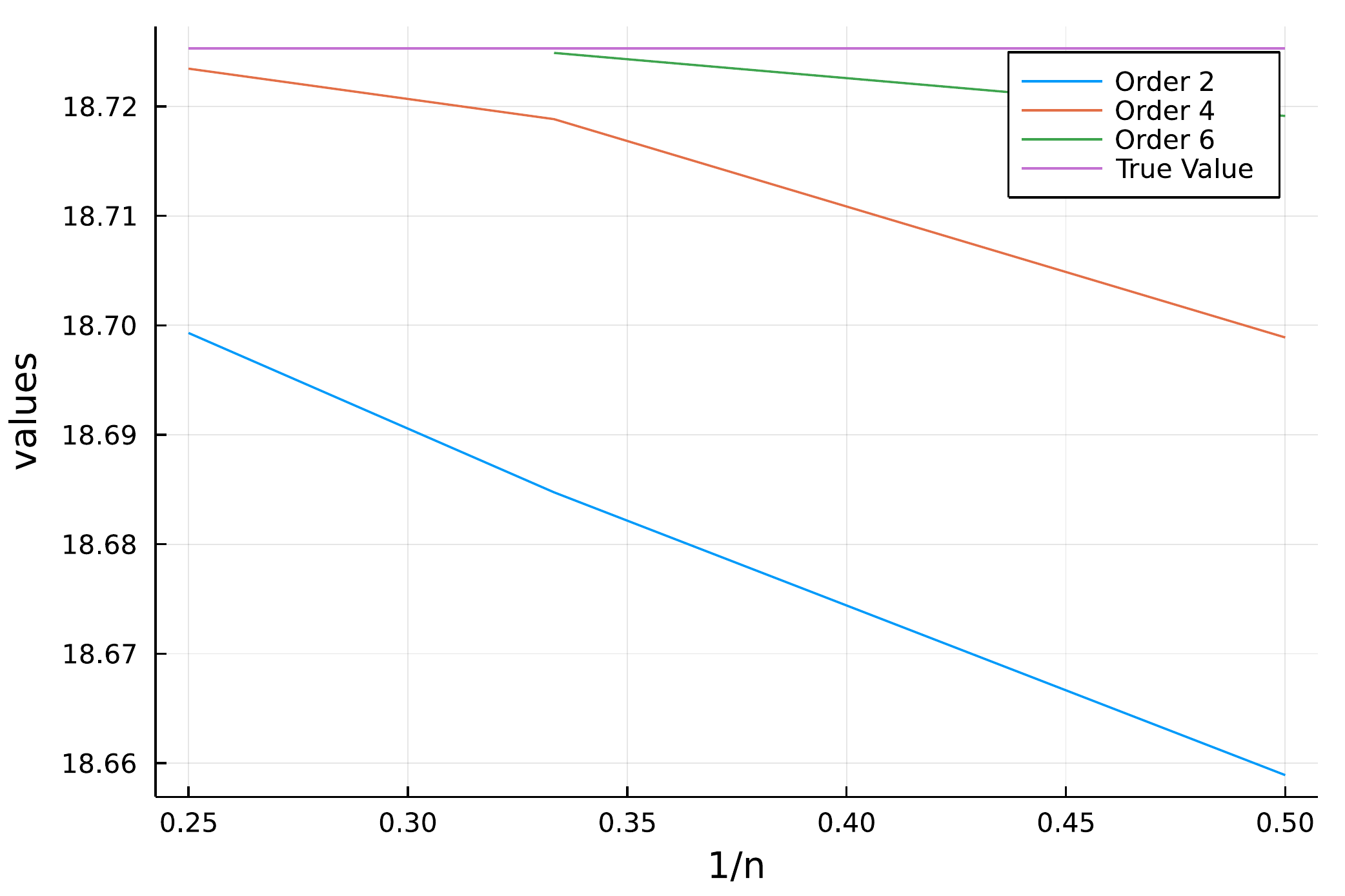}
    \caption{Values plot}
    \label{fig:values_plot_heston2}
  \end{subfigure}
  \hfill
  \begin{subfigure}[h]{0.49\textwidth}
    \centering
    \includegraphics[width=\textwidth]{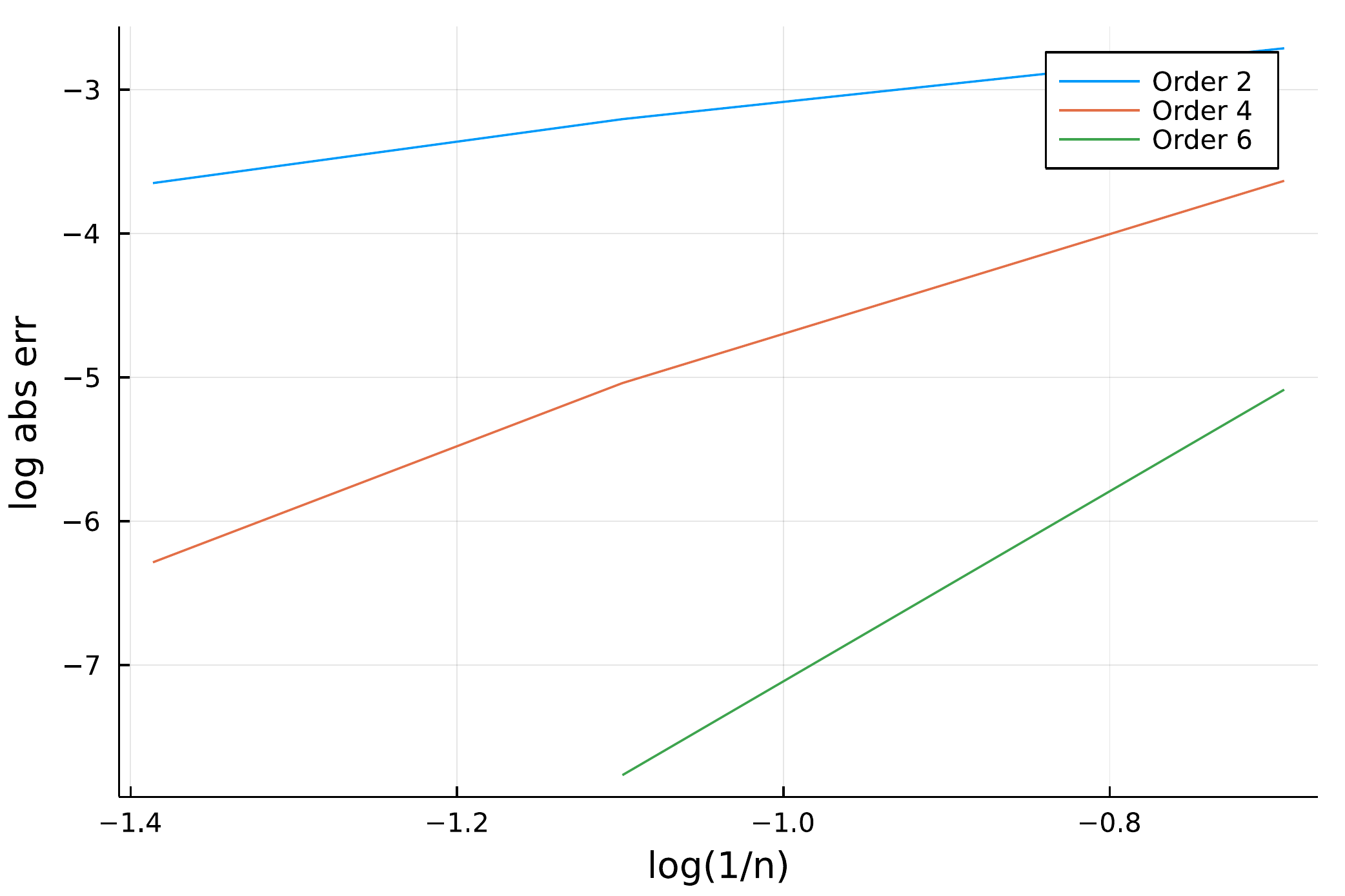}
    \caption{Log-log plot}
    \label{fig:log-log_plot_heston2}
  \end{subfigure}
  \caption{Test function: $f(x,s)=(K-s)^+$. Parameters: $S_0=100$, $r=0$, $x=0.25$, $a=0.25$, $k=1$, $\sigma=0.65$, $\rho=-0.3$, $T=1$, $K=100$ \blue{($\frac{\sigma^2}{2a}= 0.845$)}.
  Graphic~({\sc a}) shows the values of $\hat{P}^{1,n}f$, $\hat{P}^{2,n}f$, $\hat{P}^{3,n}f$ as a function of the time step $1/n$  and the exact value. Graphic~({\sc b}) draws $\log(|\hat{P}^{i,n}f-P_Tf|)$ in function of $\log(1/n)$:  the regressed slopes are 1.34, 4.00 and 6.02 for the second, fourth and sixth order respectively.}\label{Heston_orders}
\end{figure}

\subsection{Optimized implementation of~$\hat{P}^{2,n}$}\label{Optim_time/var}

The approximations $\hat{P}^{2,n}$ and $\hat{P}^{3,n}$ defined respectively by~\eqref{P2n_indep} and~\eqref{P3n} involve respectively two and four expectations. The larger is $\nu$ the more  expectations are involved in $\hat{P}^{\nu,n}$. Thus,  for simplicity, independent samples were used  by Alfonsi and Bally~\cite{AB} to compute each term. However,  it may be interesting to reuse some samples in order to spare computation time. This is what we investigate in this subsection. 

Namely, Equation \eqref{P2n_indep} leads naturally to the  two following estimators  of $P^{2,n}f$:
\begin{align}\label{Theta_i}
  \Theta_{\i}(M_1,M_2,n) &= \frac{1}{M_1} \sum_{j=1}^{M_1} f\big((\hat{X}^{n,0}_T)^{(j)} \big) +  \frac{1}{M_2} \sum_{i=M_1+1}^{M_1+M_2} n\left(f\big((\hat{X}^{n,1}_T)^{(i)}\big) -   f\big((\hat{X}^{n,0}_T)^{(i)}\big) \right), \\
  \Theta_{\d}(M_1,M_2,n) &= \frac{1}{M_1} \sum_{j=1}^{M_1} f\big((\hat{X}^{n,0}_T)^{(j)} \big) +  \frac{1}{M_2} \sum_{i=1}^{M_2} n\left(f\big((\hat{X}^{n,1}_T)^{(i)}\big) -   f\big((\hat{X}^{n,0}_T)^{(i)}\big) \right).\label{Theta_d}
\end{align}
The first one takes  independent samples  and we call this estimator $\Theta_{\i}$. This approach is the one used in~\cite{AB}. In the second case, we reuse the first $M_1\wedge M_2$ simulations of $f\big((\hat{X}^{n,0}_T)^{(i)}\big)$ in both sums. We call this estimator $\Theta_{\d}$ to indicate the dependence between samples. In terms of variance, we have
\begin{align}
   & \Var\left(\Theta_{\i}(M_1,M_2,n)\right) = \frac{\Var\big(f(\hat{X}^{n,0}_T)\big)}{M_1} + \frac{\Var\big(n(f(\hat{X}^{n,1}_T) - f(\hat{X}^{n,0}_T))\big)}{M_2}, \\
   & \begin{multlined}
    \Var\left(\Theta_{\d}(M_1,M_2,n)\right) = \frac{\Var\big(f(\hat{X}^{n,0}_T)\big) }{M_1}+ 2 \frac{\Cov\big(f(\hat{X}^{n,0}_T),n(f(\hat{X}^{n,1}_T) - f(\hat{X}^{n,0}_T))\big) }{M_1\vee M_2} \\
    + \frac{\Var\big(n(f(\hat{X}^{n,1}_T) - f(\hat{X}^{n,0}_T))\big)}{M_2}.
  \end{multlined}\label{var_thetad}
\end{align}
Let us define $t_1$ as the time to generate one sample $f(\hat{X}^{n,0}_T)$ and $t_2$ as the one needed for one sample of the correction $n(f(\hat{X}^{n,1}_T) - f(\hat{X}^{n,0}_T))$. The computation time needed to compute $\Theta_{\i}$ is given by $g_{\i}(M_1,M_2)=M_1t_1+M_2t_2$, while the one needed to compute $\Theta_{\d}$ is $g_{\d}(M_1,M_2)=\mathbf{1}_{M_1\ge M_2} [(M_1-M_2)t_1+M_2t_2] +\mathbf{1}_{M_1< M_2}M_2t_2$. We note $\zeta=\frac{t_2}{t_1}$. From the definition of schemes $\hat{X}^{n,0}$ and $\hat{X}^{n,1}$ in~\eqref{Scheme0}, we observe that  $2\le \zeta \le 3 $ and that $\zeta \approx 2.5$ in average since these schemes are equal up to $\kappa h_1$.  
The advantage of $\Theta_{\d}$ is not necessarily in reducing the variance, but in decreasing the number of simulations needed, making it more efficient from a computational time point of view.\\

We want to find the optimal numbers of simulations $M_1$ and $M_2$ for our estimators in order to minimize the execution time for a given variance $\varepsilon^2$. Let us define 
$\sigma^2_2(n) = \Var\big(f(\hat{X}^{n,0}_T)\big)$, $\sigma^2_{4}(n) = \Var\big(n(f(\hat{X}^{n,1}_T) - f(\hat{X}^{n,0}_T))\big)$, $\Gamma(n)=\Cov\big(f(\hat{X}^{n,0}_T), n(f(\hat{X}^{n,1}_T) - f(\hat{X}^{n,0}_T))\big)$. For~$\Theta_{\i}$, the minimization of $g_{\i}$ given that $\sigma^2_2(n)/M_1+\sigma^4_2(n)/M_2=\varepsilon^2$ leads to $M_1=\sqrt{\zeta}\frac{\sigma_2(n)}{\sigma_4(n)}M_2$ and then to:
\begin{equation}\label{optim_i}
M_{1,\i}=\left\lceil \frac{1}{\varepsilon^2}\left( \sigma^2_2(n) +\sqrt{\zeta} \sigma_2(n)\sigma_4(n)   \right)  \right\rceil, \ M_{2,\i}= \left\lceil \frac{1}{\varepsilon^2}\left( \sigma^2_4(n) +\frac{\sigma_2(n)\sigma_4(n)}{\sqrt{\zeta}}   \right)  \right\rceil.
\end{equation}
To minimize the execution time $g_{\d}$, one has first to decide whether we take $M_1\ge M_2$ or $M_1<M_2$. From~\eqref{var_thetad}, this amounts to compare $\frac{\sigma^2_2(n)+2\Gamma(n)}{m+\tilde{m}\zeta}$ with $\frac{\sigma^2_4(n)+2\Gamma(n)}{m+\tilde{m}}$ where $m=M_1\wedge M_2$ and $\tilde{m}\ge 0$ ($\tilde{m}$ simulations of the correction term takes the same time as $\zeta \tilde{m}$ simulations of~$f(\hat{X}^{n,0}_T)$). Taking the derivative at~$\tilde{m}=0$, we get that $M_1\ge M_2$ if $\zeta\frac{\sigma^2_2(n)+2\Gamma(n)}{\sigma^2_4(n)+2\Gamma(n)}\ge 1$, and $M_1<M_2$ otherwise. When $M_1\ge M_2$, the minimisation of $g_{\d}$ given $ \Var\left(\Theta_{\d}(M_1,M_2,n)\right) =\varepsilon^2$ leads to
\begin{align}
 \begin{cases} M_{1,\d}  &=\left\lceil \frac{1}{\varepsilon^2}\left( \sigma^2_2(n) + 2\Gamma(n) +\sqrt{\big(\sigma^2_2(n)+2\Gamma(n)\big)\sigma^2_4(n)(\zeta-1) }  \right)  \right\rceil, \\ M_{2,\d} & =\left\lceil \frac{1}{\varepsilon^2}\left( \sigma^2_4(n) +\sqrt{\frac{\big(\sigma^2_2(n)+2\Gamma(n)\big)\sigma^2_4(n)}{\zeta-1} }  \right)  \right\rceil.\end{cases} \label{optim_d}
\end{align}
We have similar formulas when $M_1<M_2$. In all our numerical experiments below, we are in the case where  $\zeta\frac{\sigma^2_2(n)+2\Gamma(n)}{\sigma^2_4(n)+2\Gamma(n)}\ge 1$ and thus taking $M_1\ge M_2$ is optimal.

Now, we show the performance of the two estimators~\eqref{Theta_i} and \eqref{Theta_d}. To do this, we calculate the empirical variances $\sigma_2^2(n)$, $\sigma_4^2(n)$ and the  empirical covariance $\Gamma(n)$ on a small sampling, fix a desired precision $\varepsilon=1.96\sqrt{\Var(\Theta(M_1,M_2,n))}$ for both the estimators, so that all the terms have roughly the same statistical error with a 95\% confidence interval half-width equal to $\varepsilon$. We show two tables in which we set the precision $\varepsilon$ to $10^{-3}$. In Table~\ref{Table_T/V_I_VS_D_1}, we have   $\sigma_2^2(n) \gg \sigma_4^2(n) $, while in Table~\ref{Table_T/V_I_VS_D_2}, $\sigma_2^2(n)$ is still larger than $\sigma_4^2(n)$, but of the same order of magnitude.

\begin{table}[h!]
  \begin{tabular}{ c|c|c|c|c|  }
    \cline{2-5}
        & $n=2$     & $n=3$     & $n=4$      & $n=5$      \\
    \hline
    \multicolumn{1}{|c|}{$\Theta_i$}  & 63.04 & 96.15 & 131.84 & 165.80 \\
    \multicolumn{1}{|c|}{$\Theta_d$}  & 51.61 & 87.24 & 122.76 & 152.32 \\
    \hline
  \end{tabular}
  \caption{ Computation time (in seconds) needed by the Estimators $\Theta_i$ and $\Theta_d$ for a precision $\varepsilon=10^{-3}$.
  Test function: $f(x,s)=(K-s)^+$. Parameters: $S_0=100$, $r=0$, $x=0.4$, $a=0.4$, $k=1$, $\sigma=0.2$, $\rho=-0.3$, $T=1$, $K=100$ \blue{($\frac{\sigma^2}{2a}=0.05$)}.}\label{Table_T/V_I_VS_D_1}
\end{table}

\begin{table}[h!]
  \begin{tabular}{ c|c|c|c|c|  }
    \cline{2-5}
        & $n=2$     & $n=3$     & $n=4$      & $n=5$      \\
    \hline
    \multicolumn{1}{|c|}{$\Theta_i$}  & 59.50 & 102.13 & 148.45 & 193.41 \\
    \multicolumn{1}{|c|}{$\Theta_d$}  & 37.59 & 70.43  & 100.14 & 136.16 \\	
    \hline
  \end{tabular}
  \caption{Computation time (in seconds) needed by the Estimators $\Theta_i$ and $\Theta_d$ for a precision $\varepsilon=10^{-3}$.  Test function: $f(x,s)=(K-s)^+$. Parameters: $S_0=100$, $r=0$, $x=0.1$, $a=0.1$, $k=1$, $\sigma=0.63$, $\rho=-0.3$, $T=1$, $K=100$ \blue{($\frac{\sigma^2}{2a}\approx 1.98$)}.}\label{Table_T/V_I_VS_D_2}
\end{table}

We observe that we do not have a great gain in using $\Theta_{\i}$ when $\sigma_2^2(n)   \gg \sigma_4^2(n) $ (Table \ref{Table_T/V_I_VS_D_1}), while we save up to $30\%$ of execution time, using $\Theta_{\d}$ instead of $\Theta_{\i}$, when $\sigma_2^2 (n) $ is of the same order of magnitude $\sigma_4^2 (n)$ (Table \ref{Table_T/V_I_VS_D_2}).  Heuristically, this can be understood as follows: when $\sigma^2_2(n)$ is of the same magnitude as~$\sigma^2_4(n)$, so are $M_{1,\i}$ and $M_{2,\i}$, which gives an important gain in reusing the simulation of the correction term.  
In any case, $\Theta_{\d}$ turns out to be faster for each choice of parameters, and therefore we recommend it at the expense of $ \Theta_{\i}$.

\subsection{Comparison between the second and the fourth order approximation}
Subsections~\ref{Sim_CIR} and~\ref{Sim_Heston} have confirmed  numerically the theoretical results obtained in this paper. However, they do not compare directly the computation time required by the different methods. We now present numerical tests that allow us to prove  the real advantage of using the fourth order approximation $\hat{P}^{2,n}$ instead of the simple second order scheme. Namely, we compare the squared $L^2$ distance of the estimator $\Theta_d$ from the true value with the same distance between the estimator of $\hat{P}^{1, n^2}$ with the true value. We plot these quantities in function of the computation time needed. Note that $\hat{P}^{2, n}$ and $\hat{P}^{1, n^2}$ converges at a  rate of $O(n^{-4})$ so that their bias have the same order of magnitude.

\begin{figure}[h!]
  \centering
  \begin{subfigure}[h]{0.49\textwidth}
    \centering
    \includegraphics[width=\textwidth]{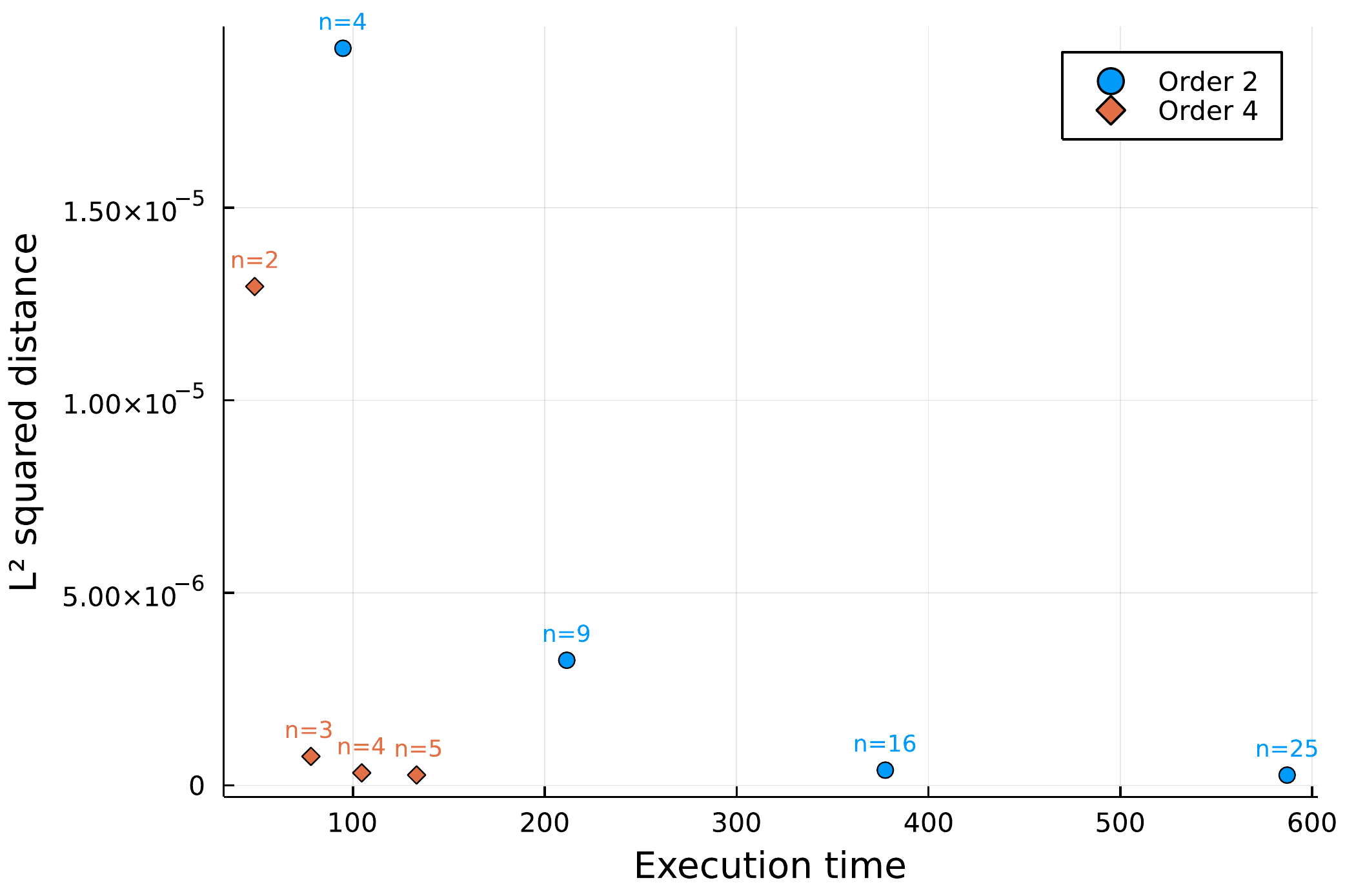}
    \caption{\blue{($\frac{\sigma^2}{2a}=0.0125$)}} 
    \label{fig: L2distance_H4S_2M_Dep_VS_H2S6}
  \end{subfigure}
  \hfill
  \begin{subfigure}[h]{0.49\textwidth}
    \centering
    \includegraphics[width=\textwidth]{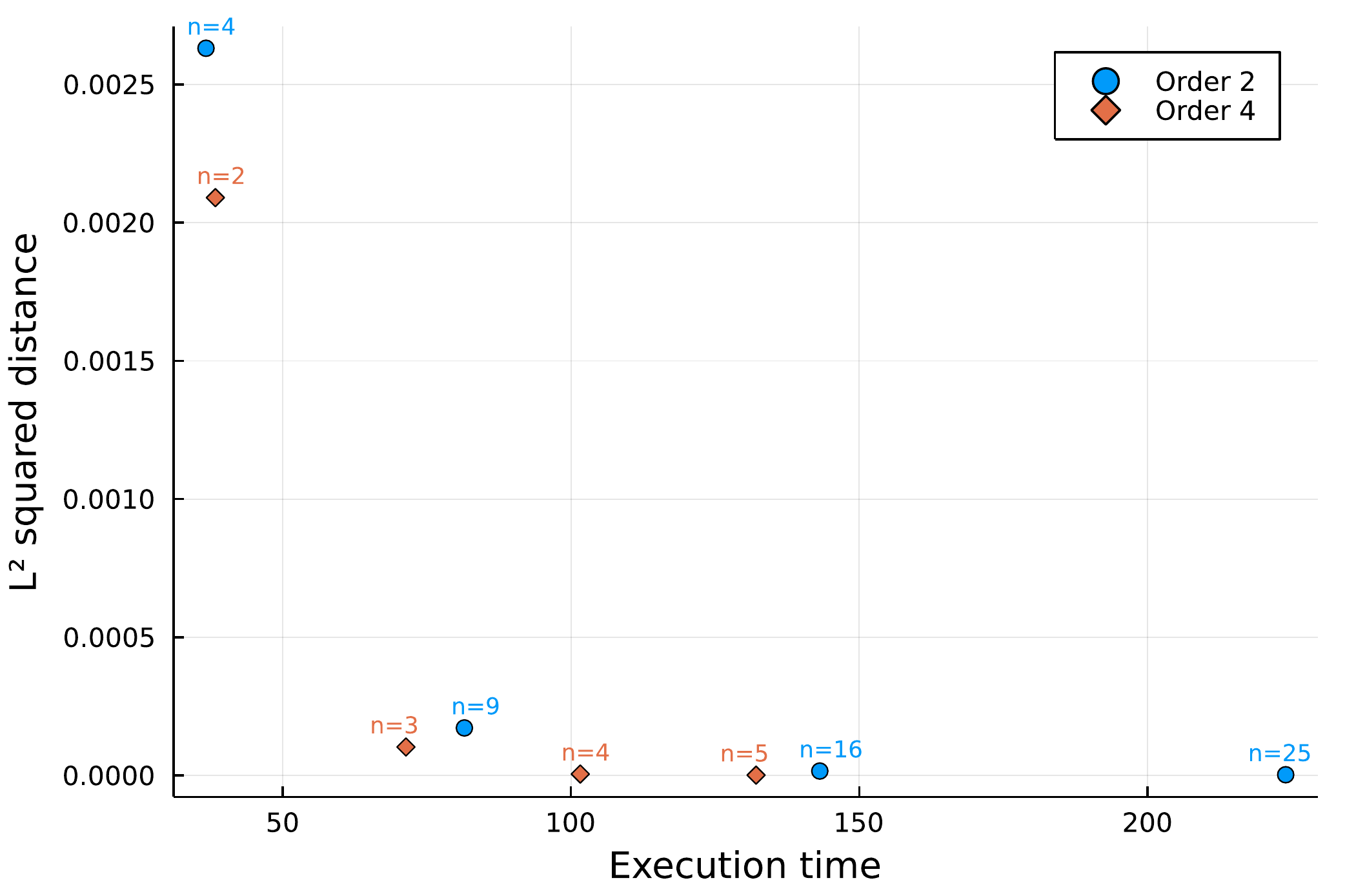}
    \caption{\blue{($\frac{\sigma^2}{2a}\approx 1.98$)}} 
    \label{fig: L2distance_H4S_2M_Dep_VS_H2S13}
  \end{subfigure}
  \caption{
    $L^2$-square error in function of the execution time in seconds. Test function: $f(x,s)=(K-s)^+$. Parameters in graphic~({\sc a}) : $S_0=100$, $r=0$, $x=0.4$, $a=0.4$, $k=1$, $\sigma=0.1$, $\rho=-0.3$, $T=1$, $K=100$.\\Parameters in graphic~({\sc b}) : 
   $S_0=100$, $r=0$, $x=0.1$, $a=0.1$, $k=1$, $\sigma=0.63$, $\rho=-0.3$, $T=1$, $K=100$.}\label{H4S_dep_VS_H2S} 
\end{figure}

Figure~\ref{H4S_dep_VS_H2S} shows the results for the calculation of the price of a European put option in the Heston model with two different sets of parameters. In this numerical experience we set a precision $\varepsilon$ equal to $10^{-3}$. The empirical evidences show that the fourth order estimator $\Theta_d$ is the best choice, especially when the ratio $\frac{\sigma^2}{2a}\ll 1$ (Figure \ref{H4S_dep_VS_H2S} ({\sc a})) where the performance of the fourth order estimator is unparalleled. For example, $\hat{P}^{2, 3}$ is twice more accurate and more than twice faster than $\hat{P}^{1,9}$.
Even in Figure \ref{H4S_dep_VS_H2S} ({\sc b}), where  the ratio $\frac{\sigma^2}{2a}$ is larger and close to~2,  the fourth order estimator~$\Theta_d$ is more precise than the second order estimator and is faster from $n=3$ onward. These experiments illustrate the outperformance of the boosted estimator $\hat{P}^{2, n}$ with respect to $\hat{P}^{1, n}$.


\blue{\subsection{Numerical experiments for $\sigma^2>4a$}
In the previous subsections, we have presented analyses to confirm numerically the theoretical rates of convergence of our approximations, and to assess their computational time. This is why we have only considered parameters such that $\sigma^2\le 4a$, since this condition is required in Theorem~\ref{thm_main}. However, it is possible to test numerically the relevance of the boosting technique using random grids when $\sigma^2> 4a$. This is the purpose of this subsection. We first present the different schemes and then analyse numerically the variance of the correcting term. Then, we present the numerical bias of the approximation $\hat{P}^{2, n}$ for the CIR and Heston models.}

\blue{\subsubsection{The approximation schemes}
In order to perform the numerical tests for $\sigma^2>4a$, we consider two different second order schemes for the CIR process. The first one is the second order scheme~\eqref{Alfonsi_scheme} presented in \cite{AA_MCOM}. More precisely, we define
\begin{equation}\label{Alfonsi_scheme_detailed}
  \varphi_A(x,t,\sqrt{t}N) = \varphi_A^u(x,t,\sqrt{t}N)\mathds{1}_{x \ge K^A_2(t)}  + \varphi_A^d(x,t,\sqrt{t}N)\mathds{1}_{x<K^A_2(t)},
\end{equation}
with 
\begin{align*}
  \varphi_A^u(x,t,\sqrt{t}N) &=\varphi(x,t,-\sqrt{3t})\mathds{1}_{N<\cN^{-1}(1/6)}  + \varphi(x,t,0)\mathds{1}_{\cN^{-1}(1/6)\le N<\Phi_N^{-1}(5/6)} \nonumber \\
  &+\varphi(x,t,\sqrt{3t})\mathds{1}_{N\ge\cN^{-1}(5/6)}, \\
  \varphi_A^d(x,t,\sqrt{t}N) &=\frac{\E[X^x_t]}{2(1-\pi(t,x))}\mathds{1}_{N<\cN^{-1}(1-\pi(t,x))}  + \frac{\E[X^x_t]}{2\pi(t,x)}\mathds{1}_{ N\ge\cN^{-1}(1-\pi(t,x))} ,
\end{align*}
where $\cN$ is the  cumulative distribution function of the standard normal distribution, $\pi(t,x)=\frac{1-\sqrt{1-\frac{\E[X^x_t]^2}{\E[(X^x_t)^2]}}}{2}$ and $K^A_2(t)$ is the function given by~\eqref{threshold_gen} with $A_Y=\sqrt{3}$. Here, we have written the scheme $\varphi_A$ as a function of the starting point $x$, the time step $t$ and the Brownian increment $\sqrt{t} N$. When computing $n\E[\left(f(\hat{X}^{n,1}_T) - f(\hat{X}^{n,0}_T)\right)]$ by Monte-Carlo, we use the same Brownian path to sample $\hat{X}^{n,0}_T$ and $\hat{X}^{n,1}_T$, as explained at the beginning of Section~\ref{Simulations}. Thus, there is a strong dependence between these schemes.
}

\blue{We present also another scheme that corresponds to other choices of $Y$ and $\hat{X}^{x,d}$ in~\eqref{Alfonsi_scheme}. We use a distribution that is pretty similar to a Gaussian distribution over the threshold, and a scaled beta distribution below. Thus, we define
\begin{equation}\label{def_sch_B}
  \varphi_B(x,t,\sqrt{t}N) = \varphi_B^u(x,t,\sqrt{t}N)\mathds{1}_{x\ge K^B_2(t)}  + \varphi_B^d(x,t,\sqrt{t}N)\mathds{1}_{x<K^B_2(t)},
\end{equation}
with 
\begin{align*}
  \varphi_B^u(x,t,\sqrt{t}N) & =\varphi(x,t,-z_2)\mathds{1}_{N\le-c_2} + \varphi(x,t,-z_1)\mathds{1}_{-c_2<N\le-c_1}  + \varphi(x,t,N)\mathds{1}_{-c_1\le N<c_1}  \nonumber \\
  &+\varphi(x,t,z_1)\mathds{1}_{c_1<N\le c_2} + \varphi(x,t,z_2)\mathds{1}_{N>c_2}, \\
  \varphi_B^d(x,t,\sqrt{t}N) & = \frac{\E[X^x_t]}{2\pi(t,x)} (\cN(N))^{\frac{1}{2\pi(t,x)}-1},
\end{align*}
where $z_1=2.7523451704710586$, $z_2 = 3.5$, $c_1=2.58$, $c_2= 3.106520327375868$, and  $K^B_2(t)$ is the function given by~\eqref{threshold_gen} with $A_Y=3.5$. Here, we have fixed the values of $c_1$ and $z_2$, and we have numerically calculated $c_2$ and $z_1$ to have $\E[Y^2]=\E[N^2]$ and $\E[Y^4]=\E[N^4]$ with 
$$Y= -z_2 \mathds{1}_{N\le-c_2} -z_1 \mathds{1}_{-c_2<N\le -c_1} +N \mathds{1}_{-c_1<N\le c_1}+z_1 \mathds{1}_{c_1<N\le c_2}+ z_2\mathds{1}_{c_2<N}. $$
The random variable $\varphi_B^d(x,t,\sqrt{t}N)$ has the same two first moments as $X^x_t$, and we can prove following the same arguments as~\cite[Theorem 2.8]{AA_MCOM} that $\varphi_B(x,t,\sqrt{t}N)$ is a second order scheme for the weak error.}

\blue{\subsubsection{Numerical study of the variance of the correcting term $n\left(f(\hat{X}^{n,1}_T) - f(\hat{X}^{n,0}_T)\right)$}
We now analyse the variance of the corrections terms of the correcting term $n\left(f(\hat{X}^{n,1}_T) - f(\hat{X}^{n,0}_T)\right)$ in function of the number $n$ of discretization steps, when we use the different schemes~\eqref{Alfonsi_scheme_detailed} and~\eqref{def_sch_B}. We start with an example with $\sigma^2<4a$ for which $\varphi$ is still defined and $\varphi_A$ (resp. $\varphi_B$) does not use the auxiliary scheme $\varphi_A^d$ (resp. $\varphi_B^d$) since $K_2^{A}(t)=K_2^{B}(t)=0$ in this case. We observe in Table \ref{Table_VAR_CIR1} that the scheme~$\varphi_A$ leads to a value of $\Var(n(f(\hat{X}^{n,1}_T) - f(\hat{X}^{n,0}_T)))$ that is more than 20 times as large as that the one obtained using $\varphi$. Besides, the variance given by the scheme  $\varphi_A$ increases quite linearly with $n$, while the one obtained with $\varphi$ seems to be bounded and to decrease with $n$. One heuristic explanation for this is that  $\varphi_A$ is discrete scheme, which increases the strong error between the scheme on the fine grid~$\Pi^1$ and the scheme on the coarse grid~$\Pi^0$. Considering the scheme $\varphi_B$ that mixes Gaussian and discrete distributions leads to a much smaller variance that is rather close to the one of the scheme $\varphi$. However, as $n$ gets large, we see that the variance does not decrease in contrast to the scheme $\varphi$.
\begin{table}[h!]
  \begin{tabular}{ c c|c|c|c|c|c|  }
    \cline{3-7}
        & & $n=2$     & $n=4$      & $n=8$       & $n=16$    & $n=32$    \\
    \hline
    \multicolumn{1}{|c|}{\multirow{2}{*}{$\varphi$} } & $\sigma^2_{4}(n)$    & 23.86e-4 & 17.43e-4 & 9.35e-4 & 4.85e-4  & 2.49e-4 \\
    \multicolumn{1}{|c|}{}& 95\% prec.& 3.2e-6 & 3.7e-6 & 2.8e-6 & 2.1e-6 & 1.5e-6 \\ 
    \hline
    \multicolumn{1}{|c|}{\multirow{2}{*}{$\varphi_A$}} & $\sigma^2_{4}(n)$     & 4.807e-2 & 10.870e-2 & 22.493e-2 & 45.437e-2 & 91.219e-2 \\ 
    \multicolumn{1}{|c|}{}& 95\% prec.     & 2.4-5 & 5.2e-5 & 11.1e-5 & 22.9e-5 & 46.3e-5 \\
    \hline
    \multicolumn{1}{|c|}{\multirow{2}{*}{$\varphi_B$} } & $\sigma^2_{4}(n)$    & 24.17e-4 & 18.37e-4 & 11.78e-4 & 10.27e-4  & 13.85e-4 \\
    \multicolumn{1}{|c|}{}& 95\% prec.& 3.2e-6 & 3.7e-6 & 2.9e-6 & 3.0e-6 & 4.5e-6 \\ 
    \hline
  \end{tabular}
  \caption{$\sigma^2_{4}(n) = \Var\big(n(f(\hat{X}^{n,1}_T) - f(\hat{X}^{n,0}_T))\big)$ for the different schemes, with $10^8$ samples and 95\% confidence interval precision.
  Test function: $f(x)=\exp(-10x)$. Parameters: $x=0.2$, $a=0.2$, $k=0.5$, $\sigma=0.5$, $T=1$ ($\frac{\sigma^2}{2a}=0.625$).}\label{Table_VAR_CIR1}
\end{table}
}

\blue{We now consider a case with $\sigma^2>4a$ so that the schemes $\varphi_A$ and $\varphi_B$ switch around their threshold. The scheme $\varphi$ is no longer defined. 
In Table~\ref{Table_VAR_CIR2}, we observe a huge increase of the variance in time steps with respect to Table~\ref{Table_VAR_CIR1}. We now observe that the variances grow almost linearly with respect to~$n$. Again, this can be explained heuristically by the switching that increases the strong error between the schemes on the fine grid~$\Pi^1$ and the coarse grid~$\Pi^0$. The rather high values of the variance obtained with the scheme $\varphi_A$ makes the boosting technique using random grids less interesting in practice from a computational point of view. In contrast, the scheme $\varphi_B$ produces much lower variances and the Monte-Carlo estimator of $\hat{P}^{2,n}f$ is more competitive. 
\begin{table}[h!]
  \begin{tabular}{ c c|c|c|c|c|c|  }
    \cline{3-7}
      & & $n=2$     & $n=4$      & $n=8$       & $n=16$    & $n=32$    \\
      \hline
      \multicolumn{1}{|c|}{\multirow{2}{*}{$\varphi_A$} } 
      & $\sigma^2_{4}(n)$   & 0.0927 & 0.8742 & 2.7966 & 7.9095  & 21.6793 \\ 
      \multicolumn{1}{|c|}{}& 95\% prec.& 5.3e-5 & 3.3e-4 & 1.6e-3 & 6.1e-3 & 2.1e-2 \\ 
      \hline
      \multicolumn{1}{|c|}{\multirow{2}{*}{$\varphi_B$} } 
      & $\sigma^2_{4}(n)$   & 0.0757 & 0.2184 & 0.5145 & 1.1892  & 2.6600 \\ 
      \multicolumn{1}{|c|}{}& 95\% prec.& 6.4e-5 & 1.8e-4 & 5.5e-4 & 1.9e-3 & 6.2e-3 \\ 
      \hline
  \end{tabular}
  \caption{$\sigma^2_{4}(n) = \Var\big(n(f(\hat{X}^{n,1}_T) - f(\hat{X}^{n,0}_T))\big)$ with $10^8$ samples and 95\% confidence interval precision.
  Test function: $f(x)=\exp(-10x)$. Parameters: $x=0.2$, $a=0.2$, $k=0.5$, $\sigma=1.5$, $T=1$ ($\frac{\sigma^2}{2a}=5.625$).}\label{Table_VAR_CIR2}
\end{table}
}

\blue{\subsubsection{Numerical Convergence for the CIR}
\begin{figure}[h!]
  \centering
  \begin{subfigure}[h]{0.49\textwidth}
    \centering
    \includegraphics[width=\textwidth]{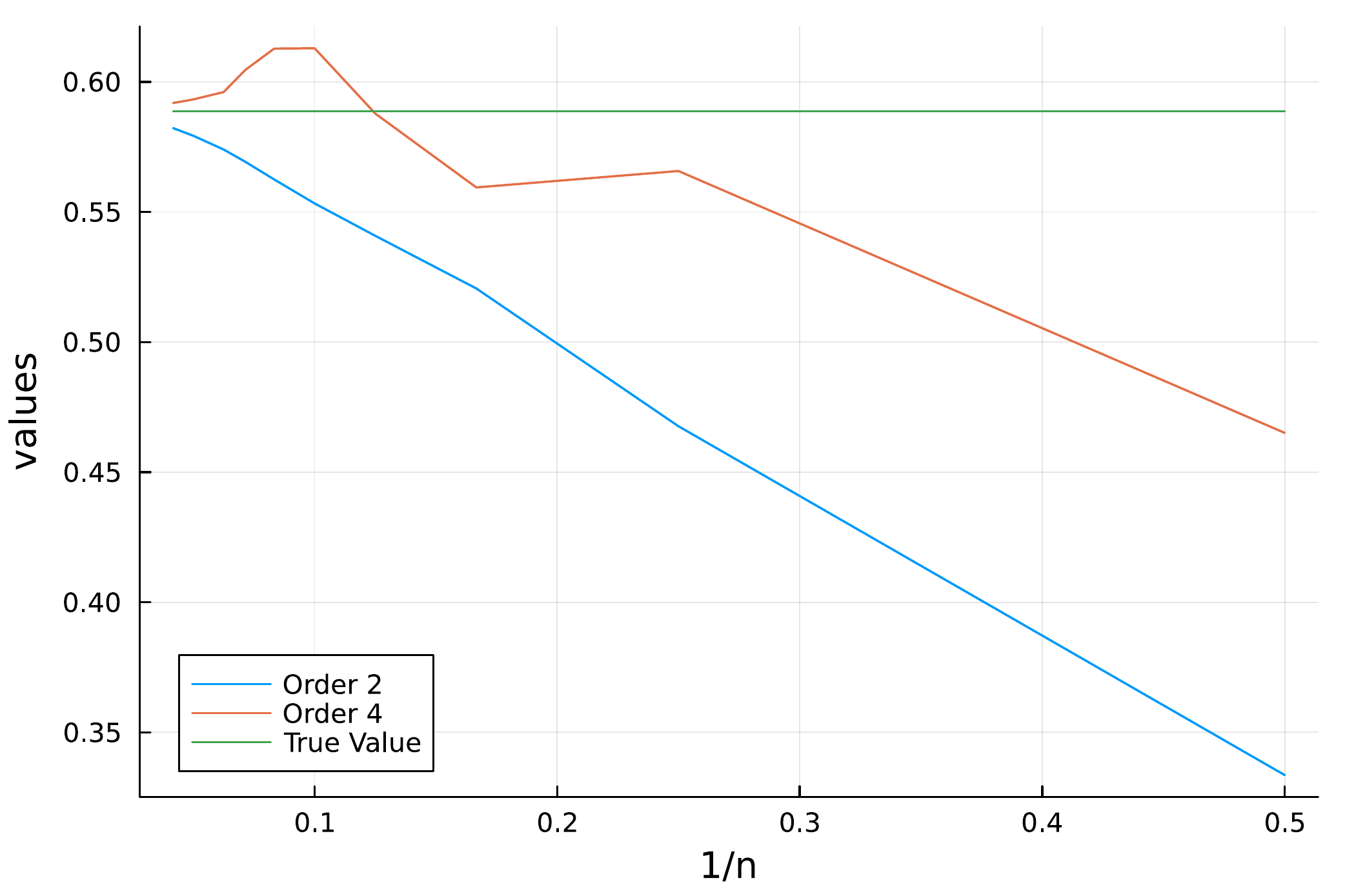}
    \caption{Values plot, scheme $\varphi_A$}
    \label{fig:values_plot_cirAA1}
  \end{subfigure}
  \hfill
  \begin{subfigure}[h]{0.49\textwidth}
    \centering
    \includegraphics[width=\textwidth]{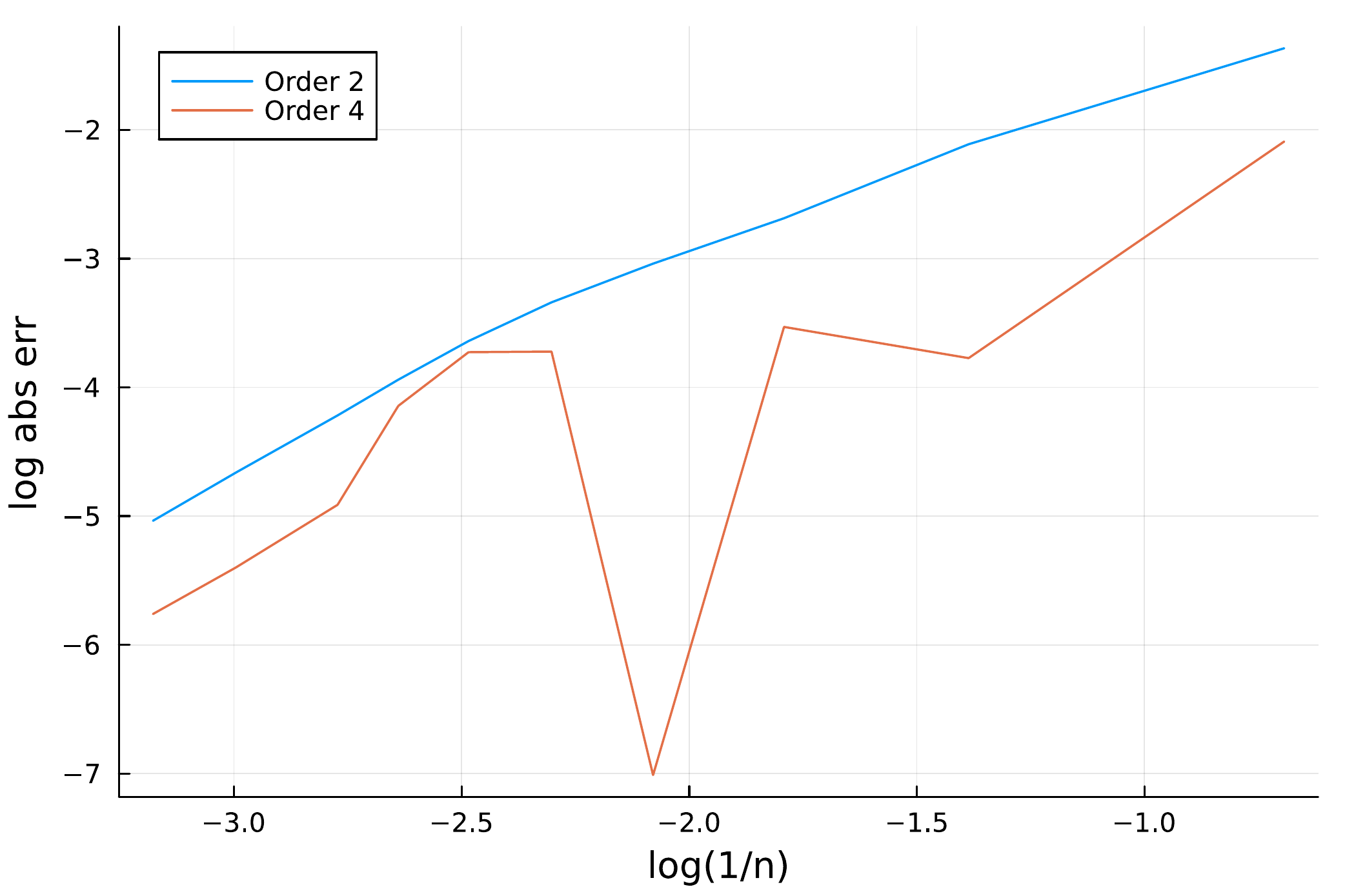}
    \caption{Log-log plot, scheme $\varphi_A$}
    \label{fig:log-log_plot_cirAA1}
  \end{subfigure}
  \begin{subfigure}[h]{0.49\textwidth}
    \centering
    \includegraphics[width=\textwidth]{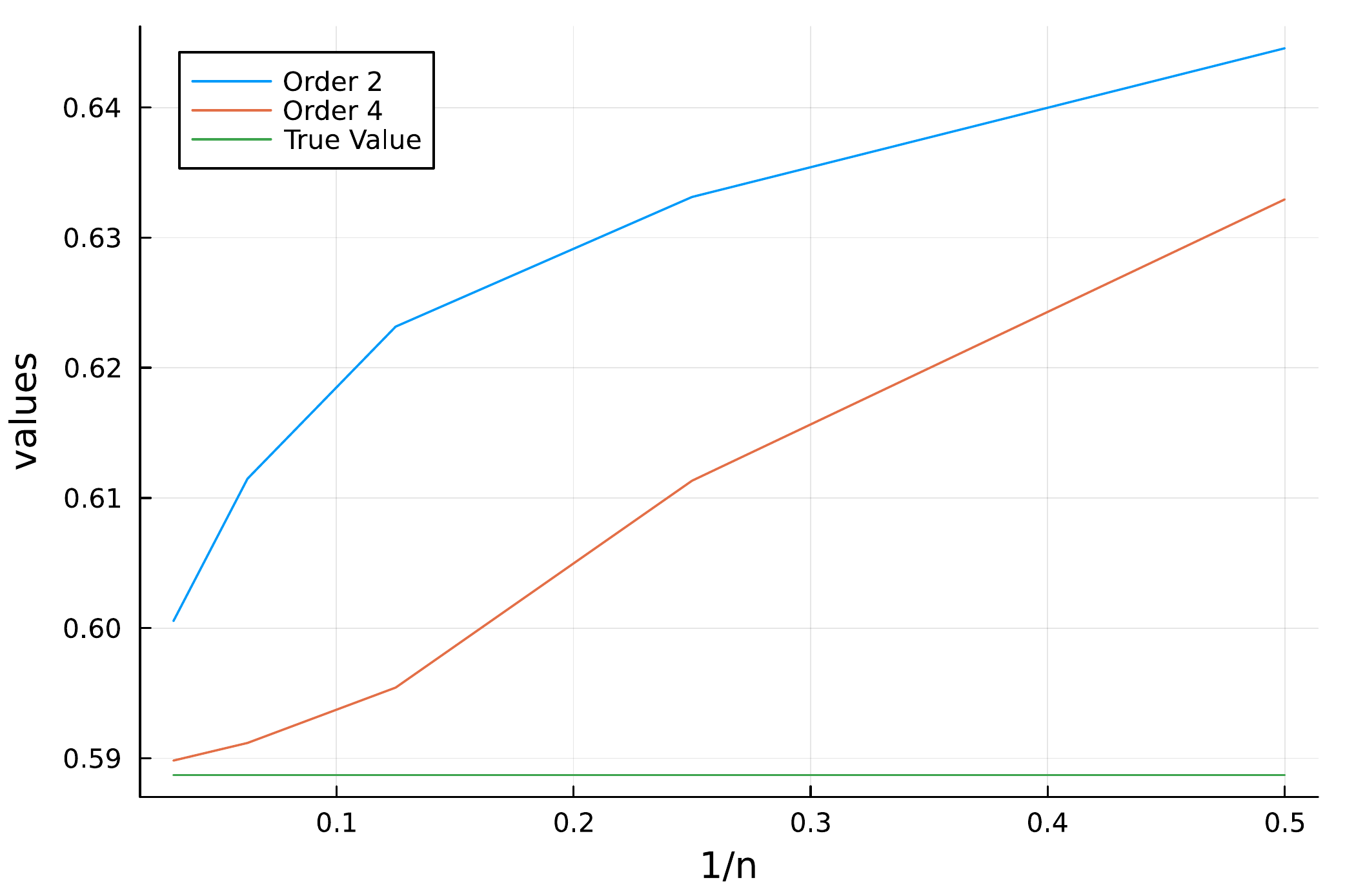}
    \caption{Values plot, scheme $\varphi_B$}
    \label{fig:values_plot_cirAE1}
  \end{subfigure}
  \hfill
  \begin{subfigure}[h]{0.49\textwidth}
    \centering
    \includegraphics[width=\textwidth]{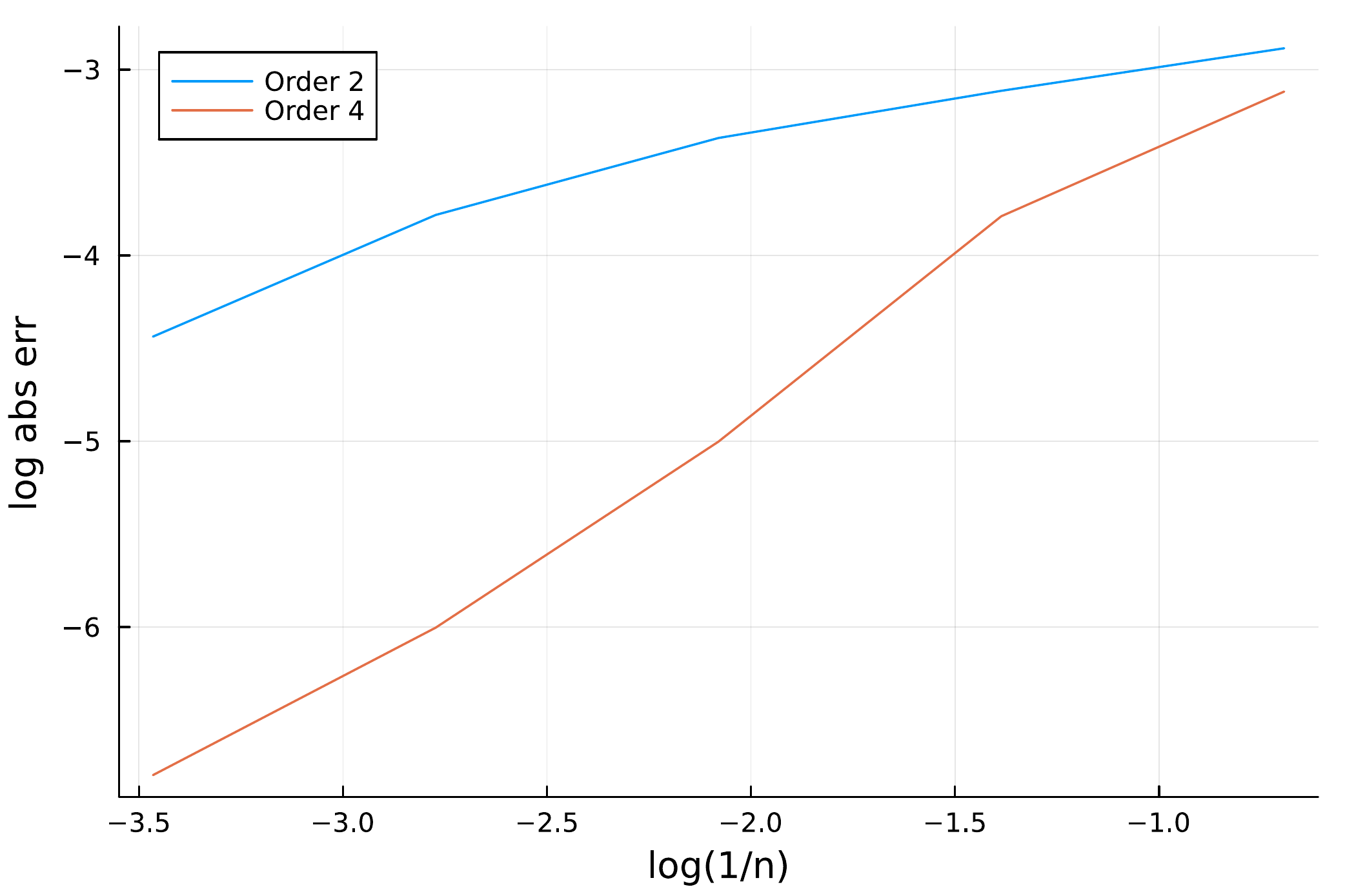}
    \caption{Log-log plot, scheme $\varphi_B$}
    \label{fig:log-log_plot_cirAE1}
  \end{subfigure}
  \caption{Test function: $f(x)=\exp(-10x)$. Parameters: $x=0.2$, $a=0.2$, $k=0.5$, $\sigma=1.5$, $T=1$ ($\frac{\sigma^2}{2a}=5.625$). Statistical precision $\varepsilon=5$e-5.
  Left graphics show the values of $\hat{P}^{1,n}f$, $\hat{P}^{2,n}f$ as a function of the time step $1/n$  and the exact value. Right graphics draw $\log(|\hat{P}^{i,n}f-P_Tf|)$ in function of $\log(1/n)$:  for the scheme $\varphi_A$ (resp. $\varphi_B$) the regressed slopes are 1.47 (resp. 0.54) and 1.14 (resp. 1.38) for the second and fourth order respectively.}\label{CIRAA_orders}
\end{figure}
}

\blue{We have plotted in Figure \ref{CIRAA_orders} the convergence of the estimators of the Monte-Carlo estimators $\hat{P}^{1,n}f$ and $\hat{P}^{2,n}f$ for the schemes $\varphi_A$ and $\varphi_B$.  We note that in all our experiments, $\hat{P}^{2,n}f$ gives a better approximation than $\hat{P}^{1,n}f$, though there is no theoretical guarantee of that. However, the improvement is not as good as for $\sigma^2\le 4a$. 
We know that $\hat{P}^{1,n}f$ leads to an asymptotic weak error of order~2: the estimated rate of convergence obtained by regression are below since we consider rather small values of~$n$ and are not in the asymptotic regime. We have instead no theoretical guarantee that $\hat{P}^{2,n}f$ gives an asymptotic weak error of order~4. The estimated rates are quite far from this value, indicating that a fourth order of convergence may not hold. To sum up,   even if $\hat{P}^{2,n}f$ is still more accurate than $\hat{P}^{1,n}f$ for $\sigma^2>4a$, it does not lead to obvious computational gains. 
}

\blue{\subsubsection{Simulations in the Heston model}
We present now some numerical tests for Heston model and consider three different schemes that are well defined for any $\sigma\ge 0$:
\begin{itemize}
\item $\Phi_A$ is the scheme~\eqref{H2S} where $\varphi_A(x,h,W_h)$ is used instead of 
$\varphi(x,h,W_h)$,
\item $\Phi_B$ is the scheme~\eqref{H2S} where $\varphi_B(x,h,W_h)$ is used instead of $\varphi(x,h,W_h)$,
\item $\Phi_E$ is the scheme~\eqref{H2S} where the exact scheme $X^x_h$ (see, e.g.~\cite[Proposition 3.1.1]{AA_book}) is used instead of $\varphi(x,h,W_h)$.
\end{itemize}
We start by comparing the variance of the correcting terms with the different schemes.
 In Table~\ref{Table_VAR_Heston1}, we consider a case with $\sigma^2<4a$ and also include the variance for the scheme~$\Phi$ given by~\eqref{H2S}. We remark that the variances of the correction term for the standard scheme $\Phi$ and for the scheme $\Phi_E$ appear to be bounded. In contrast, the variance for the schemes~$\Phi_A$ and $\Phi_B$ tends to increase with~$n$: the variance is very high for $\Phi_A$ while the one produced by $\Phi_B$ remains close to the one of $\Phi$ and $\Phi_E$.  Table~\ref{Table_VAR_Heston2} deals with a case with $\sigma^2>4a$ for which variances are much higher. We observe an approximately linear growth of the variance of the correction term for the schemes $\Phi_A$ and $\Phi_B$. The variance produced by the scheme $\Phi_E$ also increases, but in a much moderate way.  
\begin{table}[h!]
  \begin{tabular}{ c c|c|c|c|c|c|  }
    \cline{3-7}
        & & $n=2$     & $n=4$      & $n=8$       & $n=16$    & $n=32$    \\
        \hline
        \multicolumn{1}{|c|}{\multirow{2}{*}{$\Phi$} } & $\sigma^2_{4}(n)$   & 33.252 & 41.962 & 46.159 & 48.273  & 49.385 \\ 
        \multicolumn{1}{|c|}{}& 95\% prec.& 0.024 & 0.029 & 0.033 & 0.035 & 0.037 \\ 
    \hline
    \multicolumn{1}{|c|}{\multirow{2}{*}{$\Phi_A$}} & $\sigma^2_{4}(n)$     & 450.95 & 973.82 & 1976.53 & 3984.64 & 8014.19 \\ 
    \multicolumn{1}{|c|}{}& 95\% prec.     & 0.20 & 0.40 & 0.83 & 1.70 & 3.47 \\
    \hline
        \multicolumn{1}{|c|}{\multirow{2}{*}{$\Phi_B$} } & $\sigma^2_{4}(n)$   & 33.702 & 43.116 & 48.606 & 53.373  & 59.760 \\ 
        \multicolumn{1}{|c|}{}& 95\% prec.& 0.025 & 0.031 & 0.037 & 0.044 & 0.059 \\ \hline
        \multicolumn{1}{|c|}{\multirow{2}{*}{$\Phi_E$}} & $\sigma^2_{4}(n)$     & 51.99 & 53.93 & 52.46 & 51.47 &  50.99 \\     
        \multicolumn{1}{|c|}{}& 95\% prec.     & 0.032 & 0.034 & 0.036 & 0.037 & 0.037 \\
        \hline  
      \end{tabular}
  \caption{$\sigma^2_{4}(n) = \Var\big(n(f(\hat{X}^{n,1}_T,\hat{S}^{n,1}_T) - f(\hat{X}^{n,0}_T,\hat{S}^{n,0}_T))\big)$ with $10^8$ samples and 95\% confidence interval precision.
  Test function: $f(x,s)=(K-s)^+$. Parameters: $S_0=100$, $r=0$, $x=0.2$, $a=0.2$, $k=1.0$, $\sigma=0.5$, $\rho=-0.7$, $T=1$, $K=105$ ($\frac{\sigma^2}{2a}=0.625$).}\label{Table_VAR_Heston1}
\end{table}
\begin{table}[h!]
  \begin{tabular}{ c c|c|c|c|c|c|  }
    \cline{3-7}
        & & $n=2$     & $n=4$      & $n=8$       & $n=16$    & $n=32$    \\
    \hline
    \multicolumn{1}{|c|}{\multirow{2}{*}{$\Phi_A$}} & $\sigma^2_{4}(n)$     & 799.93 & 2568.43 & 6384.48 & 14588.23 & 29798.4266 \\ 
    \multicolumn{1}{|c|}{}& 95\% prec.     & 0.58 & 1.93 & 5.88 & 16.63 & 42.38 \\
    \hline
    \multicolumn{1}{|c|}{\multirow{2}{*}{$\Phi_B$}} & $\sigma^2_{4}(n)$     & 306.87 & 581.70 & 958.06 & 1729.18 & 3185.83 \\ 
    \multicolumn{1}{|c|}{}& 95\% prec.     & 0.18 & 0.38 & 0.90 & 2.65 & 8.25 \\
    \hline
    \multicolumn{1}{|c|}{\multirow{2}{*}{$\Phi_{E}$} } & $\sigma^2_{4}(n)$    & 233.89 & 287.50 & 314.03 & 331.31 & 344.20 \\ 
    \multicolumn{1}{|c|}{}& 95\% prec.& 0.14 & 0.20 & 0.24 & 0.27 & 0.29 \\ 
\hline
  \end{tabular}
  \caption{$\sigma^2_{4}(n) = \Var\big(n(f(\hat{X}^{n,1}_T,\hat{S}^{n,1}_T) - f(\hat{X}^{n,0}_T,\hat{S}^{n,0}_T))\big)$ with $10^8$ samples and 95\% confidence interval precision.
  Test function: $f(x,s)=(K-s)^+$. Parameters: $S_0=100$, $r=0$, $x=0.2$, $a=0.2$, $k=1.0$, $\sigma=1.5$, $\rho=-0.7$, $T=1$, $K=105$ ($\frac{\sigma^2}{2a}=5.625$).}\label{Table_VAR_Heston2}
\end{table}
}

\blue{We now turn to the convergence of the Monte-Carlo estimators. We have plotted in Figure~\ref{HestonAA_orders2}, for the same set of parameters as in Table~\ref{Table_VAR_Heston2}, the behavior of $\hat{P}^{1,n}f$ and $\hat{P}^{2,n}f$ for the schemes $\Phi_B$ and $\Phi_E$. We have discarded the scheme $\Phi_A$ that produces a too large variance for the correcting term. As for the CIR diffusion, we  note that $\hat{P}^{2,n}f$ gives a better approximation than $\hat{P}^{1,n}f$ but the bias does not seem to be of order~$4$. For the scheme $\Phi_B$, the improvement is moderate, and do not really compensate the computational effort of calculating the correcting term. Instead, for the scheme $\Phi_E$, the improvement is rather significant, making the approximation $\hat{P}^{2,n}f$ interesting from a computational point of view with respect to~$\hat{P}^{1,n}f$. Also, the estimated rate of convergence is much higher and not so far from~$4$. A dedicated theoretical study of $\hat{P}^{2,n}f$ with the scheme $\Phi_E$ is left for further research. 
\begin{figure}[h!]
  \centering
  \begin{subfigure}[h]{0.49\textwidth}
    \centering
    \includegraphics[width=\textwidth]{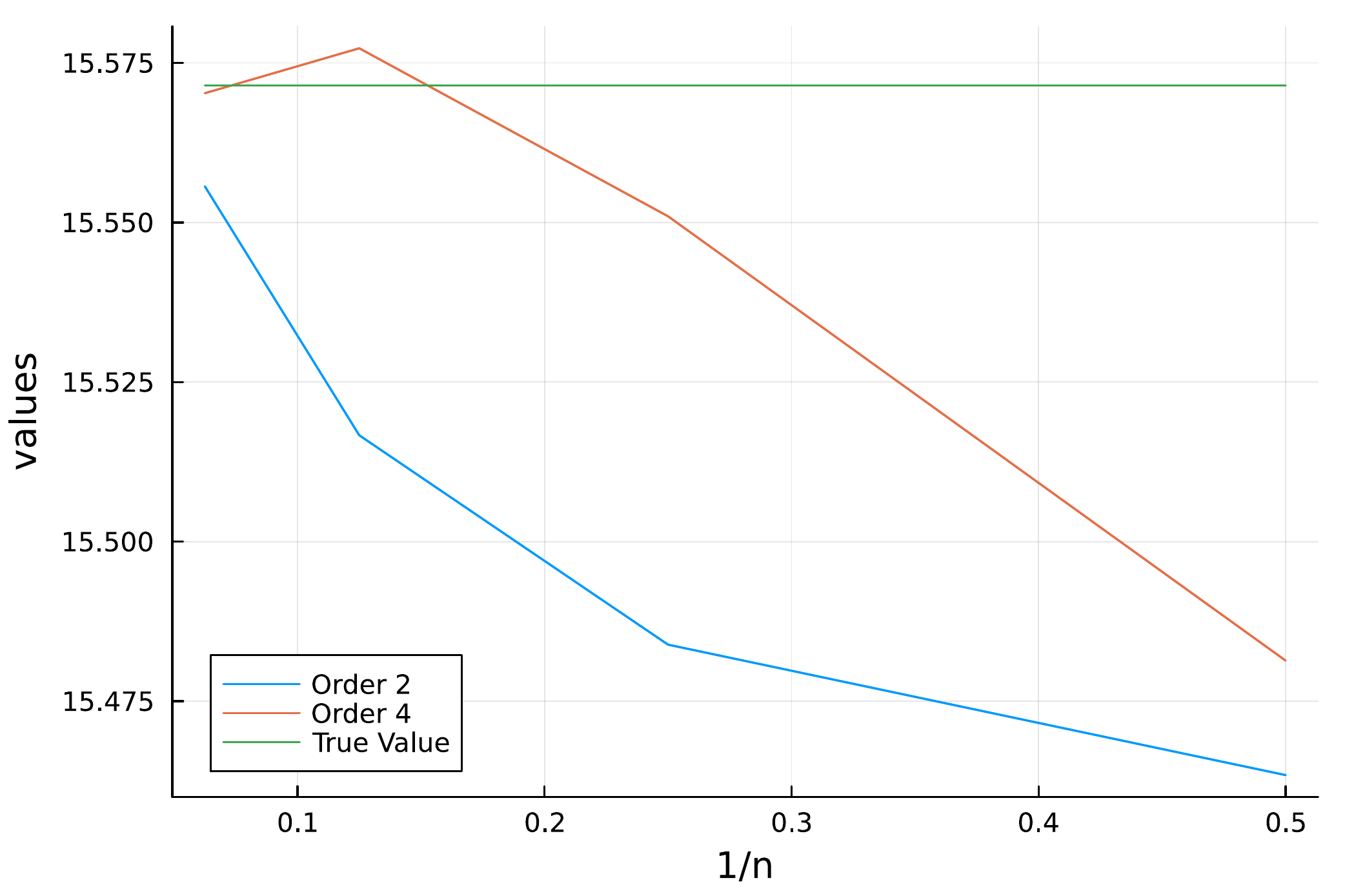}
    \caption{Values plot, scheme $\Phi_B$}
    \label{fig:values_plot_hestonAA2}
  \end{subfigure}
  \hfill
  \begin{subfigure}[h]{0.49\textwidth}
    \centering
    \includegraphics[width=\textwidth]{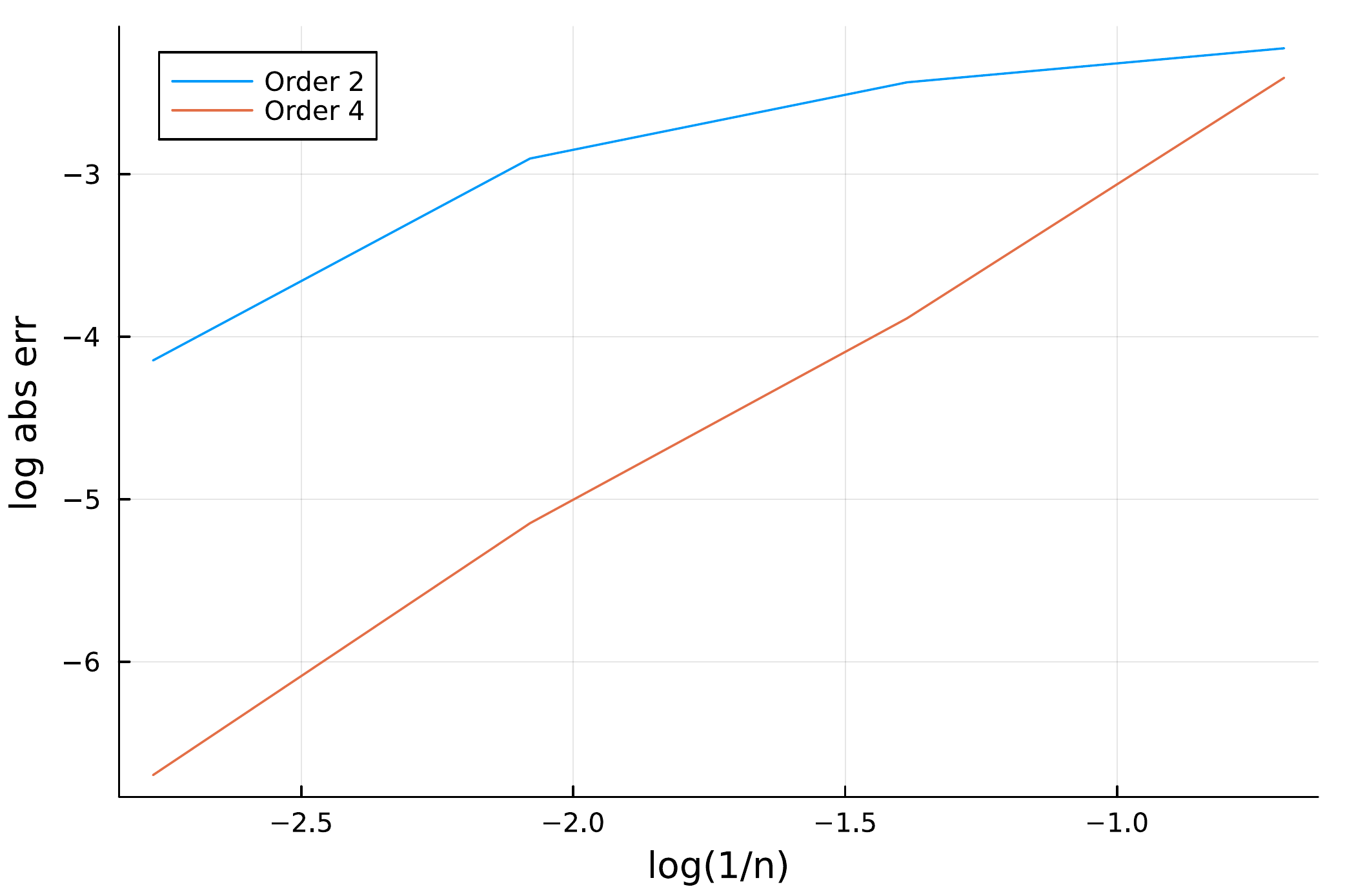}
    \caption{Log-log plot, scheme $\Phi_B$}
    \label{fig:log-log_plot_hestonAA2}
  \end{subfigure}
  \begin{subfigure}[h]{0.49\textwidth}
    \centering
    \includegraphics[width=\textwidth]{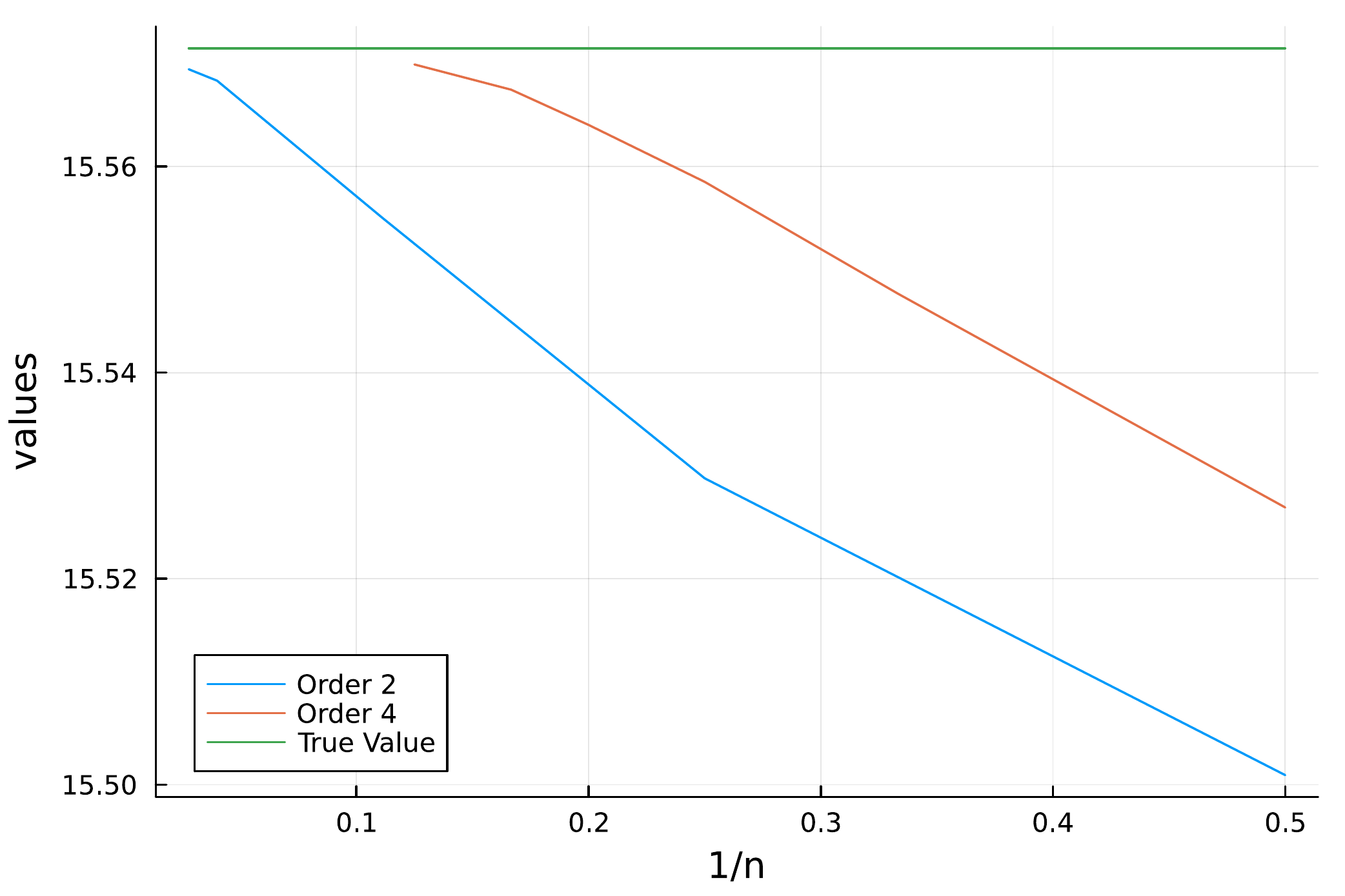}
    \caption{Values plot, scheme $\Phi_E$}
    \label{fig:values_plot_hestonCF1}
  \end{subfigure}
  \hfill
  \begin{subfigure}[h]{0.49\textwidth}
    \centering
    \includegraphics[width=\textwidth]{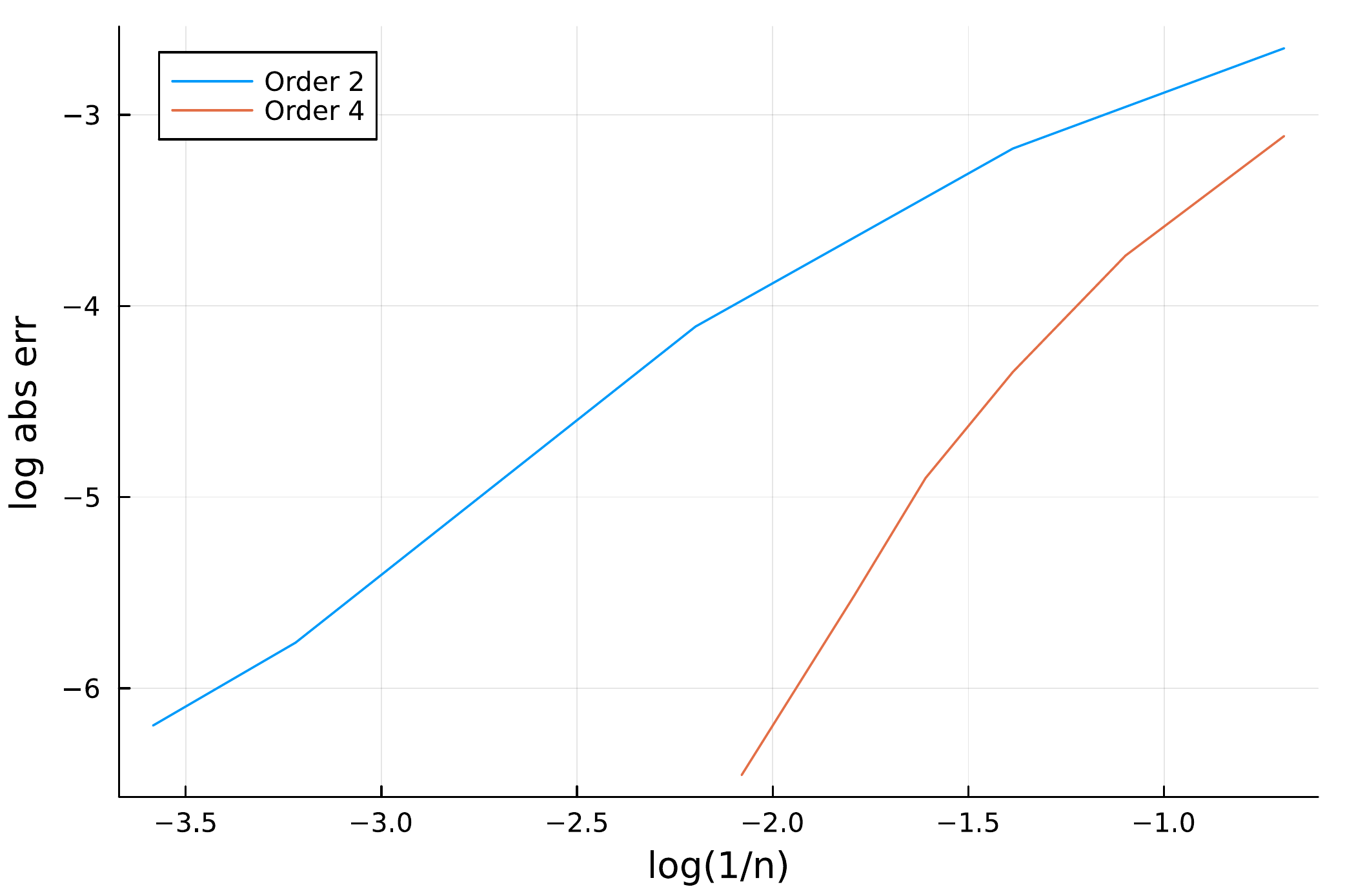}
    \caption{Log-log plot, scheme $\Phi_E$}
    \label{fig:log-log_plot_hestonCF1}
  \end{subfigure}
  \caption{Test function: $f(x,s)=(K-s)^+$. Parameters: $S_0=100$, $r=0$, $x=0.2$, $a=0.2$, $k=1$, $\sigma=1.5$, $\rho=-0.7$, $T=1$, $K=105$ ($\frac{\sigma^2}{2a}=5.625$). Statistical precision $\varepsilon=5$e-4.
  Left graphics show the values of $\hat{P}^{1,n}f$, $\hat{P}^{2,n}f$ as a function of the time step $1/n$  and the exact value. Right graphics draw $\log(|\hat{P}^{i,n}f-P_Tf|)$ in function of $\log(1/n)$: for the scheme $\Phi_B$ (resp. $\Phi_E$) the regressed slopes  are 0.90 (resp. 1.28) and 2.04 (resp. 2.40) for the second and fourth order respectively.}\label{HestonAA_orders2}
\end{figure} 
}

\appendix

\section{Proofs of Section~\ref{Sec_pol_fct}}\label{App_proof_sec_pol}
\begin{proof}[Proof of Lemma~\ref{estimates_pol}.]
  \textit{(1)} Let $f\in \PLRp{L}$.  We have $X_0(t,x)^j=\sum_{i=0}^j {j\choose i}((a-\sigma^2/4)\psi_k(t))^{j-i}e^{-kti}x^i$ and thus
  \begin{align*}
    f(X_0(t,x)) & =\sum_{j=0}^L a_j \sum_{i=0}^j {j\choose i}((a-\sigma^2/4)\psi_k(t))^{j-i}e^{-kti} x^i. 
  \end{align*}
  Therefore, $f(X_0(t,\cdot)) \in \PLRp{L}$ and we have
  $$\|f(X_0(t,\cdot)) \|\le \sum_{j=0}^L |a_j| \sum_{i=0}^j {j\choose i}(|a-\sigma^2/4|\psi_k(t))^{j-i}e^{-kti}   =\sum_{j=0}^L |a_j| \tilde{X}_0(t)^j, $$
  with $ \tilde{X}_0(t)=e^{-kt}+  |a-\sigma^2/4|\psi_k(t)$. For $k\ge 0$, we have $0\leq\psi_k(t)\leq t$ and thus $\tilde{X}_0(t)\le (1+|a-\sigma^2/4|t)$. For $k<0$, we have $\tilde{X}_0(t)=e^{-kt}(1+  |a-\sigma^2/4|\psi_{-k}(t))\le e^{-kt}(1+  |a-\sigma^2/4|t )$. Since $(1+  |a-\sigma^2/4|t )^L\le 1+t\sioj {j\choose i}  |a-\sigma^2/4|^i  (1\vee T)^i\le 1+t(1+  |a-\sigma^2/4|(1\vee T) )^L$, we get $\tilde{X}_0(t)^j\le (1\vee e^{-k Lt})[ 1+t(1+  |a-\sigma^2/4|(1\vee T) )^L]$ for $j\in\{0,\dots,L\}$ and then
  $$\|f(X_0(t,\cdot))\|\leq (1\vee e^{-kLt})(1+(1+|a-\sigma^2/4|(1\vee T))^Lt) \|f \|,$$ which gives the claim with $C_{X_0}=1+|a-\sigma^2/4|(1\vee T)$.\\

  \textit{(2)} Since $Y$ is a symmetric random variable, we have
  \begin{align*}
    \E[f(X_1(\sqrt{t}Y,x ))] & = \sjzL a_j \E[X_1(\sqrt{t}Y,x)^j] = \sjzL a_j \sum_{i=0}^{2j} {2j\choose i}  \bigg(\frac{\sigma \sqrt{t}}{2}\bigg)^{2j-i}\E[Y^{2j-i}]x^{i/2} \\
                             & =\sjzL a_j \sum_{i=0}^j {2j\choose 2i}  \bigg(\frac{\sigma^2 t}{4}\bigg)^{j-i}\E[Y^{2(j-i)}]x^i                                               \\
                             & =\sjzL a_j x^j + t\sjzL a_j \sum_{i=0}^{j-1} {2j\choose 2i}  \bigg(\frac{\sigma^2 }{4}\bigg)^{j-i} t^{j-i-1}\E[Y^{2(j-i)}]x^i.
  \end{align*}
  This proves that $\E[f(X_1(\sqrt{t}Y,\cdot))]\in \PLRp{L}$. We note that $\E[Y^{2j}]\ge 1$ by Hölder inequality since $\E[Y^2]=1$, and thus $\E[Y^{2j}]\le \E[Y^{2L}]$ for $j\in \{0,\dots,L\}$.  We get
  \begin{align*}
    \| f(X_1(\sqrt{t}Y, \cdot ))\| & \le \|f\| +t \E[Y^{2L}]\sjzL |a_j| \sum_{i=0}^{j-1} {2j\choose 2i}  \bigg(\frac{\sigma^2 }{4}\bigg)^{j-i} (1\vee T)^{j-i} \\
                                   & \le \|f\|\left(1+t  \E[Y^{2L}]\left(1+\frac \sigma 2 \sqrt{1\vee T}\right)^{2L}\right),
  \end{align*}
  since $\sum_{i=0}^{j-1} {2j\choose 2i}  \left(\frac{\sigma^2 }{4}\right)^{j-i} (1\vee T)^{j-i}\le \left(1+\frac \sigma 2 \sqrt{1\vee T}\right)^{2j} \le \left(1+\frac \sigma 2 \sqrt{1\vee T}\right)^{2L} $. This gives the claim with $C_{X_1}=\left(1+\frac \sigma 2 \sqrt{1\vee T}\right)^{2}$.
\end{proof}

\begin{proof}[Proof of Lemma~\ref{Moments_Formula_CIR}.]
  We have $\tilde{u}_0(t,x)=1$, and in the case $m=1$, we have $\tilde{u}_1(t,x)=x +\int_0^t(a-k \tilde{u}_1(s,x))ds$ that has the solution:
  \begin{equation*}
    \tilde{u}_1(t,x) = x e^{-kt} + a \psi_k(t)
  \end{equation*}
  where $\psi_k(t)=\frac{1-e^{-kt}}{k}$ if $k\neq 0$ and $\psi_k(t)=t$ otherwise.
  This gives the claim for $m=1$ with $\tilde{u}_{0,1}=a \psi_k(t)$ and  $\tilde{u}_{1,1}=e^{-kt}$. We then prove the result by induction and consider $m\geq 2$. Using It\^o formula and taking the expected value, one has $\partial_t \tilde{u}_m(t,x) = (am+\sigma^2 m(m-1)/2) \tilde{u}_{m-1}(t,x)-km\tilde{u}_m(t,x)$. Hence, we have
  \begin{equation*}
    \tilde{u}_m(t,x) = (e^{-kt})^m\left( x^m + \int_0^t (am+\sigma^2 m(m-1)/2)(e^{ks})^m  \tilde{u}_{m-1}(s,x) ds\right),
  \end{equation*}
  and we get the following induction relations that give us the representation \eqref{Pol_Estimate_CIR}
  \begin{align*}
    \begin{cases}\tilde{u}_{j,m}(t) = (e^{-kt})^m \int_0^t (am+\sigma^2 m(m-1)/2)(e^{ks})^m  \tilde{u}_{j,m-1}(s) ds, \  0\le j\le m-1, \\
      \tilde{u}_{m,m}(t) = (e^{-kt})^m.
    \end{cases}
  \end{align*}

  Let $f\in \PLRp{L}$. We clearly get from the preceding result that $\E[f(X^\cdot_t)]\in \PLRp{L}$ and
  $$\|\E[f(X^\cdot_t)]\|\le \sum_{m=0}^L |a_m|\sum_{j=0}^m|\tilde{u}_{j,m}(t)| \le C_{\text{cir}}(L,T) \|f\|. \qedhere $$
\end{proof}

\section{Proofs of Section~\ref{Sec_main}}\label{App_proof_sec_5}

\begin{proof}[Proof of Lemma~\ref{lem_estimnorm}]
    Properties (1)--(3) are straightforward, and we prove only (4)--(6).
    \item[$(4)$] We use the fact that $1+x^L\leq 2(1+x^{L+1})$ for $x\geq 0$ , hence
    $$ \max_{j\in\{0,\ldots, m\}} \sup_{x\geq 0}\frac{|f^{(j)}(x)|}{1+x^{L+1}}\leq 2 \max_{j\in\{0,\ldots, m\}} \sup_{x\geq 0}\frac{|f^{(j)}(x)|}{1+x^L}.$$
    \item[$(5)$] Let $f\in \CpolKL{m}{L}$. We will use the fact that for all $ x\geq 0$, $(1+x)(1+x^L)\leq 3(1+x^{L+1})$ so
    $$\sup_{x\geq 0}\frac{x|f^{(j)}(x)|}{1+x^{L+1}} \leq 3\sup_{x\geq 0}\frac{x}{1+x}\sup_{x\geq 0}\frac{|f^{(j)}(x)|}{1+x^L}= 3\sup_{x\geq 0}\frac{|f^{(j)}(x)|}{1+x^L}.$$  Now, we use the Leibniz rule on $\mathcal{M}_1 f$ and get $(xf(x))^{(j)}=jf^{(j-1)}(x)+xf^{(j)}(x)$, so
    $$\sup_{x\geq 0}\frac{|(xf(x))^{(j)}|}{1+x^{L+1}}\leq j\sup_{x\geq 0}\frac{|f^{(j-1)}(x)|}{1+x^{L+1}} +  \sup_{x\geq 0}\frac{x|f^{(j)}(x)|}{1+x^{L+1}}.$$
    Maximizing both sides on $j\in\{0,\ldots, m\}$ and using the previous inequality gives $\|\mathcal{M}_1 f\|_{m,L+1}\le  m\| f\|_{m-1,L+1} + 3\|f\|_{m,L}$. We get the bound by using properties $(2)$ and $(4)$.

    \item[$(6)$] We have $\|\cL f\|_{m,L+1}\le a \| f' \|_{m,L+1} + (2m+3)[|k|\| f' \|_{m,L}+\frac {\sigma^2} 2 \|f''\|_{m,L}]$ by using the property~(5). We get the estimate by using~(3),~(4) and~(2). The other estimate for $V_1^2/2$ is obtained by taking $a=\sigma^2/4$ and $k=0$, while the one for $V_0$ follows by using the same arguments.
\end{proof}

\begin{proof}[Proof of Lemma~\ref{regular_rep}]
  For $x>0$, we have
  \begin{align*}
    \psi'_g(x) & =\left(1+\frac{\beta}{2\sqrt{x}}\right) g'(x+\beta \sqrt{x}+\gamma)+ \left(1-\frac{\beta}{2\sqrt{x}}\right) g'(x-\beta \sqrt{x}+\gamma)             \\
               & =\underset{\psi_{g'}(x)}{\underbrace{g'(x+\beta \sqrt{x}+\gamma)+g'(x-\beta \sqrt{x}+\gamma)}}+\beta^2 \int_0^1g''(x+\beta(2u-1)\sqrt{x}+\gamma)du,
  \end{align*}
  since $\frac{d}{du}g'(x+\beta(2u-1)\sqrt{x}+\gamma)=2\beta\sqrt{x} g''(x+\beta(2u-1)\sqrt{x}+\gamma)$.
  Clearly, this derivative is continuous at~$0$ which shows that $\psi_g$ is $C^1$.

  We are now in position to prove~\eqref{formula_psign} by induction on~$n$. It is true for $n=0,1$.  We assume that it is true for~$n$. Then, we get by using the case $n=1$, differentiating~\eqref{formula_psign} and an integration by parts for the fourth term:
  \begin{align*}
    \psi_g^{(n+1)}(x)= & \, \psi_{g^{(n+1)}}(x)+\beta^2 \int_0^1g^{(n+2)}(x+\beta(2u-1)\sqrt{x}+\gamma)du                                                                   \\
                       & +     \sum_{j=1}^n\binom{n}{j}\beta^{2j} \left(\int_0^1g^{(n+j+1)}(x+\beta(2u-1)\sqrt{x}+\gamma) \frac{(u-u^2)^{j-1}}{(j-1)!}du \right.            \\
                       & \phantom{+   \sum_{k=1}^n\binom{n}{j}\beta^{2j}} \left.+\beta^2\int_0^1g^{(n+j+2)}(x+\beta(2u-1)\sqrt{x}+\gamma) \frac{(u-u^2)^{j}}{j!}du \right).
  \end{align*}
  We then reorganize the terms as follows
  \begin{align*}
    \psi_g^{(n+1)}(x)   = & \, \psi_{g^{(n+1)}}(x)+(n+1)\beta^2 \int_0^1g^{(n+2)}(x+\beta(2u-1)\sqrt{x}+\gamma)du                                               \\
                          & +\beta^{2n+2}\int_0^1g^{(2n+2)}(x+\beta(2u-1)\sqrt{x}+\gamma) \frac{(u-u^2)^{n}}{n!}du                                              \\
                          & + \sum_{j=2}^n\binom{n}{j}\beta^{2j} \left(\int_0^1g^{(n+j+1)}(x+\beta(2u-1)\sqrt{x}+\gamma) \frac{(u-u^2)^{j-1}}{(j-1)!}du \right) \\
                          & +\sum_{j=1}^{n-1}\binom{n}{j}\beta^{2j+2} \left(\int_0^1g^{(n+j+2)}(x+\beta(2u-1)\sqrt{x}+\gamma) \frac{(u-u^2)^{j}}{j!}du \right).
  \end{align*}
  The last sum is equal to $\sum_{j=2}^{n}\binom{n}{j-1}\beta^{2j} \left(\int_0^1g^{(n+j+1)}(x+\beta(2u-1)\sqrt{x}+\gamma) \frac{(u-u^2)^{j-1}}{(j-1)!}du \right)$ by changing $j$ to $j-1$, and we conclude by using that $\binom{n}{j}+\binom{n}{j-1}=\binom{n+1}{j}$.
\end{proof}

\begin{proof}[Proof of Corollary~\ref{cor_psign}]
  We use~\eqref{formula_psign} with $\gamma=\beta^2/4$. We first notice that
  \begin{align*}
    |\psi_{g^{(n)}}(x)| & \le \|g\|_{n,L}(2+(\sqrt{x}+\beta/2)^{2L}+(\sqrt{x}-\beta/2)^{2L})                      \\
                        & =\|g\|_{n,L}\left(2+2x^L +2 \sum_{i=1}^{L}\binom{2L}{2i}(\beta/2)^{2i}  x^{L-i}\right).
  \end{align*}
  Using that $x^{i}\le 1+x^L$ for $0\le i\le L-1$, we get
  $$ |\psi_{g^{(n)}}(x)|\le  2 \|g\|_{n,L}(1+x^L)  \sum_{i=0}^{L}\binom{2L}{2i}(\beta/2)^{2i}=\|g\|_{n,L}(1+x^L)\big((1+\beta/2)^{2L}+(1-\beta/2)^{2L}\big).$$
  For the other terms, we use that for $u\in[0,1]$, $x\ge 0$ and $j\in\{1,\dots,n\}$,
  \begin{align*}
    |g^{(n+j)}(x+\beta(2u-1)\sqrt{x}+\beta^2/4)| & \le \|g\|_{2n,L}(1+(x+\beta(2u-1)\sqrt{x}+\beta^2/4)^L) \\
                                                 & \le \|g\|_{2n,L}(1+( \sqrt{x}+\beta/2)^{2L}).
  \end{align*}
  We again expand $( \sqrt{x}+\beta/2)^L=x^L+ \sum_{i=1}^{2L}\binom{2L}{i}(\beta/2)^{i}x^{(L-i)/2}$ and use that $x^{(L-i)/2}\le 1+x^L$ to get
  $$|g^{(n+j)}(x+\beta(2u-1)\sqrt{x}+\beta^2/4)|\le \|g\|_{2n,L}(1+\beta/2)^{2L}(1+x^L).$$
  Besides, we have $u-u^2\le 1/4$ for $u\in[0,1]$ and thus $\int_0^1(u-u^2)^jdu\leq \frac{1}{4^{j}}$, which gives $\sup_{x\ge 0}\frac{|\psi_{g^{(n)}}(x)|}{1+x^L}\le \tilde{C}(\beta)$ with
  \begin{align*}
    \tilde{C}(\beta) & =\|g\|_{n,L}\big((1+\beta/2)^{2L}+(1-\beta/2)^{2L}\big)+\|g\|_{2n,L}(1+\beta/2)^{2L}\sum_{j=1}^n \binom{n}{j}\bigg(\frac{\beta^2}{4}\bigg)^j \\
                     & =\|g\|_{n,L}\big((1+\beta/2)^{2L}+(1-\beta/2)^{2L}\big)+\|g\|_{2n,L}(1+\beta/2)^{2L}(1+\beta^2/4)^{n}                                        \\
                     & \leq\|g\|_{2n,L}\big((1+\beta/2)^{2L}+(1-\beta/2)^{2L}+(1+\beta/2)^{2L}(1+\beta^2/4)^{m}\big)=C_{\beta,m,L}\|g\|_{2n,L},
  \end{align*}
  which gives the claim.
\end{proof}

\section{Assumption~\eqref{H1_bar} for symmetric random variables }

\begin{theorem}
  Let $\eta:\R\to \R_+$ be a $C^\infty$ even function. Then, $\eta^*_m\geq 0$ for all $m\in\N^*$ if and only if $\eta(\sqrt{\cdot})$ is the Laplace transform of a finite positive Borel measure $\mu$ on $[0,\infty)$, i.e. $\eta(\sqrt{x})=\int_0^{\infty}e^{-tx}\mu(dt)$ for all $x\in\R_+$.
\end{theorem}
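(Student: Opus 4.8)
The plan is to reduce the positivity conditions $\eta^*_m\ge 0$ to the complete monotonicity of the single function $g(x):=\eta(\sqrt{x})$, $x>0$, and then to invoke the Hausdorff--Bernstein--Widder theorem. The crux is the explicit identity
\[
\eta^*_m(y)=\frac{2^{2m-1}}{(m-1)!}\,y^{2m}\,(-1)^m g^{(m)}(y^2),\qquad y\in\R,\ m\ge 1,
\]
which rewrites the differential polynomial $\eta^*_m$ in terms of a single derivative of $g$ evaluated at $y^2$. Granting this, since $\frac{2^{2m-1}}{(m-1)!}y^{2m}\ge 0$, the condition $\eta^*_m\ge 0$ for all $m\ge 1$ is equivalent to $(-1)^m g^{(m)}(x)\ge 0$ for all $x>0$ and all $m\ge1$.

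First I would establish the identity. Introducing the operator $L\phi(y):=\phi'(y)/(2y)$, the chain rule gives $L^m\eta(y)=g^{(m)}(y^2)$ for $y>0$ by induction (indeed $g'(y^2)=\eta'(y)/(2y)$, and one application of $L$ advances the order of the derivative of $g$). Set $T_m(y):=y^{2m}L^m\eta(y)$ and $S_m(y):=\sum_{j=1}^m c_{j,m}\,y^j\eta^{(j)}(y)$, so that $\eta^*_m=(-1)^{m-1}S_m$. Writing $L^m\eta=y^{-2m}T_m$ and differentiating yields the recursion $T_{m+1}=\tfrac12\big(y\tfrac{d}{dy}-2m\big)T_m$. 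On the other hand, using $\tfrac{d}{dy}\big(y^j\eta^{(j)}\big)=jy^{j-1}\eta^{(j)}+y^j\eta^{(j+1)}$ and reindexing, one gets $\big(y\tfrac{d}{dy}-2m\big)S_m=\sum_{j}\big[(j-2m)c_{j,m}\mathds{1}_{j\le m}+c_{j-1,m}\mathds{1}_{j\ge 2}\big]y^j\eta^{(j)}$, and the recursion~\eqref{recursive_coeff_formula} for the $c_{j,m}$ is precisely what makes this equal to $\tfrac m2\,S_{m+1}$. Comparing the two recursions and the base case $m=1$ (where $S_1=-y\eta'$ and $T_1=y\eta'/2$), an induction gives $S_m=-\frac{2^{2m-1}}{(m-1)!}T_m$, which is the claimed identity. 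It is proved for $y>0$; by the parity of $\eta$ both sides are even functions of $y$, and both vanish at $y=0$, so it holds on all of $\R$.

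Finally I would conclude by complete monotonicity. Since $\eta\ge 0$ we have $g\ge 0$ on $(0,\infty)$, and $g\in\mathcal{C}^\infty((0,\infty))$ because $\eta\in\mathcal{C}^\infty$. Thus the assumption that $\eta^*_m\ge0$ for all $m\ge1$, together with $g\ge0$, says exactly that $(-1)^m g^{(m)}\ge0$ on $(0,\infty)$ for all $m\ge0$, i.e. that $g$ is completely monotone. By the Hausdorff--Bernstein--Widder theorem there is a positive Borel measure $\mu$ on $[0,\infty)$ with $g(x)=\int_0^\infty e^{-tx}\mu(dt)$ for $x>0$. Letting $x\downarrow0$, monotone convergence gives $\mu([0,\infty))=g(0^+)=\eta(0)<\infty$, so $\mu$ is finite and the representation extends to $x=0$, yielding $\eta(\sqrt{x})=\int_0^\infty e^{-tx}\mu(dt)$ for all $x\ge0$. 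The converse is immediate: if $g$ is such a Laplace transform, differentiation under the integral gives $(-1)^m g^{(m)}(x)=\int_0^\infty t^m e^{-tx}\mu(dt)\ge0$, whence $\eta^*_m\ge0$ for every $m\ge1$ by the identity.

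The main obstacle is proving the identity cleanly, namely the bookkeeping that matches the two-term recursion~\eqref{recursive_coeff_formula} for $c_{j,m}$ with the single differential recursion satisfied by $T_m$. Once the operator $L$ and the quantities $T_m,S_m$ are set up, this is a finite induction; the remaining endpoint and finiteness issues for $\mu$, together with the appeal to Bernstein's theorem, are standard.
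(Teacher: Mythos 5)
Your proof is correct. The forward implication is essentially the paper's own argument: you establish the same key identity $\eta^*_m(y)=\frac{2^{2m-1}}{(m-1)!}\,y^{2m}\,(-1)^m g^{(m)}(y^2)$ with $g(x)=\eta(\sqrt{x})$ (the paper proves it by differentiating the representation of $\partial_x^{m-1}[\eta(\sqrt{x})]$ and matching coefficients against the recursion~\eqref{recursive_coeff_formula}; your bookkeeping via the operator $L\phi=\phi'/(2y)$ and the pair $T_m,S_m$ is the same induction in a tidier packaging), and both proofs then invoke Bernstein--Widder, with the same monotone-convergence argument for the finiteness of $\mu$. Where you genuinely depart from the paper is the converse. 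The paper reduces to the Gaussian case: it uses Lemma~\ref{gaussian_eta_positivity} to get $(\eta_t)^*_m(x)=(\eta_g)^*_m(\sqrt{2t}\,x)\ge 0$ for $\eta_t(x)=e^{-tx^2}$ by scaling, and then integrates in $t$ against $\mu$ using the linearity of $\eta\mapsto\eta^*_m$. You instead reuse the identity in the reverse direction: differentiating the Laplace transform under the integral gives $(-1)^mg^{(m)}(x)=\int_0^\infty t^me^{-tx}\mu(dt)\ge0$, hence $\eta^*_m\ge0$. Your route is more self-contained and symmetric, with the identity doing all the work in both directions; the paper's route recycles a Gaussian computation it needs anyway for Proposition~\ref{prop_H2_NV} and highlights that the admissible densities are exactly the Gaussian scale mixtures. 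The only point worth a line in a careful write-up is the routine justification of differentiating under the integral sign (domination of $t^me^{-tx}$ locally uniformly in $x>0$, using that $\mu$ is finite), and the trivial observation that $\eta^*_m(0)=0$ so positivity at $y=0$ is automatic.
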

\begin{proof}
  We start to prove that $\eta^*_m\geq 0$ for all $m\in\N^*$ implies $\eta(\sqrt{x})=\int_0^{\infty}e^{-tx}\mu(dt)$ for all $x\in\R$. To prove this, we use Bernstein's Theorem for completely monotone functions (see e.g. \cite[Theorem 12a p.~160]{Widder}) and show that  for all $m\in \N$ and $x\in\R_+^*$, $(-1)^m \partial_x^m [\eta(\sqrt{x})]\geq 0$. 
  To do so, we prove by induction on~$m$ the representation $$\partial^m_x[\eta(\sqrt{x})]=-\frac{(m-1)!}{2^{2m-1}}x^{-m}\sum_{j=1}^m c_{j,m}x^{\frac{j}{2}}\eta^{(j)}(\sqrt{x})=(-1)^m\frac{(m-1)!}{2^{2m-1}}x^{-m}\eta^*_m(\sqrt{x}).$$ For $m=1$, we have $\eta^*_1(\sqrt{x})=c_{1,1} \sqrt{x} \eta'(\sqrt{x})$ and the representation holds from $\partial_x[\eta(\sqrt{x})]=\frac{1}{2\sqrt{x}}\eta'(\sqrt{x})=-\frac{1}{2x}\eta^*_1(\sqrt{x})$ using that $c_{1,1}=-1$. Now, let $m\geq 2$ and suppose the representation is true for $m-1$, so
  \begin{equation*}
    \partial_x^{m}[\eta(\sqrt{x})]=\partial_x(\partial_x^{m-1}[\eta(\sqrt{x})])=\partial_x\Bigg(-\frac{(m-2)!}{2^{2m-3}}x^{-(m-1)}\sum_{j=1}^{m-1}c_{j,m-1}x^{\frac{j}{2}}\eta^{(j)}(\sqrt{x}) \Bigg).
  \end{equation*}
  Differentiating and using that $\partial_x \big(x^{\frac{j}{2}}\eta^{(j)}(\sqrt{x})\big)= \frac{1}{2x}\big(j x^\frac{j}{2}\eta^{(j)}(\sqrt{x})+x^{\frac{j+1}{2}}\eta^{(j+1)}(\sqrt{x})\big)$, we get
  \begin{align*}
    \partial_x^{m}[\eta(\sqrt{x})] & =
    \begin{multlined}[t] -\frac{(m-2)!}{2^{2m-3}}\Bigg(-\frac{m-1}{x^{m}} \sum_{j=1}^{m-1}c_{j,m-1}		x^{\frac{j}{2}}\eta^{(j)}(\sqrt{x}) \\
      +\frac{1}{2x^{m}}\sum_{j=1}^{m-1} c_{j,m-1}\bigg(j x^\frac{j}{2}	\eta^{(j)}(\sqrt{x})+x^{\frac{j+1}{2}}\eta^{(j+1)}(\sqrt{x})\bigg) \Bigg)\end{multlined}                                 \\
                                 & =\begin{multlined}[t] -\frac{(m-2)!}{2^{2m-3}}x^{-m}\Bigg( \Big(\frac{1}{2}- m-1\Big)  c_{1,m-1} x^{\frac{1}{2}}\eta^{(1)}(\sqrt{x})\\
      +\sum_{j=1}^{m-1}\bigg(\Big(\frac{j}{2}- m+1\Big)  c_{j,m-1} +\frac{1}{2} 	c_{j-1,m-1}\Big)	x^{\frac{j}{2}}\eta^{(j)}(\sqrt{x}) \\
      +\frac{1}{2}c_{m-1,m-1}x^\frac{m}{2}	\eta^{(m)}(\sqrt{x})\Bigg)\end{multlined} \\
                                 & =\begin{multlined}[t] -\frac{(m-1)!}{2^{2m-1}}x^{-m}\Bigg( \Big(\frac{2}{m-1}- 4\Big)  c_{1,m-1} x^{\frac{1}{2}}\eta^{(1)}(\sqrt{x})\\
      +\sum_{j=1}^{m-1}\bigg(\Big(\frac{2j}{m-1}- 4\Big)  c_{j,m-1} +\frac{2}{m-1} 	c_{j-1,m-1}\Big)	x^{\frac{j}{2}}\eta^{(j)}(\sqrt{x}) \\
      +\frac{2}{m-1}c_{m-1,m-1}x^\frac{m}{2}	\eta^{(m)}(\sqrt{x})\Bigg)\end{multlined}
  \end{align*}
  and we conclude using the recursion formula \eqref{recursive_coeff_formula} for $c_{j,m}$.

  We now assume that $\eta(\sqrt{x})=\int_0^\infty e^{-tx} \mu(dt)$ and show that $\eta^*_m\ge 0$ for all $m\ge 1$. We define $\eta_g(x)=e^{-\frac{x^2}{2}}$ and consider for all $t>0$ the function $\eta_t(x)=e^{-tx^2}$. We remark that for all $t>0$, $\eta_t(x)=\eta_g(h_t(x))$ with $h_t(x)=\sqrt{2t}x$ and so we can write by Lemma~\ref{gaussian_eta_positivity}
  $$(\eta_t)^*_m(x)=(-1)^{m-1}\sum_{j=1}^m c_{j,m}x^j\eta_t^{(j)}(x)=(-1)^{m-1}\sum_{j=1}^m c_{j,m}(\sqrt{2t}x)^j\eta_g^{(j)}(\sqrt{2t}x)=(\eta_g)^*_m(\sqrt{2t}x).
  $$
  Therefore, $(\eta_t)^*_m(x)\geq 0$ for all $t>0$ and $x\in\R$.
  We now consider an even function $\eta:\R \to \R_+$ such that $\eta(\sqrt{x})=\int_0^{\infty}e^{-tx}\mu(dt)$ for some Borel measure~$\mu$ on $[0,\infty)$. We then have for all $x\in\R$, $\eta(x)=\int_0^{\infty} e^{-tx^2}\mu(dt)=\int_0^{\infty}\eta_t(x) \mu(dt)$ and thus $\eta^{(j)}(x)=\int_0^{\infty}\eta^{(j)}_t(x) \mu(dt)$. This gives, for all $m\in\N^*$,
  \begin{align*}
    \eta^*_m(x) = (-1)^{m-1}\sum_{j=1}^m c_{j,m}x^j\eta^{(j)}(x) & =\int_0^\infty(-1)^{m-1}\sum_{j=1}^m c_{j,m}x^j\eta_t^{(j)}(x)\mu(dt) \\
                                                                 & =\int_0^\infty(\eta_t)^*_m(x)\mu(dt)\geq 0
  \end{align*}
  where the last integral is positive for all $x\in\R$ because is an integral of a positive function against a positive measure.
\end{proof}

\begin{corollary}\label{cor_density_etam}
  All the densities that satisfy the hypothesis of the representation Lemma \ref{regular_density} for all $m\in\N^*$ are such that $\eta(\sqrt{\cdot})$ is the Laplace transform of a finite positive Borel measure $\mu$ over $[0,\infty)$.
\end{corollary}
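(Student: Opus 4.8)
The plan is to read this corollary as a direct consequence of the theorem just proved, which characterizes the sign condition $\eta^*_m\ge 0$ for all $m\in\N^*$ through a Laplace transform representation of $\eta(\sqrt{\cdot})$. All the analytic work has already been carried out in that theorem, so the task reduces to checking that the hypotheses imposed in Lemma~\ref{regular_density} for every $m\in\N^*$ are exactly the hypotheses of the theorem.

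First I would unpack the assumptions. A density $\eta$ satisfying the hypotheses of Lemma~\ref{regular_density} for all $m\in\N^*$ is, in particular, the density of a \emph{symmetric} random variable, hence an even function; being a continuous (indeed $\mathcal{C}^\infty$) density, it is nonnegative everywhere, so $\eta:\R\to\R_+$. The requirement $\eta\in\mathcal{C}^M(\R)$ for all $M\in\N$ gives $\eta\in\mathcal{C}^\infty(\R)$. Finally, the sign condition $\eta^*_m(y)\ge 0$ for all $y\in\R$ is demanded for every $m\in\{1,\dots,M\}$ and every $M$, i.e. for all $m\in\N^*$.

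Then I would invoke the forward (``only if'') implication of the theorem: since $\eta:\R\to\R_+$ is an even $\mathcal{C}^\infty$ function with $\eta^*_m\ge 0$ for all $m\in\N^*$, the theorem produces a positive Borel measure $\mu$ on $[0,\infty)$ with $\eta(\sqrt{x})=\int_0^\infty e^{-tx}\mu(dt)$ for $x\ge 0$. Finiteness of $\mu$ then follows by evaluating at $x=0$, which gives $\mu([0,\infty))=\eta(0)<\infty$ since $\eta$ is a continuous probability density.

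There is essentially no obstacle here, as the corollary is a reformulation of the theorem specialized to the setting of Lemma~\ref{regular_density}; the only point requiring a line of care is matching the universal quantifier ``for all $m\in\N^*$'' in the corollary with the sign hypothesis $\eta^*_m\ge0$ used in the theorem, together with the automatic positivity, evenness and smoothness of a symmetric $\mathcal{C}^\infty$ density.
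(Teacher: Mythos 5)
Your proposal is correct and matches the paper's intent exactly: the corollary is stated immediately after the theorem in Appendix~C precisely as its ``only if'' direction applied to a symmetric $\mathcal{C}^\infty$ density, and the paper gives no separate proof beyond this. Your additional observation that finiteness of $\mu$ follows from evaluating $\eta(\sqrt{x})=\int_0^\infty e^{-tx}\mu(dt)$ at $x=0$ is a correct (and welcome) line of care.
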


\bibliographystyle{abbrv}
\bibliography{Biblio_Ordre4}

\begin{thebibliography}{10}

\bibitem{AA_MCMA}
A.~Alfonsi.
\newblock On the discretization schemes for the {CIR} (and {B}essel squared)
  processes.
\newblock {\em Monte Carlo Methods Appl.}, 11(4):355--384, 2005.

\bibitem{AA_MCOM}
A.~Alfonsi.
\newblock High order discretization schemes for the {CIR} process: application
  to affine term structure and {H}eston models.
\newblock {\em Math. Comp.}, 79(269):209--237, 2010.

\bibitem{AA_book}
A.~Alfonsi.
\newblock {\em Affine diffusions and related processes: simulation, theory and
  applications}, volume~6 of {\em Bocconi \& Springer Series}.
\newblock Springer, Cham; Bocconi University Press, Milan, 2015.

\bibitem{AB}
A.~Alfonsi and V.~Bally.
\newblock A generic construction for high order approximation schemes of
  semigroups using random grids.
\newblock {\em Numer. Math.}, 148(4):743--793, 2021.

\bibitem{AlNe}
M.~Altmayer and A.~Neuenkirch.
\newblock Discretising the {H}eston model: an analysis of the weak convergence
  rate.
\newblock {\em IMA J. Numer. Anal.}, 37(4):1930--1960, 2017.

\bibitem{BoDi}
M.~Bossy and A.~Diop.
\newblock {An efficient discretisation scheme for one dimensional SDEs with a
  diffusion coefficient function of the form $|x|^a$, a in $[1/2,1)$}.
\newblock (RR-5396):44, 2007.
\newblock Version 2.

\bibitem{BrCaTe}
M.~Briani, L.~Caramellino, and G.~Terenzi.
\newblock Convergence rate of {M}arkov chains and hybrid numerical schemes to
  jump-diffusion with application to the {B}ates model.
\newblock {\em SIAM J. Numer. Anal.}, 59(1):477--502, 2021.

\bibitem{CIR}
J.~C. Cox, J.~E. Ingersoll, Jr., and S.~A. Ross.
\newblock A theory of the term structure of interest rates.
\newblock {\em Econometrica}, 53(2):385--407, 1985.

\bibitem{Heston}
S.~L. Heston.
\newblock A closed-form solution for options with stochastic volatility with
  applications to bond and currency options.
\newblock {\em Rev. Financ. Stud.}, 6(2):327--343, 1993.

\bibitem{Kusuoka}
S.~Kusuoka.
\newblock Approximation of expectation of diffusion process and mathematical
  finance.
\newblock In {\em Taniguchi {C}onference on {M}athematics {N}ara '98},
  volume~31 of {\em Adv. Stud. Pure Math.}, pages 147--165. Math. Soc. Japan,
  Tokyo, 2001.

\bibitem{NiNi}
M.~Ninomiya and S.~Ninomiya.
\newblock A new higher-order weak approximation scheme for stochastic
  differential equations and the {R}unge-{K}utta method.
\newblock {\em Finance Stoch.}, 13(3):415--443, 2009.

\bibitem{NV}
S.~Ninomiya and N.~Victoir.
\newblock Weak approximation of stochastic differential equations and
  application to derivative pricing.
\newblock {\em Appl. Math. Finance}, 15(1-2):107--121, 2008.

\bibitem{OTV}
K.~Oshima, J.~Teichmann, and D.~Velu\v{s}\v{c}ek.
\newblock A new extrapolation method for weak approximation schemes with
  applications.
\newblock {\em Ann. Appl. Probab.}, 22(3):1008--1045, 2012.

\bibitem{Pages}
G.~Pag\`es.
\newblock Multi-step {R}ichardson-{R}omberg extrapolation: remarks on variance
  control and complexity.
\newblock {\em Monte Carlo Methods Appl.}, 13(1):37--70, 2007.

\bibitem{TT}
D.~Talay and L.~Tubaro.
\newblock Expansion of the global error for numerical schemes solving
  stochastic differential equations.
\newblock {\em Stochastic Anal. Appl.}, 8(4):483--509 (1991), 1990.

\bibitem{Widder}
D.~V. Widder.
\newblock {\em The {L}aplace {T}ransform}.
\newblock Princeton Mathematical Series, vol. 6. Princeton University Press,
  Princeton, N. J., 1941.

\end{thebibliography}

\end{document}